\newcommand*{\mailto}[1]{\href{mailto:#1}{\nolinkurl{#1}}}
\definecolor{darkgreen}{rgb}{0.5,0.25,0}
\definecolor{darkblue}{rgb}{0,0,1}
\definecolor{answerblue}{rgb}{0,0,0.75}
\newcommand{\ep}{\varepsilon}
\newcommand{\pd}{\partial}
\newcommand{\LL}{\langle}
\newcommand{\RR}{\rangle}
\newcommand{\Ex}{\mathbb{E}}
\renewcommand{\d}{\mathrm{d}}
\newcommand{\supp}{\mathrm{supp}}
\newcommand{\loc}{\mathrm{loc}}
\newsavebox{\@brx}
\newcommand{\LLL}[1][]{\savebox{\@brx}{\(\m@th{#1\langle}\)}%
  \mathopen{\copy\@brx\kern-0.5\wd\@brx\usebox{\@brx}}}
\newcommand{\RRR}[1][]{\savebox{\@brx}{\(\m@th{#1\rangle}\)}%
  \mathclose{\copy\@brx\kern-0.5\wd\@brx\usebox{\@brx}}}
\newcommand{\weak}{\rightharpoonup}
\newcommand{\Div}{\operatorname{div}}
\newcommand{\R}{\mathbb{R}}
\newcommand{\T}{\mathbb{T}}
\newcommand{\N}{\mathbb{N}}
\newcommand{\U}{\mathfrak{U}}
\newcommand{\abs}[1]{\left | #1 \right |}
\newcommand{\norm}[1]{\left\| #1 \right\|}
\newcommand{\bk}[1]{ \left(  #1 \right)}
\newcommand{\one}[1]{\mathds{1}_{#1}}
\newcommand{\todelta}{\xrightarrow{\delta\downarrow 0}}
\newcommand{\torho}{\xrightarrow{\rho\downarrow 0}}
\newcommand{\ton}{\xrightarrow{n\uparrow \infty}}
\newcommand{\tonweak}{\xrightharpoonup{n\uparrow \infty}}
\newcommand{\LLl}[1][]{\savebox{\@brx}{\(\m@th{#1\langle}\)}
  \mathopen{\copy\@brx\mkern2mu\kern-0.9\wd\@brx\usebox{\@brx}}}
\newcommand{\RRr}[1][]{\savebox{\@brx}{\(\m@th{#1\rangle}\)}
  \mathclose{\copy\@brx\mkern2mu\kern-0.9\wd\@brx\usebox{\@brx}}}
\theoremstyle{theorem}
\newtheorem{theorem}{Theorem}[section]
\newtheorem{lem}[theorem]{Lemma}
\newtheorem{corollary}[theorem]{Corollary}
\newtheorem{remark}[theorem]{Remark}
\numberwithin{equation}{section}
\title[convergence of stochastic integrals]
{Convergence of stochastic integrals with 
	applications to transport equations 
	and conservation laws with noise}
\author[Karlsen]{Kenneth H. Karlsen}
\address[Kenneth H. Karlsen]{Department of Mathematics\\
  University of Oslo\\
  NO-0316 Oslo\\ Norway}
\email{\mailto{kennethk@math.uio.no}}
\author[Pang]{Peter H.C. Pang}
\address[Peter H.C. Pang]{Department of Mathematics\\
  University of Oslo\\
  NO-0316 Oslo\\ Norway}
\email{\mailto{ptr@math.uio.no}}
\subjclass[2020]{Primary: 60H15, 60G46; Secondary: 60F25}
\keywords{SPDE, existence, stability, weak solution, weak limit of It\^o integrals}
\date{\today}
\begin{document}

\begin{abstract}
Convergence of stochastic integrals driven by 
Wiener processes $W_n$, with $W_n \to W$ almost surely in $C_t$, 
is crucial in analyzing SPDEs. Our focus is on the convergence of the 
form $\int_0^T V_n\, \mathrm{d} W_n \to \int_0^T V\, \mathrm{d} W$, 
where $\{V_n\}$ is bounded in $L^p(\Omega \times [0,T];X)$ for 
a Banach space $X$ and some finite $p > 2$. This is challenging 
when $V_n$ converges to $V$ \textit{weakly} in the temporal variable. 
We supply convergence results to handle stochastic integral 
limits when strong temporal convergence is lacking. 
A key tool is a uniform mean $L^1$ time translation estimate on $V_n$, 
an estimate that is easily verified in many SPDEs.  
However, this estimate alone does not guarantee strong  
compactness of $(\omega,t)\mapsto V_n(\omega,t)$. 
Our findings, especially pertinent to equations 
exhibiting singular behavior, are substantiated by establishing 
several stability results for stochastic transport equations 
and conservation laws.
\end{abstract}

\maketitle

\setcounter{tocdepth}{1}

\tableofcontents

\section{Introduction}
\label{sec:intro}

We revisit the classical problem of proving 
convergence of stochastic 
integrals. Let $W_n$ and $W$ be Wiener 
processes, and $V_n$ and $V$ be predictable 
processes taking values in a Banach space, 
defined in respective filtrations for each $n$ 
and at the limit. Suppose $V_n \to V$ and 
$W_n \to W$ in some topologies. We shall 
consider the question of the convergence 
\begin{equation}\label{eq:ito_convergence1}
	\int_0^T V_n \,\d W_n 
	\overset{n\uparrow\infty}{\longrightarrow} 
	\int_0^T V\,\d W.
\end{equation}
The convergence of these integrals is essential 
in the stochastic compactness method, which 
is a fundamental component of many existence 
results for stochastic partial differential equations 
(SPDEs), see, e.g., \cite{Bensoussan:1995aa,
	DaPrato:2014aa,Debussche:2011aa}. 
Our goal is to refine some established results 
\cite{Bensoussan:1995aa,Debussche:2011aa} by focusing on 
scenarios where $V_n$ does not exhibit strong temporal 
$L^2$ convergence. To achieve this, we introduce 
a critical assumption: {\em a uniform 
mean $L^1$ temporal translation estimate for $V_n$}, 
see \eqref{eq:L1translate1} and \eqref{eq:Vn-assump_XI}. 
This assumption effectively compensates for the 
lack of strong convergence of $(\omega,t)\mapsto V_n(\omega,t)$ . 
The proposed refinements are particularly 
relevant in the context of equations characterized by 
singular coefficients and/or solutions. 
As we will see, the $L^1$ translation estimate is straightforward 
to verify for many SPDEs. 

The classical Skorokhod approach, which is based on 
nontrivial results concerning the tightness 
of probability measures and the almost sure 
representations of random variables, can assist in confirming 
strong temporal convergence of $V_n$. Subsequently, 
Lemma 2.1 from \cite{Debussche:2011aa} can be applied 
to pass to the limit in the stochastic integrals. 
Let $\U$, $X$ be Hilbert spaces, and denote by $L_2(\U,X)$ 
the space of Hilbert--Schmidt operators from 
$\U$ to $X$. Let $\U_1$ be a Hilbert space into 
which $\U$ embeds, such that the embedding is 
Hilbert--Schmidt, and under which the cylindrical 
Wiener process becomes a Hilbert space valued Wiener process
the sense of \cite[Section 4.1.1]{DaPrato:2014aa}. Assuming that 
\begin{equation}\label{eq:dgt_assumptions}
	\begin{aligned}
		\text{(i)} & \quad V_n \to V 
		\text{ in probability in } 
		L^2([0,T]; L_2(\U,X)),
		\\ \text{(ii)} & \quad
		W_n \to W \text{ in probability in } 
		C([0,T];\U_1),
	\end{aligned}
\end{equation}
Lemma 2.1 of \cite{Debussche:2011aa} states that
\begin{equation}\label{eq:ito_conv-in-prob}
	\int_0^\cdot V_n \,\d W_n 
	\overset{n\uparrow\infty}{\longrightarrow} 
	\int_0^\cdot V\,\d W 
	\text{ in probability in } L^2([0,T];X).
\end{equation}
In practice, $V_n$ is often a deterministic map $G(u_n)$ 
of a solution $u_n$ to the SPDE of which the stochastic 
integral is a term. 
The strong temporal convergence 
assumption \eqref{eq:dgt_assumptions}-(i) 
can often be obtained via the Skorokhod approach, 
if uniform statistical bounds can be obtained for 
$u_n$ in spaces like $L^p([0,T];B_{1})\cap W^{s,p}([0,T];B_{-1})$, 
where $B_{-1}$ and $B_1$ are reflexive Banach 
spaces for which there is a Banach space $B_0$ giving 
$B_1 \Subset B_0 \subset B_{-1}$, and 
$p\in (1,\infty)$, $s \in (0,1)$ (see, e.g., \cite[Theorem 2.1]{Flandoli:1995}). 

The ``martingale identification" approach offers an alternative to 
passing to the limit in SPDE sequences, as detailed in 
\cite[Pages 229--231]{DaPrato:2014aa} (for example). 
This approach, differing from the one based 
on \cite[Lemma 2.1]{Debussche:2011aa}, bypasses convergence 
theorems for stochastic integrals. Instead, it focuses on 
computing the limits of the deterministic integral terms in the equations and 
the quadratic variation of the stochastic integrals, leveraging 
a representation theorem for martingales to establish 
the existence of a probabilistic weak solution. However, a martingale 
representation theorem is not always available in the SPDE context. 
A more recent method \cite{Brzezniak:2011aa,Ondrejat:2010aa} 
utilizes only basic martingale and stochastic integral properties,  
avoiding the martingale representation theorem. This method is 
particularly effective in ``quasi-Polish" spaces like $C([0,T];X-w)$ 
(see \cite{Jakubowski:1997aa}) and has 
been much applied in subsequent research. 
Nonetheless, while these spaces adopt a 
weak spatial topology, their temporal topology remains strong 
(see also Remark \ref{rem:strongweak} herein).

While the Skorokhod approach is widely adopted, 
it can be technically challenging and often necessitates 
detailed and lengthy proofs of convergence. 
Moreover, there are examples 
where the practicality of this approach 
might be less evident. This may occur with SPDEs that 
are too singular, hindering solutions from being 
uniformly bounded or converging in the more 
traditional spaces aforementioned. 
For some examples, see 
\cite{GHKP-inviscid,Karlsen:2023aa}. 

The key observation of our paper is that intricate 
Skorokhod-type approaches can be bypassed when the primary goal is simply 
to pass to the limit in the governing SPDEs. This will be achieved through 
refinements of \cite[Lemma 2.1]{Debussche:2011aa}, using a 
mean $L^1$ temporal translation estimate---see \eqref{eq:L1translate1} 
and \eqref{eq:Vn-assump_XI}---as a substitute for the 
strong temporal convergence 
condition \eqref{eq:dgt_assumptions}-(i).

To provide context for our results, we will 
explore two examples. Variations of these examples will be examined 
in detail in Sections $\ref{sec:ste}$ and $\ref{sec:cl}$, where they will 
serve as applications of our findings. We begin by considering 
a sequence of transport (continuity) equations 
characterized by multiplicative $k$-valued Brownian noise:
\begin{align}\label{eq:example_spde}
	\d u_n + {\rm div}\,(b u_n) \, \d t = f\, \d t  
	+\sigma(u_n)\, \d W_n, \,\,\,\, u_n(0) =u_{0,n},
\end{align}
where the equations are interpreted in the weak sense with 
respect to the spatial variable $x$ and are evaluated pointwise 
with respect to the temporal variable $t$, being 
formulated as It{\^o} integral equations. 

The strong stability of weak solutions to deterministic 
transport equations was first addressed in the seminal work 
of DiPerna and Lions \cite{DiPerna:1989aa}.  
A natural question is whether strong stability 
results {\`a} la \cite{DiPerna:1989aa} can be established for 
stochastic transport equations such as \eqref{eq:example_spde}.
To keep the presentation simple and focused on 
the core issue, we will assume that the ``data" 
$(b, \Div b, f,u_{0,n})$ of \eqref{eq:example_spde} 
possess sufficiently high integrability. See \cite{GHKP-inviscid} 
for a more ``singular" example. We assume 
that $u_{0,n}$ converges strongly to $u_0$ in $L^p_{\omega, t, x}$ 
for some $p > 2$. The primary point of departure is 
the assumption that the solutions $u_n$ of the 
stochastic transport equations \eqref{eq:example_spde} 
converge weakly in $L^p_{\omega, t, x}$ to a limit $u$.

In the deterministic scenario (where $\sigma\equiv 0$), the definition 
of weak convergence would directly allow us to formulate a 
transport equation for the limit object $u$. This forms the starting point 
for any strong stability analysis. However, this trivial step becomes 
less straightforward in the stochastic case (even when the 
noise coefficient $\sigma$ is a linear function). 
If $\sigma$ is globally Lipschitz, then the $L^p$ bound on $u_n$ 
allows us to infer that there exists a non-relabelled subsequence such 
that $\sigma(u_n) \rightharpoonup \overline{\sigma}$ in $L^p_{\omega,t,x}$, 
converging weakly to some limit $\overline{\sigma}$. 
However, even when simply attempting to confirm that the weak limit 
$u$ solves the limiting equation (replacing $\sigma(u)$ with 
$\overline{\sigma}$), it becomes crucial to understand the effects of 
this weak convergence on the behavior of the 
associated stochastic integrals. 

As in \cite{DiPerna:1989aa}, our aim extends beyond merely 
establishing weak convergence; we seek to demonstrate that $u_n$ 
converges strongly, thereby ensuring that $\overline{\sigma} = \sigma(u)$. 
To achieve this transition from weak to strong convergence, we employ a 
well-known propagation of compactness strategy. This involves 
formulating equations for nonlinear compositions $\eta(u_n)$ of $u_n$, as well 
as for their respective weak limits $\overline{\eta}$. 
This leads to us to stochastic integrals of the form
\begin{equation}\label{eq:intro-stoch-int}
	\int_0^T V_n \,\d W_n,  \quad 
	V_n:=\int \varphi(x)\vartheta(u_n)\, \d x, 
	\quad \varphi \in C^\infty_c,
\end{equation}
where $\vartheta(u)$ is a nonlinear function given by 
$\vartheta(u)=\eta'(u)\sigma(u)$. 
From the uniform $L^p$ bound on $u_n$, we may assume that 
$\vartheta(u_n)$ converges weakly in $L^{p_\vartheta}_{\omega, t, x}$, 
for some exponent $p_\vartheta$ dependent on $p$ and the growth 
of $\vartheta$. Consequently, our analysis is directed towards 
examining the convergence of the stochastic integrals 
\eqref{eq:intro-stoch-int}, under the condition of weak convergence 
$V_n \rightharpoonup V$ in the space $L^{p_\vartheta}_{\omega,t}$. 
Even when we replace $\sigma(u_n)$ with a strongly converging 
additive coefficient $\sigma_n$, proving convergence is still problematic. 
The issue lies with the nonlinear term $\vartheta(u_n) 
= \eta'(u_n) \sigma_n$, which remains weakly convergent.

Our second example entails a sequence of stochastic conservation 
laws of the following form:
\begin{align}\label{eq:SCL}
	d u_n + \nabla \cdot F_n(u_n)\, \d t 
	=  \sigma_{n}(u_n)\,\d W_n.
\end{align}
In the so-called kinetic formulation \cite{Lions:1994qy}, adapted 
to the stochastic setting in \cite{Debussche:2010fk}, 
the subgraphs $\chi_n(t,x, \xi) = \one{\{\xi < u_n(t,x)\}}$ of $u_n$, 
where $u_n$ are entropy solutions in the sense of Kru{\v{z}}kov, 
satisfy the following kinetic equations:
\begin{equation}\label{eq:example_spde2}
	\begin{aligned}
		\d \chi_n& + F_n'(\xi) \cdot \nabla_x \chi_n\, \d t
		- \sigma_{n}(\xi) \,\pd_\xi \chi_n \,\d W_n
		\\ & \qquad 
		-\frac12\pd_{\xi}\bk{ \abs{\sigma_{n}(\xi)}^2 \pd_{\xi}\chi_n}\, \d t
		= \pd_{\xi} m_n \, \d t,
		\quad \chi_n(0) = \chi_{n,0},
	\end{aligned}
\end{equation}
where the kinetic defects $m_n$ are 
random variables taking values in the 
space of measures. The objective is to 
demonstrate that a $L^\infty_{\omega,t,x,\xi}$ 
weak-$\star$ limit $\chi$ of $\chi_n$ serves as 
a weak solution to the kinetic equation in its limiting form. 
This equation is formally deduced by 
omitting the $n$ subscripts in 
\eqref{eq:example_spde2}. Again this 
necessitates the examination of stochastic 
integrals, where the integrands exhibit 
weak convergence in $(\omega,t)$. 
This issue, when the Wiener process 
$W_n$ varies with the index $n$, is 
relevant in stochastic conservation laws 
with discontinuous flux, where there is 
a need to establish strong compactness 
through microlocal defect measures 
\cite{Karlsen:2023aa}. Furthermore, this 
problem has been encountered in the 
exploration of numerical approximations 
for stochastic conservation laws 
\cite{Dotti:2018aa,Dotti:2020}.

Motivated by these two examples, 
we are therefore particularly interested 
in the general situation described by 
\eqref{eq:ito_convergence1} when 
there is a lack of temporal compactness, i.e., 
when there is no strong topology 
serving the temporal variable in which 
the convergence $V_n \to V$ occurs.  

Without an assumption such as \eqref{eq:dgt_assumptions}-(i), 
it is still feasible to use the SPDEs to derive 
a {\em mean $L^1$-translation estimate 
in the time variable $t$}, weak in the spatial variable. 
With $v_n = \sigma(u_n)$, where $u_n$ solves 
\eqref{eq:example_spde}, we have that for any $\beta \in C^\infty_{c}$, 
\begin{align}\label{eq:L1translate1}
	\Ex \int_h^T \abs{\,\int \beta(x) 
	\bk{ v_n(t,x)-v_n(t - h,x)} \,\d x}\,\d t 
	= o_{h\downarrow 0}(1), 
\end{align}
uniformly in $n$. Similarly, for $v_n = \sigma_n(\xi) \,\pd_{\xi } \chi_n$ 
in \eqref{eq:example_spde2}, we have 
\eqref{eq:L1translate1} with the spatial 
variable $x$ replaced by the spatio-kinetic variable $(x,\xi)$.
Without $\omega$-dependence in $v_n$, 
the estimate \eqref{eq:L1translate1} implies 
strong $L^1$ compactness of 
$t\mapsto V_n=\int \beta(x) v_n(t) \,\d x$ (see \eqref{eq:intro-stoch-int} 
with $\eta'=1$) via the Kolmogorov--Riesz compactness theorem. 
Of course with $\omega$ present, \eqref{eq:L1translate1} does 
not imply strong compactness of $(\omega,t)\mapsto V_n(\omega,t)$.

The key point here is that the mean $L^1$ temporal 
translation estimate \eqref{eq:L1translate1} is readily 
available for many SPDEs. For \eqref{eq:example_spde} 
and \eqref{eq:example_spde2}, this can be shown 
by deriving the equation for $\d v_n$ from the equation for $\d u_n$ 
or $\d \chi_n$ and integrating temporally 
over $[t - h, t]$ and then once again over $[h,T]$ 
after taking absolute values (see Lemma 
\ref{thm:timetranslation} and Sections 
\ref{sec:ste} and \ref{sec:cl}). This double 
integration in time heuristically recovers 
some ``temporal compactness" in 
the sense of \eqref{eq:L1translate1}.
The estimate \eqref{eq:L1translate1} remains valid even 
when the source term $f$ in \eqref{eq:example_spde} is 
an $L^1$ function, or in cases where it is a measure, such as the 
defect term $\pd_{\xi} m_n$ in \eqref{eq:example_spde2} (see 
\cite{GHKP-inviscid,Karlsen:2023aa}).

Now assume we have the $L^1$ translation estimate as 
specified in \eqref{eq:L1translate1}, coupled with the weak 
convergence $v_n \rightharpoonup v$ in $L^p_{\omega,t,x}$ 
for some $p > 2$. The latter implies that for $\varphi \in C^\infty_{c}$, 
the processes $V_n=\int \varphi v_n\,\d x$ converge weakly in 
$L^p(\Omega \times [0,T])$. 
Our primary result (Theorem \ref{thm:main_x1}) 
then implies the convergence
\begin{align}\label{eq:windowshop}
	\int_0^T \int \varphi(x) v_n\,\d x\, \d W_n 
	\tonweak \int_0^T \int \varphi(x) v \,\d x\,\d W
	\quad \text{in $L^2(\Omega)$}. 
\end{align}
Note that the mode of convergence in 
\eqref{eq:ito_conv-in-prob} is {\em weak} in $L^2(\Omega)$. 
This is often sufficient when studying SPDEs, as weak convergence in 
$L^2(\Omega)=L^2(\Omega,\mathcal{F},\mathbb{P})$ 
allows us to integrate a sequence of SPDEs with 
respect to $\mathbb{P}$ against any element of 
$L^2(\Omega)$ and conclude that the limit 
SPDE holds $\mathbb{P}$--a.s., which 
is generally the desired outcome.
We will also consider the stronger assumption 
$v_n \rightharpoonup v$ a.s.~in $L^p_{t,x}$, 
in which case the convergence in \eqref{eq:windowshop} 
becomes strong in $L^2(\Omega)$.  
We will demonstrate  \eqref{eq:windowshop} by 
carefully modifying the proof of  
\cite[Lemma 2.1]{Debussche:2011aa}.

The remaining part of the paper is organised as follows: 
In Section \ref{sec:mainresult}, we prove 
our main result (Theorem \ref{thm:main_x1}). 
This is done in the setting of Hilbert space-valued 
stochastic integrals. In Section \ref{sec:nonreflexive}, 
we provide an example and adapt Theorem 
\ref{thm:main_x1} slightly to integrals taking 
values in a non-reflexive Banach space. 
In Section \ref{sec:corollaries}, we present 
corollaries in which the assumptions 
on the convergence of $V_n \to V$ are further refined.
In the last two sections, we give applications of our convergence 
theorems to stochastic transport equations (Section \ref{sec:ste}) 
and stochastic conservation laws (Section \ref{sec:cl}).

\section{Main convergence result}\label{sec:mainresult}
In this section, we will prove our primary convergence theorem. 

\begin{theorem}[Weak $L^2(\Omega)$ convergence of stochastic integrals]
\label{thm:main_x1}
Fix a probability space 
$\bigl(\Omega,\mathcal{F},\mathbb{P}\bigr)$. 
Let  $X$ be a real, separable Hilbert space. For each $n \in \N$, let  
$W_n$ 
be an $\R^k$-valued Wiener process  on the 
filtered probability space $\bigl(\Omega,\mathcal{F},
\{\mathcal{F}^n_t\}_{t \in [0,T]},\mathbb{P}\bigr)$, 
and $W$ an $\R^k$-valued Wiener process on $\bigl(\Omega,\mathcal{F},
\{\mathcal{F}_t\}_{t\in [0,T]},\mathbb{P}\bigr)$.

Fix $p > 2$. For each $n \in \mathbb{N}$, let 
$V_n$ be an $X^{m \times k}$-valued $\{\mathcal{F}^n_t\}$-predictable 
process, and $V$ be an $X^{m \times k}$-valued 
$\{\mathcal{F}_t\}$-predictable process. 
Suppose
\begin{equation}\label{eq:V_n_to_V_wk}
	\begin{split}
		& V_n\tonweak V \quad 
		\text{in $L^p(\Omega \times [0,T];X^{m \times k})$}.
	\end{split}
\end{equation}
We further require that 
\begin{equation}\label{eq:Wn-assump_X}
	W_n \ton W \quad \text{in $C([0,T];\R^k)$, 
	$\mathbb{P}$--a.s.}
\end{equation}

Suppose for a given $\beta \in X^*$ that the following 
mean $L^1$ temporal translation estimate holds:
\begin{align}\label{eq:Vn-assump_XI}
	\Ex \int_h^T \abs{
	\LL \beta , V_n\RR_{X^*,X}(t) - \LL \beta, V_n\RR_{X^*,X}(t - h)}\,\d t
	\xrightarrow{ h \downarrow 0} 0	
	\quad\text{uniformly in $n$.}
\end{align}
Then for any $t \in [0,T]$, 
$$
\int_0^t \bigl\LL \beta,V_n \bigr\RR_{X^*,X}  \, \d W_n
\tonweak \int_0^t \LL \beta, V \RR_{X^*,X} \, \d W
\quad \text{in $L^2(\Omega;\R^m)$}.
$$
\end{theorem}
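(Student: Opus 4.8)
The plan is to reduce to a finite-dimensional problem, discretize the integrand in time, and control the discretization error in $L^2(\Omega\times[0,T])$ by interpolating the hypothesized $L^1$ translation estimate against the $L^p$ bound. Write $G_n:=\LL\beta,V_n\RR_{X^*,X}$ and $G:=\LL\beta,V\RR_{X^*,X}$, which are $\R^{m\times k}$-valued; since $\beta\in X^*$ is a fixed bounded functional, \eqref{eq:V_n_to_V_wk} gives $G_n\tonweak G$ in $L^p(\Omega\times[0,T];\R^{m\times k})$, while \eqref{eq:Vn-assump_XI} is precisely a mean $L^1$ translation estimate for $G_n$. In particular $\{G_n\}$ is bounded in $L^p$, so the It\^o isometry (together with the Burkholder--Davis--Gundy inequality, where $p>2$ first enters) shows that $\{\int_0^t G_n\,\d W_n\}_n$ is bounded in $L^2(\Omega;\R^m)$; it therefore suffices to verify the claimed weak convergence by testing against a dense class of $\Phi\in L^\infty(\Omega;\R^m)$. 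For a uniform partition of $[0,T]$ of mesh $h$, let $G_n^h$ denote the $\{\mathcal F^n_t\}$-predictable, piecewise-constant time average of $G_n$ built from the preceding subinterval, and split
\[
\int_0^t G_n\,\d W_n-\int_0^t G\,\d W
= \int_0^t\bk{G_n-G_n^h}\,\d W_n
+ \bk{\int_0^t G_n^h\,\d W_n-\int_0^t G^h\,\d W}
- \int_0^t\bk{G-G^h}\,\d W.
\]

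The crux is the discretization error $\int_0^t(G_n-G_n^h)\,\d W_n$, which cannot be made small in $L^2(\Omega)$ by the usual argument because we lack strong temporal $L^2$ regularity of $G_n$. This is exactly where the two hypotheses combine. By construction $G_n-G_n^h$ is an average of temporal differences $G_n(t)-G_n(s)$ with $\abs{t-s}\le 2h$, so Lemma \ref{thm:timetranslation} and \eqref{eq:Vn-assump_XI} yield $\norm{G_n-G_n^h}_{L^1(\Omega\times[0,T])}=o_{h\downarrow0}(1)$ uniformly in $n$; meanwhile the averaging operator is an $L^p$-contraction, so $\norm{G_n-G_n^h}_{L^p}\le 2\norm{G_n}_{L^p}\le C$. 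Interpolating the $L^2$ norm between $L^1$ and $L^p$ (possible precisely because $p>2$) gives $\norm{G_n-G_n^h}_{L^2(\Omega\times[0,T])}=o_{h\downarrow0}(1)$ uniformly in $n$, and the It\^o isometry then upgrades this to $\Ex\abs{\int_0^t(G_n-G_n^h)\,\d W_n}^2=o_{h\downarrow0}(1)$ uniformly in $n$. The third term is treated identically (indeed more easily, since $G^h\to G$ in $L^2(\Omega\times[0,T])$ as $h\downarrow0$ by the Lebesgue differentiation theorem), so the first and third terms are small in $L^2(\Omega;\R^m)$, uniformly in $n$, once $h$ is small.

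It remains, for each fixed $h$, to pass to the limit $n\to\infty$ in the middle finite-sum term. Writing $\int_0^t G_n^h\,\d W_n=\sum_j \bar G_{n,j}\,\Delta_j W_n$, with $\bar G_{n,j}$ the block average of $G_n$ and $\Delta_j W_n$ the corresponding (stopped) increment of $W_n$, the weak convergence $G_n\tonweak G$ gives $\bar G_{n,j}\tonweak\bar G_j$ in $L^p(\Omega)$ for each fixed $j$, while \eqref{eq:Wn-assump_X} together with the uniform integrability of the Gaussian increments gives $\Delta_j W_n\to\Delta_j W$ strongly in every $L^q(\Omega)$, $q<\infty$. A weak-times-strong product argument — testing against bounded $\Phi$ and absorbing the strongly convergent factor $\Delta_j W_n$ into the test function — then shows $\int_0^t G_n^h\,\d W_n\tonweak\int_0^t G^h\,\d W$ in $L^2(\Omega;\R^m)$ for fixed $h$. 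A standard $3\eps$ argument, combining this with the uniform-in-$n$ smallness of the two error terms, completes the proof. I expect the discretization-error control to be the main obstacle: the key realization is that the $L^1$ translation estimate, upgraded to $L^2$ by interpolation against the $L^p$ bound with $p>2$, is exactly what substitutes for the missing strong temporal convergence in \cite[Lemma 2.1]{Debussche:2011aa}; the only remaining care is the predictability bookkeeping for the delayed time average (and the harmless first block, absorbed into the same estimates).
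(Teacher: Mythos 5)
Your proposal is correct and shares the paper's overall architecture: a three-term decomposition into two temporal-regularization errors plus a regularized middle term, with the errors controlled uniformly in $n$ by interpolating the mean $L^1$ translation estimate \eqref{eq:Vn-assump_XI} against the uniform $L^p$ bound (this is exactly where $p>2$ enters in the paper as well, via $\norm{F}_{L^2}^2\le\norm{F}_{L^p}^{p/(p-1)}\norm{F}_{L^1}^{(p-2)/(p-1)}$ and the It\^o isometry), and the middle term handled for fixed regularization parameter by a weak-times-strong product argument exploiting \eqref{eq:Brown-cov}. Where you genuinely differ is the choice of regularization and hence the treatment of the middle term: the paper convolves with a smooth one-sided kernel $\mathcal{R}_\rho$, so that $\mathcal{R}_\rho\LL\beta,V_n\RR$ is a.s.\ $C^\infty$ in time, and then integrates by parts in the stochastic integral (using that the smooth integrand has zero cross-variation with $W_n$), which produces four sub-terms $I_{2,1},\dots,I_{2,4}$ requiring the adjoint identity \eqref{eq:Rrho-adjoint-new} and separate boundary-term estimates; your delayed piecewise-constant average turns the middle term directly into a finite sum $\sum_j\bar G_{n,j}\,\Delta_jW_n$, for which weak $L^p(\Omega)$ convergence of each block average against the strongly $L^{q}(\Omega)$-convergent increments suffices. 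Your route is more elementary at that step (no stochastic integration by parts, no smoothness of the regularized integrand), at the cost of having to track predictability of the delayed average and the first block; the paper's mollifier buys a cleaner uniform treatment of the $J_2$-type boundary error and reuses the same machinery verbatim in Theorem \ref{thm:main_x2} and Lemma \ref{thm:I_2_as_version}. Two cosmetic points: the BDG inequality is not needed for the uniform $L^2(\Omega)$ bound on the integrals (the It\^o isometry plus H\"older in $t$ already gives it), and the citation of Lemma \ref{thm:timetranslation} in your discretization-error estimate is misplaced --- that lemma is a device for \emph{verifying} \eqref{eq:Vn-assump_XI} in applications, whereas here the hypothesis \eqref{eq:Vn-assump_XI} itself is all you use.
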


\begin{proof}
We carry out the proof for $t = T$ only, 
remarking that each step is valid for 
any $t \in [0,T]$ in place of $T$.  
Without loss of generality, we identify 
$X$ with its dual $X^*$ and 
shall often drop the subscript on its 
angled duality bracket, which is applied 
component-wise to $V_n$ and $V$. 
We also use single line delimiters (absolute value delimiters) 
for the Euclidean norm of any finite dimension-valued object. 
We begin by noting that the weak 
convergence assumption \eqref{eq:V_n_to_V_wk}, 
implies 
\begin{align}\label{eq:V_n_to_V_wk2}
\LL \beta, V_n \RR \tonweak \LL \beta, V \RR 
\quad \text{in $L^p(\Omega \times [0,T];\R^{m \times k})$}.
\end{align}
Moreover, we get the uniform bound:
\begin{equation}\label{eq:beta_V_pairbnd}
\begin{aligned}
\norm{\LL \beta, V_n\RR}_{L^{p}(\Omega\times [0,T];\R^{m \times k})}^{p}
& = \Ex \int_0^T 
	\abs{\LL \beta, V_n(t)\RR}^p\,\d t \\
&\le \norm{\beta}_{X}^{p}
	 \Ex \int_0^T 
	  \norm{V_n(t)}_{X^{m \times k}}^{p}\,\d t\\
 & =  \norm{\beta}_{X}^{p}
\Ex \norm{V_n}_{L^p([0,T];X^{m \times k})}^{p}\lesssim 1.
\end{aligned}
\end{equation}
This uniform bound in turn gives us 
\begin{equation}\label{eq:Lp_w_L2_t_H0-new}
\Ex\norm{\LL \beta,V\RR}_{L^{p}([0,T];\R^{m \times k})}^p 
	\lesssim 1.
\end{equation}
Obviously, \eqref{eq:beta_V_pairbnd} and 
\eqref{eq:Lp_w_L2_t_H0-new} also imply 
similar uniform bounds for 
$\LL \beta, V_n \RR$ and $\LL \beta, V \RR$ in 
$L^{p_\omega}(\Omega; L^{p_t}([0,T];\R^{m \times k}))$ with 
$p_\omega, p_t \le p $. We shall often 
use these bounds with $p_t = 2$. 
%


We divide the remainder of the proof into five steps.

\noindent \textit{1. Temporal regularisation.}

For $\rho > 0$, let $\mathcal{R}(t)\ge 0$ be 
a smooth function supported on $\R_{\ge 0}$ 
such that $\int_0^\infty \mathcal{R}(t)\,\d t = 1$. 
Set $\mathcal{R}_\rho(t) 
	= \rho^{-1} \mathcal{R}_\rho(t/\rho)$. 
From the properties of $\mathcal{R}_\rho(t)$, 
\begin{align}\label{eq:R_delta_int_to1}
\forall  \delta > 0, \qquad 
\int_0^\delta \mathcal{R}_\rho(t) \,\d t 
	\xrightarrow{\rho \downarrow 0} 1.
\end{align}

We will also use $\mathcal{R}_\rho $ to 
denote the (one-sided) temporal 
regularisation operator 
\begin{align}\label{eq:R_mollification}
	\mathcal{R}_\rho f(t) 
	= \int_0^t \mathcal{R}_\rho(t - s) f(s) \,\d s, 
	\quad f\in L^1([0,T]),\quad \rho>0,
\end{align}
for $t\in [0,T]$. 
Set $\widetilde{\mathcal{R}}_\rho(t)
:=\mathcal{R}_\rho(-t)$ and also 
\begin{equation*}
	\widetilde{\mathcal{R}}_\rho f(s):=
	\int_s^T \widetilde{\mathcal{R}}_\rho(s-t)f(t)\, \d t,
	\quad s\in [0,T].
\end{equation*} 
Observe that for $f,g\in L^1([0,T])$, 
\begin{equation*}
	\int_0^T \mathcal{R}_\rho f(t)\,g(t)\,\d t
	= \int_0^T f(t)\, \widetilde{\mathcal{R}}_\rho g(t)\,\d t,
\end{equation*}
which follows from Fubini's theorem. 
We can also derive 
\begin{equation}\label{eq:Rrho-adjoint-new}
	\int_0^T \pd_t\mathcal{R}_\rho f(t)\,g(t)\,\d t
	=-\int_0^T f(t)\, 
	\pd_t\widetilde{\mathcal{R}}_\rho g(t)\,\d t.
\end{equation}
By standard convolution arguments, 
\begin{equation}\label{eq:conv-prop}
	\begin{aligned}
		& \norm{\mathcal{R}_\rho f}_{L^r([0,T])}
		\le \norm{f}_{L^r([0,T])},
		\quad 
		\norm{f-\mathcal{R}_\rho f}_{L^r([0,T])}
		\torho 0, 
		\\ &
		\norm{\widetilde{\mathcal{R}}_\rho f}_{L^r([0,T])}
		\le \norm{f}_{L^r([0,T])}, 
		\quad 
		\norm{f-\widetilde{\mathcal{R}_\rho} f}_{L^r([0,T])}
		\torho 0,
	\end{aligned}
\end{equation}
for any $f\in L^r([0,T])$ with $r\in [1,\infty)$.

For any fixed $\rho > 0$, and any $\ell \in \N$, 
convolution against $\pd_t^\ell \mathcal{R}_\rho$ 
is also a bounded opeator $L^p_t \to L^p_t$ by the 
smoothness of $\mathcal{R}_\rho$ 
and by standard convolution arguments. Therefore 
$\mathcal{R}_\rho\bigl \LL \beta, V_n \bigr\RR$
additionally lies in the space $L^p(\Omega; C^\infty([0,T];\R^{m \times k}))$
(and similarly for $\tilde{\mathcal{R}}_\rho$ 
in place of $\mathcal{R}_\rho$).

We now consider the sequence 
$\int_0^T \bigl\LL \beta, V_n \bigr\RR\,\d W_n \in L^p(\Omega;\R^m)$ and its 
proposed limit by decomposing the difference 
between any element in the sequence and the proposed 
limit as follows:
\begin{equation}\label{eq:I(n)-split}
	\begin{split}
		I(n) &=\int_0^T \bigl\LL \beta, V_n \bigr\RR  \,\d W_n 
		-\int_0^T \LL \beta, V \RR  \,\d W
		=I_1(\rho,n)+I_2(n,\rho)+I_3(\rho),
\end{split}
\end{equation}
where
\begin{align*}
	 I_1(\rho,n) 
	&=\int_0^T \bigl\LL \beta, V_n \bigr\RR  \,\d W_n
	-\int_0^T \mathcal{R}_\rho\bigl \LL \beta, V_n \bigr\RR \,\d W_n,
	\\ 
	 I_2(n,\rho)
	&= \int_0^T \mathcal{R}_\rho \bigl\LL \beta, V_n \bigr\RR \,\d W_n
	-\int_0^T  \mathcal{R}_\rho \LL \beta, V \RR \,\d W,
	\\ 
	 I_3(\rho)
	&=\int_0^T  \mathcal{R}_\rho \LL \beta, V \RR  \,\d W
	-\int_0^T \LL \beta, V \RR  \,\d W.
\end{align*}

\noindent \textit{2. The strong 
regularisation 
limit of $I_1(\rho,n)$ as $\rho\downarrow 0$.}

By the It\^o isometry,
\begin{equation}\label{eq:mollification_V_n}
\begin{aligned}
\Ex I_1^2 &= \Ex  \int_0^T \abs{\LL \beta, V_n \RR 	
	- \mathcal{R}_\rho \LL \beta, V_n\RR }^2 \,\d t\\
& = \Ex  \int_0^T \abs{\LL \beta, V_n \RR(t) 	
	-\int_0^t \mathcal{R}_\rho(t - s)
	\LL \beta, V_n\RR(s) \,\d s }^2 \,\d t\\
& \le \Ex  \int_0^T \abs{ \int_0^t 
	\mathcal{R}_\rho(t - s) 
	\big(\LL \beta, V_n\RR(t) - \LL \beta, V_n\RR(s) \big)\,\d s }^2 \,\d t\\
& \qquad +  \Ex  \int_0^T 
	\bk{1 - \int_0^t \mathcal{R}_\rho(t - s)\,\d s }^2
	\abs{ \LL \beta, V_n(t) \RR}^2 \,\d t\\
&=: J_1 + J_2.
\end{aligned}
\end{equation}

First, by a change of variable $s \mapsto t - s$, 
then by Jensen's/H\"older's inequality, 
\begin{align*}
J_1 &=  \Ex  \int_0^T \abs{
	 \int_0^t \mathcal{R}_\rho(s) 
	 	\big(\LL \beta, V_n\RR(t) - \LL\beta, 
	 	V_n\RR(t - s) \big) \,\d s }^2 \,\d t\\
& \le\Ex \int_0^T \underbrace{\norm{\mathcal{R}_\rho}_{L^1([0,t])}}_{\le 1} 
	\int_0^t  \mathcal{R}_\rho(s) 
	\abs{\LL \beta, V_n\RR(t) 
		-\LL \beta, V_n\RR(t - s)}^2 \,\d s \,\d t\\
& \le \Ex\int_0^T  \int_0^T \mathds{1}_{\{t \ge \rho u\}}
	 \mathcal{R}_\rho(\rho u) 
	\abs{\LL \beta, V_n\RR(t) -\LL \beta, 
		V_n\RR(t - \rho u)\RR}^2 \,\d \bk{\rho u} \,\d t\\
& = \int_0^\infty \mathds{1}_{\{u \le T/\rho\}} 
	\rho \mathcal{R}_\rho(\rho u) \, 
	\Ex \int_{\rho u}^T \abs{\LL \beta, V_n\RR(t) 
		- \LL \beta, V_n\RR(t - \rho u)}^2 \,\d t \,\d u.
\end{align*}
By construction, $\rho \mathcal{R}_\rho(\rho u) 
	= \mathcal{R}(u) \lesssim 1$ 
as $\rho \downarrow 0$. 
By the $L^1$ translation estimate 
\eqref{eq:Vn-assump_XI}, the 
integrand of the $\d u$  integral above 
tends to zero a.e.~in $u$, uniformly in $n$, 
via the interpolation estimate 
$\norm{F}^2_{L^2([0,T];\R^{m \times k})} \le 
\norm{F}_{L^{p}([0,T];\R^{m \times k})}^{p/(p - 1)}
\norm{F}_{L^1([0,T];\R^{m \times k})}^{(p - 2)/(p - 1)}$:
\begin{align*}
&\Ex\int_{\rho u}^T
	\abs{\LL \beta, V_n\RR(t) 
		-\LL \beta, V_n\RR(t - \rho u)}^2 \,\d t \\
&\le\Big[ \bk{\Ex \norm{\LL \beta, V_n\RR}_{L^{p }([0,T];\R^{m \times k})}^{p }}^{1/(p  - 1)}
	 +  \bk{\Ex \norm{\LL \beta, V\RR}_{L^{p }([0,T];\R^{m \times k})}^{p }}^{1/(p  - 1)}\Big]\\
&\qquad \times 	 \bk{ \Ex \int_{\rho u}^T
	\abs{\LL \beta, V_n\RR(t) 	
	-\LL \beta,V_n\RR(t - \rho u)} \,\d t }^{(p - 2 )/(p - 1 )}\torho 0,
\end{align*}
where we have used the bounds 
\eqref{eq:beta_V_pairbnd} and 
\eqref{eq:Lp_w_L2_t_H0-new}. 
Moreover, for any $\theta > 1$, 
\begin{align*}
& \int_0^{T/\rho} \abs{\rho \mathcal{R}_\rho(\rho u) \Ex\int_{\rho u}^T
	\abs{\LL \beta, V_n\RR(t) 
		- \LL \beta, V_n\RR(t - \rho u)}^2 \,\d t }^{\theta}\,\d u\\
& =    \int_0^{T/\rho}  \rho^{\theta} \mathcal{R}_\rho^{\theta}(\rho u) 
	\bk{\Ex\int_0^T\abs{\LL \beta, V_n\RR(t) 
		-\LL \beta,V_n\RR(t - \rho u)}^2 \,\d t}^{\theta}\,\d u\\
& \lesssim_\theta   \int_0^{T/\rho}  \mathcal{R}^{\theta}(u) \,\d u
\bk{\Ex \int_0^T
	\abs{\LL \beta, V_n\RR(t)}^2\,\d t}^\theta
	\overset{\eqref{eq:beta_V_pairbnd}}{\lesssim} 1.
\end{align*}
Therefore, by the Vitali convergence theorem  
(see, e.g., \cite[page 94]{royden2010real}),  
we have 
$$
J_1 \xrightarrow{\rho \downarrow 0} 0, 
	\quad\text{uniformly in $n$}.
$$

For any $t > \delta > 0$, by 
\eqref{eq:R_delta_int_to1}, we can estimate
$$
1 - \int_0^t
	\mathcal{R}_\rho(t - s)\,\d s
\le 1 - \int_{t - \delta}^t \mathcal{R}_\rho(t - s)\,\d s 
= 1 - \int_0^\delta \mathcal{R}_\rho(s)\,\d s 
= o_{\rho\downarrow 0}(1),
$$
and so 
\begin{align*}
&\bk{\int_0^T \abs{1 - \int_0^t \mathcal{R}_\rho(t - s)
			\,\d s }^{2p/(p - 2)}\,\d t }^{(p - 2)/p}\\
& \le \bk{\delta^{2p/(p - 2)} + \int_\delta^T
 	 +  \abs{1 - \int_0^t \mathcal{R}_\rho(t - s)
 	 		\,\d s }^{2p/(p - 2)}\,\d t }^{(p - 2)/p}
 	\lesssim \bk{\delta^2 +  o_{\rho \downarrow 0}(1)}.
\end{align*}
Using  H\"older's inequality, then, 
\begin{align*}
J_2  
&\lesssim_T \bk{\delta^2 +  o_{\rho \downarrow 0}(1)}
\bk{\Ex \norm{\LL \beta, V_n\RR}_{L^{p }([0,T];\R^{m \times k})}^{p }}^{2/p},
\end{align*}
which tends to $0$ as $\rho \downarrow 0$ 
by choosing $\delta = \rho$ 
and using the uniform bound 
implied by \eqref{eq:beta_V_pairbnd}. 
And we find that  $I_1(\rho, n) \torho 0$ in $L^2(\Omega;\R^m)$ as 
$\rho \downarrow 0$, uniformly in $n$.

\smallskip
\noindent \textit{3. The strong regularisation 
limit of $I_3(\rho)$ as $\rho \downarrow 0$.}

By \eqref{eq:Lp_w_L2_t_H0-new},  $\LL \beta, V \RR 
\in L^2([0,T];\R^{m \times k})$, $\mathbb{P}$--a.s.~(since $p >  2$). 
Hence,
\begin{align*}
	\int_0^T \abs{ \LL \beta, V \RR
	-\mathcal{R}_\rho\LL \beta, V \RR}^2 
	\,\d t \torho 0, 
	\quad \text{$\mathbb{P}$--a.s.}
\end{align*}
By H\"older's inequality and Young's convolution 
inequality on Bochner 
spaces (see, e.g., \cite[Proposition 1.2.5, 
	Lemma 1.2.30]{BanachII:2017}),
, we obtain
\begin{equation}\label{eq:V_nR_rho_highermoment}
	\begin{aligned}
		&\int_0^T \abs{\LL \beta, V \RR
		-\mathcal{R}_\rho
		\LL \beta,V \RR }^2 \,\d t
		\lesssim \int_0^T \abs{\LL \beta, V \RR }^2 
		+ \abs{\mathcal{R}_\rho
		\LL \beta, V \RR }^2 \,\d t  
		\\ & \quad 
		\le \bk{1+\norm{\mathcal{R}_\rho}_{L^1([0,T])}^{2}}
		\int_0^T \abs{\LL \beta, V \RR}^2\,\d t\\
&\quad	\lesssim  \int_0^T \abs{\LL \beta, V \RR}^2\,\d t 
		 \overset{\eqref{eq:Lp_w_L2_t_H0-new}}{\in} L^1(\Omega).
	\end{aligned}
\end{equation}
Therefore, by Lebesgue's dominated convergence theorem
(in the $\omega$ variable),
\begin{align}\label{eq:mollification_V}
	\Ex \int_0^T \abs{\LL \beta, V \RR
	-\mathcal{R}_\rho \LL \beta, V \RR}^2\,\d t
	\torho 0.
\end{align}
By the It\^o isometry, the strong 
convergence $I_3(\rho)\torho 0$ in $L^2(\Omega;\R^m)$ 
then follows.

\smallskip
\noindent \textit{4. The weak limit of 
$I_2(n,\rho)$ as $n \uparrow \infty$.}

Since $\mathcal{R}_\rho \bigl\LL \beta, V_n\bigr\RR$ 
and $\mathcal{R}_\rho \LL \beta, V\RR$ are almost surely 
smooth in time (for each fixed $\rho>0$), they have zero 
quadratic variation. Moreover, their cross-variation with 
any process with finite quadratic variation must 
also be zero. This allows us to apply integration by parts 
to each integral in $I_2(n, \rho)$ to obtain:
$$
I_2(n,\rho)=I_{2,1}+I_{2,2}+I_{2,3}+I_{2,4},
$$
where
\begin{equation}\label{eq:I_2_decomp}
	\begin{aligned}
		& I_{2,1} =-\int_0^T \bk{\pd_t \mathcal{R}_\rho 
		\bigl\LL \beta, V_n \bigr\RR  
		-\pd_t\mathcal{R}_\rho \LL \beta,V \RR }\, W_n\,\d t, 
		\\ &
		I_{2,2} = \mathcal{R}_\rho
		\bigl \LL \beta,V_n-V \bigr\RR(T)\, W_n(T), 
		\\ & 
		I_{2,3}=\int_0^T \pd_t \mathcal{R}_\rho \LL \beta, V\RR 
		\,\bk{W - W_n}\,\d t,
		\\ & 
		I_{2,4}= \mathcal{R}_\rho\LL \beta, V \RR (T)\,
		\bk{ W_n(T)-W(T)}.
	\end{aligned}
\end{equation}

We aim to show that for any $Y \in L^2(\Omega)$, 
the $n\to \infty$ limit of $\Ex[Y I_{2,1}]$ is zero. 
(It is sufficient to take a scalar-valued $Y$ 
because weak convergence in $\R^m$ is 
equivalent to element-wise convergence.)
Using the property \eqref{eq:Rrho-adjoint-new}, 
we can write $I_{2,1}$ as
$$
I_{2,1} = \int_0^T \bigl\LL \beta,V - V_n\bigr\RR 
\pd_t \widetilde{\mathcal{R}}_\rho W_n \,\d t.
$$
We first recall two facts. One, using a maximal inequality 
(see, e.g., \cite[Proposition II.1.8]{Revuz:1999wi}), 
one obtains the moment estimate
$\Ex\sup_{t\in [0,T]} \abs{W_n(t)}^{q}
\lesssim_{T,{q}} 1$, for any finite ${q}$.
Therefore, by the a.s.~convergence \eqref{eq:Wn-assump_X} 
and Vitali's convergence theorem,
\begin{equation}\label{eq:Brown-cov}
	\Ex  \sup_{t \in [0,T]} 
	\abs{W(t)-W_n(t)}^{q}
	\ton 0, \quad \text{for any finite ${q}$}.
\end{equation}
Two, the product of 
weakly and strongly converging 
sequences converge weakly. 

Due to the strong convergence \eqref{eq:Brown-cov}, for 
fixed $\rho>0$, $\pd_t\widetilde{\mathcal{R}}_\rho W_n 
\ton \pd_t\widetilde{\mathcal{R}}_\rho W$ in 
$L^{q}\bigl(\Omega;C([0,T];\R^k)\bigr)$ 
for any finite ${q}$. Therefore, the product 
$Y  \pd_t\widetilde{\mathcal{R}}_\rho W_n$ 
converges strongly in 
$L^{p'}\bigl(\Omega \times [0,T];\R^k\bigr)$ 
for any $p'<2$. Choosing $p'$ as 
the H\"older conjugate of $p$, 
$p'=\frac{p  }{p-1 } < 2$, which is 
possible as $p >2$, the weak 
convergence \eqref{eq:V_n_to_V_wk2} 
implies that $\Ex \left[Y I_{2,1}\right]$ 
converges to zero as $n \to \infty$.

We will now examine the term $I_{2,2}$  
of \eqref{eq:I_2_decomp}. 
By \eqref{eq:V_n_to_V_wk2}, the weak 
convergence of $\bigl\LL \beta,V_n-V\bigr\RR$ 
in the temporal variable $t$ implies 
pointwise convergence of the temporally 
regularised object 
$$
\mathcal{R}_\rho
	\bigl\LL \beta,V_n-V\bigr\RR \tonweak  
	0 \quad \text{in $L^p(\Omega;\R^{m \times k})$, pointwise in $t$.}
	$$
Additionally, as noted in \eqref{eq:Brown-cov}, 
$W_n$ converges strongly to 
$W$ in $L^{q}(\Omega;C([0,T];\R^{m \times k}))$ 
for any finite ${q}$. Therefore, we 
can conclude that
\begin{align*}
	I_{2,2} = \mathcal{R}_\rho 
	\bigl\LL \beta,V_n-V\bigr\RR(T) 
	W_n(T)\ton 0,
\end{align*}
weakly in $L^{p - \kappa}(\Omega;\R^m)$
for any $0 < \kappa \le p - 2 $, and 
hence weakly in $L^2(\Omega;\R^m)$ since $p  >2$.

Recall the bound \eqref{eq:Lp_w_L2_t_H0-new}.
For any $2<r<p$, let $\bar{p}<\infty$ denote the
H\"older conjugate of $r/2$. By Young's convolution
inequality and H\"older's inequality,
\begin{equation}\label{eq:I_23_converge}
\begin{aligned}
	\Ex \abs{I_{2,3}}^2
	&\,\,\le \norm{\pd_t \mathcal{R}_\rho}_{L^1([0,T])}^2
	\bk{\Ex \norm{\LL \beta, V\RR}_{L^1([0,T];\R^{m \times k})}^r}^{2/r}
	\bk{\Ex \norm{W-W_n}_{C([0,T];\R^k)}^{2\bar{p}}}^{1/\bar{p}}
	\\ & 
	\overset{\eqref{eq:Lp_w_L2_t_H0-new}}{\lesssim_\rho}
	\bk{\Ex \norm{W-W_n}_{C([0,T];\R^k)}^{2\bar{p}}}^{1/\bar{p}}
	\ton 0.
\end{aligned}
\end{equation}

Similarly, using 
\begin{align*}
	\abs{\mathcal{R}_\rho \LL \beta, V\RR (T)}^{r} 
	& =\abs{\int_0^T \mathcal{R}_\rho(T-s)
	\LL \beta, V \RR(s)\,\d s}^{r}\\
&	 \le \norm{\mathcal{R}_\rho}_{L^\infty([0,T])}^{r}
	\norm{\LL \beta,V\RR}_{L^1([0,T];\R^{m \times k})}^r,
\end{align*}
we find that
\begin{equation}\label{eq:I_24_converge}
\begin{aligned}
	\Ex \abs{I_{2,4}}^2
	&\,\, \le \norm{\mathcal{R}_\rho}_{L^\infty([0,T])}^r 
 	\bk{\Ex \norm{\LL \beta, V \RR}_{L^1([0,T];\R^{m \times k})}^r}^{2/r}
 	\bk{\Ex \norm{W-W_n}_{C([0,T];\R^k)}^{2\bar{p}}}^{1/\bar{p}}
 	\\ & 
 	\overset{\eqref{eq:Lp_w_L2_t_H0-new}}{\lesssim_\rho}
	\bk{\Ex \norm{W-W_n}_{C([0,T];\R^k)}^{2\bar{p}}}^{1/\bar{p}}
	\ton 0.
\end{aligned}
\end{equation}
The strong convergences \eqref{eq:I_23_converge} 
and \eqref{eq:I_24_converge} 
imply that for $Y \in L^2(\Omega)$, 
$$
\Ex \big[ YI_{2,3} \big], \,\Ex \big[ YI_{2,4} \big] \ton 0.
$$
And in summary, our findings show that
$$
I_2(n,\rho) \tonweak 0 
\quad \mbox{in $L^2(\Omega;\R^m)$, for 
any fixed $\rho>0$}.
$$

\noindent \textit{5. Conclusion.}

Returning to \eqref{eq:I(n)-split}, 
testing against 
an arbitrary but fixed $Y \in L^2(\Omega)$, 
we can make 
$$
\Ex \big[Y I (n)\big]
=\Ex \Big[ Y\bk{I_1(\rho,n)
+I_2(n,\rho)+I_3(\rho)}\Big]
$$
arbitrarily small by first picking a 
sufficiently small $\rho$ (uniformly in $n$) 
and then selecting a sufficiently large $n$ 
(depending on the chosen $\rho$). More precisely, 
we pick some small $\rho_0$ such that 
for any $\rho\leq \rho_0$ the terms 
 $I_1(\rho,n)$ and $I_3(\rho)$ become small, 
uniformly in $n$. Then we select an 
integer $N=N(\rho_0, Y)$ such that for 
any $n\ge N$ the term
$I_2(n,\rho_0)$ becomes small.
\end{proof}

\begin{remark}\label{rem:counter_p}
We exhibit two examples to show the significance of the uniform 
$L^1$ temporal translation estimate condition 
\eqref{eq:Vn-assump_XI} and the optimality of $p > 2$. 

\vspace{.2cm}
\noindent
Example \ref{rem:counter_p}.1. 
Define the probability space to be $\Omega = [0,1]$ 
with the Lebesgue measure. 
Set $F_n = \sin(2 \pi n\omega)\sin(2 \pi nt)$, with $T = 1$. 
Assume that $F_n$ and $W_n$ are adapted 
to the same filtration. 
The set of functions $\{F_n\}$ is bounded, but 
no uniform temporal translation 
estimate \eqref{eq:Vn-assump_XI} is 
available. Though 
$$
F_n \tonweak 0 \quad
\text{in $L^p(\Omega \times [0,T])$ for any finite $p$},
$$ 
we have  
$$
\Ex \abs{\,\int_0^T F_n\,\d W_n}^2 
= \Ex \int_0^T  F_n^2\,\d t = \frac14.
$$

\vspace{.2cm}
\noindent
Example \ref{rem:counter_p}.2.
Ignoring the translation 
estimate \eqref{eq:Vn-assump_XI}, 
the range $p > 2$ is optimal. 
For each $n \ge 3$, let $f_n= \sqrt{n} \mathds{1}_{[0,1/n]}$, 
and let $W_n$ be a standard, 1-dimensional 
Brownian motion. We have $\norm{f_n}_{L^2([0,1])}^2 \lesssim 1$, 
and $f_n \rightharpoonup 0$ in $L^2([0,1])$. 
However, 
\begin{align*}
\int_0^1 f_n \,\d W_n = \sqrt{n} W_n(1/n).
\end{align*}
By the scaling property of the Brownian motion, 
this stochastic integral is distributed as $W_n(1)$, 
which by assumption tends to $W(1)$ a.s.--- and not zero. 
\end{remark}

\section{An example with a non-reflexive space \texorpdfstring{$X$}{X}}
\label{sec:nonreflexive}
This section presents a mild extension of 
Theorem \ref{thm:main_x1} to a non-reflexive 
Banach space $X$. We examine the convergence 
of stochastic integrals with integrands $V_n$ that 
take values in $X^{m \times k}=\bk{L^1(\T^d)}^{m \times k} = L^1(\T^d; \R^{m \times k})$, and with 
$\R^k$-valued Brownian motions $W_n$. 
These integrands satisfy uniform bounds in 
$L^p\bigl(\Omega \times [0,T]; L^1(\T^d; \R^{m \times k})\bigr)$, 
for some $p> 2$, and weakly converge 
in $L^r(\Omega \times [0,T]\times \T^d;\R^{m \times k})$, 
for some $r\ge 1$ (possibly $r < p$ or even $r < 2$). 
The proof of the convergence theorem 
below involves addressing the discrepancy 
between these two spaces (cf.~Theorem \ref{thm:main_x1}). 
This extension is useful in the analysis of 
the stochastic Camassa--Holm equation 
(see \cite{GHKP-inviscid,HKP-viscous}).

\begin{theorem}[$L^2(\Omega)$ convergence of 
	stochastic integrals, $X = L^1(\T^d)$]
\label{thm:main_x2}
Fix a probability space 
$\bigl(\Omega,\mathcal{F},\mathbb{P}\bigr)$. 
For $n \in \N$, let $W_n$ 
be an $\R^k$-valued Wiener process 
on a filtered probability space 
$\bigl(\Omega,\mathcal{F},
\{\mathcal{F}^n_t\}_{t \in [0,T]},\mathbb{P}\bigr)$, 
and $W$ be an $\R^k$-valued Wiener process on 
a filtered probability space $\bigl(\Omega,\mathcal{F},
\{\mathcal{F}_t\}_{t\in [0,T]},\mathbb{P}\bigr)$.

Fix $p>  2$ and $ r\ge 1 $. 
For $n \in \mathbb{N}$, let 
$V_n$ be an $L^1(\T^d;\R^{m \times k})$-valued 
$\{\mathcal{F}^n_t\}$-predictable 
process, and let $V$ be an $L^1(\T^d;\R^{m \times k})$-valued 
$\{\mathcal{F}_t\}$-predictable process, such that 
\begin{equation}\label{eq:Vn-assump2}
	\begin{split}
		& V_n\tonweak V \quad 
		\text{in $L^r(\Omega\times [0,T]\times \T^d;\R^{m \times k})$}, \\
		&\Ex \norm{V_n}_{L^p([0,T];L^1(\T^d;\R^{m \times k}))}^p \lesssim 1.
	\end{split}
\end{equation}
We further require that 
\begin{equation*}
	W_n \ton W \quad \text{in $C([0,T];\R^k)$, 
	$\mathbb{P}$--a.s.}
\end{equation*}

Fix $q>d/2$. Suppose that for a 
$\beta \in H^q(\T^d)$ the following mean 
$L^1$ temporal translation estimate holds:
\begin{align}\label{eq:Vn-assump_XIII}
\Ex \int_h^T \abs{\,
	\int_{\T^d} \beta(x) \big(V_n(t) - V_n(t - h)\big)\,\d x}\,\d t
	\xrightarrow{ h \downarrow 0} 0
	\quad\text{uniformly in $n$.}
\end{align}
Then for any $t \in [0,T]$, 
\begin{align}\label{eq:thm2_conv}
\int_0^t \int_{\T^d} \beta\, V_n \,\d x  \,\d W_n
\tonweak 
\int_0^t  \int_{\T^d} \beta\, V \,\d x \,\d W
\quad \text{in $L^2(\Omega;\R^m)$}.
\end{align}
\end{theorem}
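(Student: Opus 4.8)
The strategy is to reduce to Theorem~\ref{thm:main_x1} by pairing against $\beta$ so as to pass to finite-dimensional integrands, thereby escaping the non-reflexive space $L^1(\T^d)$. Since $q>d/2$, the Sobolev embedding $H^q(\T^d)\hookrightarrow L^\infty(\T^d)$ ensures $\beta\in L^\infty(\T^d)$, so $\beta$ is a bounded functional on $L^1(\T^d)$. Set
$$
\tilde V_n(t):=\int_{\T^d}\beta(x)\,V_n(t,x)\,\d x,\qquad
\tilde V(t):=\int_{\T^d}\beta(x)\,V(t,x)\,\d x,
$$
which are $\R^{m\times k}$-valued and inherit $\{\mathcal{F}^n_t\}$- resp.\ $\{\mathcal{F}_t\}$-predictability from $V_n$, $V$. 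The pointwise bound $\abs{\tilde V_n(t)}\le\norm{\beta}_{L^\infty}\norm{V_n(t)}_{L^1(\T^d;\R^{m\times k})}$ together with the second line of \eqref{eq:Vn-assump2} makes $\{\tilde V_n\}$ uniformly bounded in $L^p(\Omega\times[0,T];\R^{m\times k})$. Moreover \eqref{eq:Vn-assump_XIII} is exactly the mean $L^1$ translation estimate \eqref{eq:Vn-assump_XI} written for the process $\tilde V_n$ and the trivial functional $1\in\R^*=\R$, and $W_n\ton W$ a.s.\ is assumed. Hence, if I can prove $\tilde V_n\tonweak \tilde V$ in $L^p(\Omega\times[0,T];\R^{m\times k})$, then Theorem~\ref{thm:main_x1} applied with $X=\R$ and functional $1$ yields $\int_0^t\tilde V_n\,\d W_n\tonweak\int_0^t\tilde V\,\d W$, which is precisely \eqref{eq:thm2_conv}.

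The heart of the matter---and the place where the mismatch between the two spaces in \eqref{eq:Vn-assump2} must be resolved---is upgrading the hypothesised $L^r$ weak convergence (with possibly $r<p$, even $r<2$) into $L^p$ weak convergence of $\tilde V_n$. For any $\phi\in L^{r'}(\Omega\times[0,T])$, the separated function $\phi(\omega,t)\beta(x)$ lies in $L^{r'}(\Omega\times[0,T]\times\T^d)$ because $\beta\in L^\infty$ and $\T^d$ has finite measure; testing the first line of \eqref{eq:Vn-assump2} against it (componentwise in $\R^{m\times k}$) gives
$$
\Ex\int_0^T\phi\,\tilde V_n\,\d t\ton\Ex\int_0^T\phi\,\tilde V\,\d t
\qquad\text{for every }\phi\in L^{r'}(\Omega\times[0,T]).
$$
Now $L^p(\Omega\times[0,T])$ is reflexive since $1<p<\infty$, so the uniformly $L^p$-bounded family $\{\tilde V_n\}$ is weakly sequentially precompact. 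On the finite measure space $\Omega\times[0,T]$ the simple functions belong to $L^{r'}\cap L^{p'}$ and are dense in $L^{p'}$, and the displayed convergence pins down every weak limit point as $\tilde V$; therefore the full sequence converges, $\tilde V_n\tonweak\tilde V$ in $L^p(\Omega\times[0,T];\R^{m\times k})$, and weak lower semicontinuity of the norm confirms $\tilde V\in L^p$.

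With this weak $L^p$ convergence established, all hypotheses of Theorem~\ref{thm:main_x1} hold for the processes $\tilde V_n,\tilde V$ (taking $X=\R$ with the identity functional), and its conclusion is exactly \eqref{eq:thm2_conv}. I expect the density/reflexivity step of the second paragraph to be the only genuinely new obstacle: it is what reconciles the integrability exponent $r$ of the weak-convergence assumption with the exponent $p$ of the uniform bound, and it is the sole place where the non-reflexivity of $L^1(\T^d)$ enters---once one pairs with $\beta\in L^\infty$, the problem is transported to the reflexive scalar space where Theorem~\ref{thm:main_x1} applies verbatim.
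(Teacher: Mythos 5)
Your proof is correct, and it reaches the conclusion by a cleaner route than the paper's, although the one genuinely substantive step is the same in both. The paper does not reduce to Theorem~\ref{thm:main_x1} as a black box; instead it re-traces that proof, embeds $L^1(\T^d)\hookrightarrow H^{-q}(\T^d)$ to place $V_n$ in a reflexive space, extracts a weak limit $\overline V$ there, identifies $\overline V=V$ by testing against a common dense class, and only then deduces the bound \eqref{eq:V-in-prod-space-II} on $\int_{\T^d}\beta V\,\d x$ needed to rerun Steps~2--4. You bypass the $H^{-q}$ detour entirely by observing that the theorem only ever sees the scalar processes $\tilde V_n=\int_{\T^d}\beta V_n\,\d x$, so one may apply Theorem~\ref{thm:main_x1} with $X=\R$ and the functional $1$ once $\tilde V_n\tonweak\tilde V$ in $L^p(\Omega\times[0,T];\R^{m\times k})$ is known; the $L^p$ bound on $\tilde V$ then comes for free from weak lower semicontinuity rather than from \eqref{eq:V-in-prod-space-II}. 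The shared kernel is precisely your second paragraph: upgrading the $L^r$ weak convergence of \eqref{eq:Vn-assump2} to $L^p$ weak convergence of the paired quantities via the uniform bound, reflexivity of $L^p(\Omega\times[0,T])$, and uniqueness of weak limits tested on a class (you use simple functions) dense in $L^{p'}$ and contained in $L^{r'}$ --- this is the paper's item~(ii), which you spell out in more detail and which correctly handles all relative positions of $r$ and $p$. What your packaging buys is economy and reusability (no re-verification of Steps~1--4 of Theorem~\ref{thm:main_x1}); what the paper's buys is the explicit identification of $V$ itself as an element of $L^p\bigl(\Omega\times[0,T];H^{-q}(\T^d;\R^{m\times k})\bigr)$, a slightly stronger piece of information than the theorem's conclusion requires.
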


\begin{proof}
We will focus on the part of the proof 
of Theorem \ref{thm:main_x1} that does 
not apply in the case where $X=L^1(\T^d)$, 
which are Steps 3 and 4.
As in Theorem \ref{thm:main_x1}, we begin by observing 
the following bounds and convergences 
on $\int_{\T^d} \beta  V_n\,\d x$ and $\int_{\T^d}\beta  V\,\d x$: 
\begin{itemize}
\item[(i)] Using the embedding 
$H^q(\T^d) \hookrightarrow L^\infty(\T^d)$, 
we have the uniform bound 
\begin{equation}\label{eq:V_n_to_V_wk3}
\begin{aligned}
\Ex \norm{\int_{\T^d}\beta  V_n\,\d x}_{L^p_t}^p 
\le \norm{\beta}_{L^\infty_x}^p 
	\Ex \norm{ V_n}_{L^p_tL^1_x}^p \lesssim 1. 
\end{aligned}
\end{equation}
This implies the bound \eqref{eq:beta_V_pairbnd}. 
We shall establish the same bound for the limit 
$\int_{\T^d}\beta  V\,\d x$ in \eqref{eq:V-in-prod-space-II} below.

\item[(ii)] By the definition of weak limits, 
\eqref{eq:Vn-assump2} implies 
$\int_{\T^d}\beta  V_n \,\d x \tonweak \int_{\T^d}\beta  V\,\d x$ in 
$L^r(\Omega \times [0,T];\R^{m \times k})$. But \eqref{eq:V_n_to_V_wk3} 
implies 
that along a subsequence, 
\begin{align}\label{eq:V_n_to_V_wk4}
\int_{\T^d}\beta  V_n \,\d x \tonweak \overline{\Phi} 
\quad \text{ in $L^p(\Omega \times [0,T];\R^{m \times k})$}
\end{align}
to a limit $\overline{\Phi}$. By the uniqueness of 
weak limits, each subsequence in fact 
tends to the same limit 
$\overline{\Phi} = \int_{\T^d} \beta V\,\d x$, 
$\mathbb{P}\otimes \d t$-a.e.

\end{itemize}

With the regularisation operator $\mathcal{R}_\rho$ given by 
\eqref{eq:R_mollification}, it is 
again possible to split the difference
$$
I(n)=\int_0^T \int_{\T^d}  \beta \, V_n\,\d x \,\d W_n 
-\int_0^T \int_{\T^d} \beta\, V\,\d x \,\d W
$$
as in \eqref{eq:I(n)-split} into 
$I_1(n,\rho)+I_2(n,\rho)+I_3(\rho)$. 
Retaining that decompostion, we will 
compute the regularisation 
limit of $I_3(\rho)$ as $\rho \downarrow 0$, 
the $\rho \downarrow 0$ limit 
of $I_1(n,\rho)$ uniformly in $n$, 
and the limit of $I_2(n,\rho)$ as $n \uparrow \infty$.

\medskip
\noindent \textit{1. The strong regularisation 
limits of $I_1(n,\rho)$ and $I_3(\rho)$ as $\rho \downarrow 0$.}

First, we embed the non-reflexive space in which 
we have uniform bounds \eqref{eq:Vn-assump2}  
into a larger reflexive space:
\begin{equation}\label{eq:larger_H_embed}
	L^p(\Omega \times [0,T]; L^1(\T^d;\R^{m \times k})) 
	\hookrightarrow
	L^p(\Omega \times [0,T]; H^{-q}(\T^d;\R^{m \times k})),
\end{equation}
for a sufficiently large $q$. Since $V_n$ 
is uniformly bounded in the space 
$L^p(\Omega \times [0,T] ; L^1(\T^d;\R^{m \times k}))$, 
we deduce that
\begin{equation}\label{eq:Vn-H-q-bound}
	\Ex \int_0^T  \sup_{\norm{\varphi}_{ H^q(\T^d;\R^{m \times k})} = 1} 
	\abs{\,\int_{\T^d} \varphi : V_n\,\d x }^p \,\d t 
	\le \Ex \int_0^T \norm{V_n}_{L^1(\T^d;\R^{m \times k})}^p \,\d t 
	\lesssim  1,
\end{equation}
where we have used the Sobolev embedding 
$\norm{\varphi}_{L^\infty(\T^d;\R^k)} 
	\lesssim \norm{\varphi}_{H^q(\T^d;\R^{m \times k})}=1$ 
for any $q > d/2$. We also used the colon to denote 
the scalar product between the matrix-valued 
objects $\varphi$ and $V_n$. And so  $V_n$ is uniformly bounded in 
$L^p\bigl(\Omega\times [0,T];H^{-q}(\T^d;\R^{m \times k})\bigr)$.
As this is a reflexive Banach space, we can 
use a weak compactness argument to assume that
\begin{equation*}
	V_n \tonweak \overline{V}\quad 
	\mbox{in $L^p\bigl(\Omega \times [0,T]; 
		H^{-q}(\T^d;\R^{m \times k})\bigr)$},
\end{equation*} 
for some limit $\overline{V} \in L^p_{\omega, t}H^{-q}_x$.
Since $L^\infty(\Omega ;C^\infty([0,T] \times \T^d;\R^{m \times k}))$ 
is dense in the dual space of both 
$L^p_{\omega, t}H^{-q}_x$ and $L^r_{\omega, t, x}$, 
by the weak limit assumption \eqref{eq:Vn-assump2}, 
we find that  $\overline{V}=V$, 
$\mathbb{P}\otimes \d t \otimes \d x$--a.e.

As we now know that the limit $V$ 
of \eqref{eq:Vn-assump2} satisfies 
\begin{equation*}
	V\in L^p\bigl(\Omega \times [0,T]; H^{-q}(\T^d;\R^{m \times k})\bigr), 
\end{equation*}
it follows that (see also \eqref{eq:beta_V_pairbnd} 
and \eqref{eq:Lp_w_L2_t_H0-new}),
\begin{equation}\label{eq:V-in-prod-space-II}
	\begin{aligned}
		&\Ex \int_0^T \abs{\,\int_{\T^d} \beta\, V\,\d x}^p\, \d t
		\le \norm{\beta}_{H^q_x}^p
		\Ex \norm{V}_{L^p_tH^{-q}_x}^p
		\lesssim 1,
	\end{aligned}
\end{equation} 
so in particular, $\int_{\T^d} \beta\, V\,\d x 
\in L^2(\Omega\times [0,T];\R^{m \times k})$
and hence $\in L^2([0,T];\R^{m \times k})$ almost surely. 
Along with \eqref{eq:conv-prop}, we can 
conclude that
\begin{align*}
	\int_0^T \abs{\, \int_{\T^d} \beta\, V \, \d x
	-\mathcal{R}_\rho \int_{\T^d} \beta\, V \,\d x}^2
	\,\d t \torho 0, 
	\quad \text{$\mathbb{P}$--a.s.}
\end{align*}
Moreover, proceeding as in 
\eqref{eq:V_nR_rho_highermoment},
\begin{equation*}
	\int_0^T \abs{\, \int_{\T^d} \beta\, V \, \d x
	-\mathcal{R}_\rho \int_{\T^d} \beta\, V \,\d x}^2\,\d t
	\lesssim \int_0^T \abs{\, \int_{\T^d}\beta\, V \,\d x}^2\,\d t 
	\overset{\eqref{eq:V-in-prod-space-II}}{\in} 
	L^1(\Omega).
\end{equation*}
Therfore, by Lebesgue's dominated convergence theorem,
\begin{align*}
	\Ex \int_0^T \abs{\, \int_{\T^d} \beta\, V\,\d x
	-\mathcal{R}_\rho \int_{\T^d} \beta\, V\,\d x}^2\,\d t
	\torho 0,
\end{align*}
and, via the It\^o isometry, we conclude that 
$I_3(\rho)\torho 0$ in $L^2(\Omega;\R^m)$.

The bounds \eqref{eq:V_n_to_V_wk3} 
and \eqref{eq:V-in-prod-space-II} 
also allow us to get $I_1(n,\rho)\torho 0$ 
in $L^2(\Omega;\R^m)$, uniformly in $n$,  
as in Step 2 of Theorem \ref{thm:main_x1}.

\medskip
\noindent \textit{2. The weak   
limit of $I_2(n,\rho)$ as $n \uparrow \infty$.}

In Step 4 of the proof of Theorem \ref{thm:main_x1}, 
where the convergence of $I_2$ was analysed, 
we only used the bound \eqref{eq:Lp_w_L2_t_H0-new} 
and the weak convergence \eqref{eq:V_n_to_V_wk2} 
for the integrals $\int_{\T^d} \beta V_n\,\d x$ 
and $\int_{\T^d} \beta V\,\d x$. 
These are respectively the conditions 
\eqref{eq:V-in-prod-space-II} 
and \eqref{eq:V_n_to_V_wk4}. And we get 
$I_2(n,\rho) \tonweak 0$ in $L^2(\Omega;\R^m)$ for 
any fixed $\rho>0$, thereby allowing us 
to conclude \eqref{eq:thm2_conv} as in 
Step 5 of Theorem \ref{thm:main_x1}.
\end{proof}

\section{Assuming the almost sure convergence 
\texorpdfstring{$V_n \to V$}{Vn to V}}\label{sec:corollaries}

In this section, we present two corollaries,  
strengthening the weak convergence $V_n$ to 
$V$ in all three variables with the 
assumption of weak convergence a.s. This is 
a situation that often arise after an application 
of the Skorokhod representation 
theorem, in which convergence in law is 
converted into a.s.~convergence on a  
different probability space. 
We shall discover that under analogous assumptions, 
the resultant convergence 
of stochastic integrals is {\em strong} in $L^2(\Omega;\R^m)$.

Consider therefore the 
following assumption instead of \eqref{eq:V_n_to_V_wk}:
\begin{equation}\label{eq:Vn-assump_XII}
\begin{aligned}
&	V_n \tonweak V \quad 
	\mbox{in $L^p( [0,T];X^{m \times k})$ a.s.} ,\\
&	\Ex \norm{V_n}_{L^p([0,T];X^{m \times k})}^p \lesssim 1, 
\end{aligned}
\end{equation}
for some $p>2$.

We first observe that the bound \eqref{eq:beta_V_pairbnd}
for $\LL \beta, V_n\RR$ in $L^p(\Omega \times [0,T];\R^{m \times k})$  
still holds using only the boundedness 
condition in \eqref{eq:Vn-assump_XII}, i.e., 
\begin{align}\label{eq:V_n_p_alpha2}
\Ex \norm{\LL \beta, V_n\RR}_{L^{p}([0,T];\R^{m \times k})}^{p} \lesssim 1.
\end{align}

For any $\beta \in X$ and $\zeta \in L^{p'}([0,T];\R^{m\times k})$,
the weak convergence assumption in 
\eqref{eq:Vn-assump_XII} implies that almost surely 
\begin{align}\label{eq:V_npaired_as_weak}
\int_0^T \zeta :\LL \beta, V_n\RR\,\d t 
\to \int_0^T \zeta: \LL \beta, V\RR\,\d t,
\end{align}
where we use the colon to denote 
the scalar product between the matrix-valued 
objects $\zeta$ and $\LL \beta, V_n\RR$. 
From the uniform bound  in 
\eqref{eq:Vn-assump_XII}, we have
$$
\Ex \abs{\int_0^T \zeta : \LL \beta, V_n\RR\,\d t }^p 
\le \norm{\beta}_X^p 
	\norm{\zeta}_{L^{p'}([0,T];\R^{m \times k})}^p
	\Ex \norm{V_n}_{L^p([0,T];X^{m \times k})}^p 
	\lesssim 1.
$$
Almost sure convergence and a uniform bound implies 
the weak convergence \eqref{eq:V_npaired_as_weak} 
(\cite[Chapter 8, Theorem 12]{royden2010real}).
The arbitrariness of $\zeta$ and $\beta$ 
in turn recovers the condition \eqref{eq:V_n_to_V_wk} that 
$V_n \rightharpoonup V$ in 
$L^p\bigl(\Omega \times [0,T];X^{m \times k})\bigr)$, 
so we also have 
\begin{align}\label{eq:Lp_w_L2_t_H0-new2}
\Ex\norm{\LL \beta,V\RR}_{L^{p}([0,T];\R^{m \times k})}^{p} \lesssim 1,  
\end{align}
which is the bound \eqref{eq:Lp_w_L2_t_H0-new}.
Therefore, Steps 2 and 3 of the proof of Theorem \ref{thm:main_x1}, 
demonstrating the (strong) $L^2(\Omega;\R^m)$ 
convergence of the integrals $I_1(n, \rho)$ and 
$I_3(\rho)$ of \eqref{eq:I(n)-split},  
carry through under \eqref{eq:V_n_p_alpha2} and 
\eqref{eq:Lp_w_L2_t_H0-new2}. 

We now show that  the convergence of $I_2(n,\rho)$ 
(Step 4 of the proof of Theorem 
\ref{thm:main_x1}) can be improved in the following lemma:
\begin{lem}\label{thm:I_2_as_version}
Suppose $p >2$. Let $V_n$, $V$, $W_n$, 
$W$,  $X$, and $\beta$
satisfy the assumptions of Theorem \ref{thm:main_x1}, 
with the exception that \eqref{eq:V_n_to_V_wk} be 
replaced by \eqref{eq:Vn-assump_XII}. 
Let $I_2(n,\rho)$ be given by \eqref{eq:I(n)-split}.
Then 
$$
I_2(n,\rho) \ton 0 
\quad \mbox{in $L^2(\Omega;\R^m)$, for 
any fixed $\rho>0$}.
$$
\end{lem}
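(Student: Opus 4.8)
The plan is to revisit the decomposition $I_2(n,\rho) = I_{2,1}+I_{2,2}+I_{2,3}+I_{2,4}$ from \eqref{eq:I_2_decomp} and upgrade the two terms $I_{2,1}$ and $I_{2,2}$ — which converged only weakly in Theorem \ref{thm:main_x1} — to \emph{strong} $L^2(\Omega;\R^m)$ convergence, while observing that $I_{2,3}$ and $I_{2,4}$ already converge strongly. Indeed, the estimates \eqref{eq:I_23_converge} and \eqref{eq:I_24_converge} for $I_{2,3}$ and $I_{2,4}$ use only the uniform bound \eqref{eq:Lp_w_L2_t_H0-new2} on $\LL \beta, V\RR$ together with the strong Brownian convergence \eqref{eq:Brown-cov}, so they carry over verbatim to give $I_{2,3}, I_{2,4} \to 0$ in $L^2(\Omega;\R^m)$.

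For the boundary term $I_{2,2} = \mathcal{R}_\rho\LL \beta, V_n - V\RR(T)\,W_n(T)$, the key new input is that the a.s.\ weak convergence in \eqref{eq:Vn-assump_XII} gives, for a.e.\ fixed $\omega$, the convergence $\LL \beta, V_n(\omega,\cdot)\RR \rightharpoonup \LL \beta, V(\omega,\cdot)\RR$ in $L^p([0,T];\R^{m\times k})$. Testing against the fixed function $s\mapsto \mathcal{R}_\rho(T-s) \in L^{p'}([0,T])$ then yields $\mathcal{R}_\rho\LL \beta, V_n - V\RR(T) \to 0$ almost surely, and combining with $W_n(T)\to W(T)$ a.s.\ gives $I_{2,2}\to 0$ a.s. To promote this to $L^2(\Omega)$ I would invoke Vitali's theorem: a Hölder estimate in $\omega$ bounds $I_{2,2}$ in $L^s(\Omega)$ for some $s>2$, using the $L^p$ bound \eqref{eq:V_n_p_alpha2} (and \eqref{eq:Lp_w_L2_t_H0-new2}) on the first factor together with the arbitrary-moment bound $\Ex\sup_t \abs{W_n(t)}^q \lesssim 1$ on the second; this uniform $L^s$, $s>2$, bound furnishes the uniform integrability of $\{\abs{I_{2,2}}^2\}$.

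The term $I_{2,1} = \int_0^T \LL \beta, V - V_n\RR\,\pd_t\widetilde{\mathcal{R}}_\rho W_n\,\d t$, rewritten via \eqref{eq:Rrho-adjoint-new}, I would split as $I_{2,1} = I_{2,1}^a + I_{2,1}^b$ by replacing $W_n$ with $W$ and with $W_n-W$. For $I_{2,1}^a = \int_0^T \LL\beta, V - V_n\RR\,\pd_t\widetilde{\mathcal{R}}_\rho W\,\d t$, the integrand $\pd_t\widetilde{\mathcal{R}}_\rho W(\omega,\cdot)$ is, for fixed $\omega$, smooth and hence an $L^{p'}([0,T])$ function, so the a.s.\ weak convergence again gives $I_{2,1}^a \to 0$ a.s.; the same Hölder/moment argument as above produces a uniform $L^s$, $s>2$, bound and Vitali upgrades this to $L^2(\Omega)$. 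The remaining piece $I_{2,1}^b = \int_0^T \LL\beta, V - V_n\RR\,\pd_t\widetilde{\mathcal{R}}_\rho(W_n - W)\,\d t$ does not use the a.s.\ structure: since $\norm{\pd_t\widetilde{\mathcal{R}}_\rho(W_n - W)}_{L^{p'}([0,T])} \lesssim_\rho \norm{W_n - W}_{C([0,T])}$, a Hölder estimate in $\omega$ with exponents $p/2$ and $p/(p-2)$ bounds $\Ex\abs{I_{2,1}^b}^2$ by the uniformly bounded $L^p_t$-norm of $\LL\beta, V-V_n\RR$ times $\Ex\norm{W_n - W}_{C([0,T])}^{2p/(p-2)} \to 0$, exactly as in \eqref{eq:I_23_converge}.

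Assembling the four limits gives $I_2(n,\rho) \to 0$ in $L^2(\Omega;\R^m)$ for each fixed $\rho>0$. The main obstacle is the passage from the a.s.\ (pointwise-in-$\omega$) convergence of $I_{2,1}^a$ and $I_{2,2}$ to strong $L^2(\Omega)$ convergence; this hinges on extracting a uniform bound in some $L^s(\Omega)$ with $s>2$, which is precisely where the hypothesis $p>2$ enters and which delivers the uniform integrability needed for Vitali's theorem.
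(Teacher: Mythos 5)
Your proposal is correct and follows essentially the same route as the paper: the same decomposition \eqref{eq:I_2_decomp}, the same splitting of $I_{2,1}$ into a $W$-part handled by the a.s.\ weak convergence and a $(W_n-W)$-part handled by the strong Brownian convergence, and the same upgrade from a.s.\ convergence to $L^2(\Omega)$ via a uniform moment bound of order $>2$ and Vitali's theorem. The only cosmetic difference is that the paper obtains the required $p$th moment bound for all of $I_2$ at once through the It\^o isometry (see \eqref{eq:5a_normbound2}), whereas you bound each piece separately by H\"older; both are valid.
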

\begin{proof}
We reprise the 
decomposition \eqref{eq:I_2_decomp} 
for $I_2$ into $I_{2,1}$ to $I_{2,4}$. 
We further recall that by \eqref{eq:I_23_converge}
and \eqref{eq:I_24_converge}, $I_{2,3}, I_{2,4} \ton 0$ 
in $L^2(\Omega;\R^m)$ already, using only  
\eqref{eq:V_n_p_alpha2} and 
\eqref{eq:Lp_w_L2_t_H0-new2}. 
This leaves us with 
arguing for the strong convergences of 
$I_{2,1}$ and $I_{2,2}$. We do so 
by arguing for their a.s.~convergence 
and providing a $p$th moment bound. 
Vitali's convergence theorem then implies 
convergence in $L^2$ as long 
as $p > 2$, which we assume. 

We can use \eqref{eq:Rrho-adjoint-new} to obtain
\begin{align*}
	I_{2,1} & =-\int_0^T \bigl \LL \beta, V_n - V \bigr\RR
	\, \pd_t \widetilde{\mathcal{R}}_\rho W_n\,\d t
	= -\int_0^T \bigl \LL \beta,V_n-V \bigr\RR
	\, \pd_t \widetilde{\mathcal{R}}_\rho  W\,\d t
	\\ & \qquad 
	- \int_0^T \bigl \LL \beta,V_n-V\bigr\RR
	 \left(\pd_t \widetilde{\mathcal{R}}_\rho W_n
	-\pd_t \widetilde{\mathcal{R}}_\rho  W\right)\,\d t
	=: I_{2,1}^{(1)}+I_{2,1}^{(2)}.
\end{align*}
By the a.s.~weak convergence $V_n\weak V$ 
in $L^p([0,T];X^{m \times k})$ (see \eqref{eq:Vn-assump_XII}), 
and thus (weakly) in $L^1([0,T];X^{m \times k})$, we conclude 
that 
\begin{align}\label{eq:I_21_22_require}
\bigl\LL \beta,V_n-V \bigr\RR\weak 0
\quad \text{
a.s.~in $L^1([0,T];\R^{m \times k})$}.
\end{align} At the same time, 
$\pd_t \widetilde{\mathcal{R}}_\rho W\in L^\infty([0,T];\R^k)$ a.s., 
so that $I_{2,1}^{(1)}\ton 0$. 

On the other hand, 
given \eqref{eq:Wn-assump_X}, 
$\pd_t \widetilde{\mathcal{R}}_\rho W_n$ 
converges a.s.~to $\pd_t \widetilde{\mathcal{R}}_\rho W$ 
strongly in $C([0,T];\R^k)$, while 
$\bigl \LL \beta,V_n-V\bigr\RR$ 
are a.s.~bounded in $L^1([0,T];\R^{m \times k})$, uniformly in $n$.
This implies that $I_{2,1}^{(2)}\ton 0$. 
Therefore, $I_{2,1}$ converges a.s.~to $0$ 
as $n \uparrow \infty$ for a fixed $\rho>0$. 

Next we show that
\begin{align*}
I_{2,2} = \mathcal{R}_\rho
	\bigl \LL \beta,V_n-V \bigr\RR(T)W_n(T) \to 0 \quad \text{a.s.}
\end{align*}
By \eqref{eq:I_21_22_require}, $\mathcal{R}_\rho 
	\LL \beta, V_n - V\RR(t) \rightharpoonup 0$ 
a.s.~pointwise in $t$. 
Meanwhile $W_n(T) \to W$ a.s.~by assumption. 
Therefore $I_{2,2} \to 0$ a.s.

Since $I_{2,1} + I_{2,2} \to 0$ a.s., it converges 
weakly in  $L^p$ if it is bounded in $L^p$ for $p > 2$. 
We already know that  $I_{2,3} + I_{2,4} \to 0$ 
strongly in $L^2(\Omega;\R^m)$. Therefore, 
it is necessary only to produce a $p$th moment 
bound ($p > 2$) for the entirety of $I_2$. 
The It\^o isometry and  the convolution 
inequality \eqref{eq:conv-prop} supply us with 
the promised $p$th moment bound: 
\begin{equation}\label{eq:5a_normbound2}
	\begin{aligned}
	&	\Ex \big|I_2(n,\rho)\big|^p \\
	&	\lesssim \Ex \abs{\,\int_0^T \mathcal{R}_\rho 
		\bigl \LL \beta, V_n \bigr \RR\, \d W_n}^p
		+\Ex \abs{\,\int_0^T \mathcal{R}_\rho 
		\LL \beta, V\RR\, \d W}^p
		\\ &
		=\Ex \bk{\int_0^T\norm{\mathcal{R}_\rho 
		\bigl \LL \beta, V_n \bigr \RR}^2 \d t}^{p/2}
		+\Ex \bk{\int_0^T \norm{\mathcal{R}_\rho 
		\LL \beta, V\RR}^2 \d t}^{p/2}
		\\ &
		\lesssim \norm{\mathcal{R}_\rho}_{L^1_t}^{p}
		\bk{\Ex\norm{\bigl\LL \beta, V_n \bigr\RR}_{L^2([0,T];\R^{m \times k})}^p
		+\Ex\norm{\LL \beta, V \RR}_{L^2([0,T];\R^{m \times k})}^p }
		 \overset{\eqref{eq:V_n_p_alpha2},
		\eqref{eq:Lp_w_L2_t_H0-new2}}{\lesssim} 1.
	\end{aligned}
\end{equation}
This concludes the proof of the lemma.
\end{proof}

The following corollary of Theorem 
\ref{thm:main_x1} then follows from 
Lemma \ref{thm:I_2_as_version}.
\begin{corollary}\label{thm:cor1}
Suppose $p >2$. Let $V_n$, $V$, $W_n$, 
$W$,  $X$, and $\beta$
satisfy the assumptions of Theorem \ref{thm:main_x1}, 
with the exception that \eqref{eq:V_n_to_V_wk} be 
replaced by \eqref{eq:Vn-assump_XII}. 

Then for any $t \in [0,T]$,
\begin{align}\label{eq:strong_convg1}
\int_0^t \bigl\LL\beta,V_n\bigr\RR\,\d W_n
\ton 
\int_0^t \LL \beta,V \RR \,\d W
\quad \text{in $L^2(\Omega;\R^m)$}.
\end{align}
\end{corollary}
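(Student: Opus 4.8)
The plan is to reuse verbatim the decomposition \eqref{eq:I(n)-split} from the proof of Theorem \ref{thm:main_x1},
$$
\int_0^T \LL \beta, V_n\RR\,\d W_n - \int_0^T \LL \beta, V\RR\,\d W = I_1(\rho,n) + I_2(n,\rho) + I_3(\rho),
$$
but now to exploit the fact that, under assumption \eqref{eq:Vn-assump_XII}, \emph{all three} pieces converge \emph{strongly} in $L^2(\Omega;\R^m)$. Consequently, in place of the weak testing against an arbitrary $Y \in L^2(\Omega)$ used in Step 5 of Theorem \ref{thm:main_x1}, I would work directly with the $L^2(\Omega;\R^m)$ norm and a triangle inequality. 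As in that proof, it suffices to treat $t=T$, each step being valid for general $t\in[0,T]$ with $T$ replaced by $t$.

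First I would record that the three required convergences are already in hand. As observed before the statement of Lemma \ref{thm:I_2_as_version}, the hypothesis \eqref{eq:Vn-assump_XII} yields the uniform bounds \eqref{eq:V_n_p_alpha2} and \eqref{eq:Lp_w_L2_t_H0-new2}, which are precisely the bounds \eqref{eq:beta_V_pairbnd} and \eqref{eq:Lp_w_L2_t_H0-new} on which Steps 2 and 3 of Theorem \ref{thm:main_x1} rest. Hence those steps carry through unchanged, giving $I_1(\rho,n)\torho 0$ in $L^2(\Omega;\R^m)$ uniformly in $n$, and $I_3(\rho)\torho 0$ in $L^2(\Omega;\R^m)$. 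The crucial upgrade is supplied by Lemma \ref{thm:I_2_as_version}: under \eqref{eq:Vn-assump_XII} one has $I_2(n,\rho)\ton 0$ \emph{strongly} in $L^2(\Omega;\R^m)$ for each fixed $\rho>0$, rather than merely weakly.

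The conclusion then follows from a standard $\varepsilon/3$ argument. Given $\varepsilon>0$, I would first choose $\rho_0>0$, using Steps 2 and 3, so small that $\norm{I_1(\rho_0,n)}_{L^2(\Omega;\R^m)} + \norm{I_3(\rho_0)}_{L^2(\Omega;\R^m)} < 2\varepsilon/3$, the $I_1$ estimate holding uniformly in $n$. Fixing this $\rho_0$, Lemma \ref{thm:I_2_as_version} provides an $N=N(\rho_0,\varepsilon)$ such that $\norm{I_2(n,\rho_0)}_{L^2(\Omega;\R^m)} < \varepsilon/3$ for all $n \ge N$. The triangle inequality then gives $\norm{I(n)}_{L^2(\Omega;\R^m)} < \varepsilon$ for $n \ge N$, which is the claimed strong convergence \eqref{eq:strong_convg1}. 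The substantive work has already been carried out in Lemma \ref{thm:I_2_as_version}; the only genuine obstacle was upgrading the weak convergence of $I_2$ to strong convergence there (via the a.s.\ convergence of $I_{2,1},I_{2,2}$ together with the uniform $p$th moment bound \eqref{eq:5a_normbound2} and Vitali's theorem), and once that strengthening is available, the corollary is a routine assembly of the three strong limits through the triangle inequality.
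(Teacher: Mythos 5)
Your proposal is correct and follows exactly the paper's route: the discussion preceding Lemma \ref{thm:I_2_as_version} already establishes the bounds \eqref{eq:V_n_p_alpha2} and \eqref{eq:Lp_w_L2_t_H0-new2} so that Steps 2 and 3 of Theorem \ref{thm:main_x1} give strong convergence of $I_1$ and $I_3$, and Lemma \ref{thm:I_2_as_version} supplies the strong convergence of $I_2$, after which the triangle-inequality assembly is immediate. The paper leaves this assembly implicit, so your write-up is simply a slightly more explicit version of the same argument.
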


Motivated as in Theorem \ref{thm:main_x2}, 
we can trade spatial integrability 
for temporal integrability in the uniform bound:

\begin{corollary}\label{thm:cor2}
Suppose $p>2$ and $1 \le r<p$. 
Let $V_n$, $V$, $W_n$, $W$, and $\beta$  
satisfy the assumptions of 
Theorem \ref{thm:main_x2}, 
with the exception that 
\eqref{eq:Vn-assump2} be replaced by
\begin{equation}\label{eq:Vn-assump3}
\begin{aligned}
	& V_n\tonweak V \quad 
	\text{in $L^r([0,T] \times \T^d;\R^{m \times k})$, a.s.},
	\\
	& \Ex \norm{V_n}_{L^p([0,T];L^1(\T^d;\R^{m \times k}))}^p 
	\lesssim 1.
\end{aligned}
\end{equation}
Then for any $t \in [0,T]$,
$$
\int_0^t \int_{\T^d} \beta\, V_n \,\d x  \,\d W_n
\ton
\int_0^t  \int_{\T^d} \beta\, V \,\d x \,\d W
\quad \text{in $L^2(\Omega;\R^m)$}.
$$
\end{corollary}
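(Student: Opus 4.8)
The plan is to realize Corollary \ref{thm:cor2} as the confluence of the two preceding refinements: the passage to the non-reflexive spatial space $X=L^1(\T^d)$ effected in Theorem \ref{thm:main_x2}, and the upgrade from weak to strong $L^2(\Omega)$ convergence under almost sure weak convergence effected in Lemma \ref{thm:I_2_as_version} and Corollary \ref{thm:cor1}. Writing $\Phi_n:=\int_{\T^d}\beta\, V_n\,\d x$ and $\Phi:=\int_{\T^d}\beta\, V\,\d x$, the argument reduces to verifying, for these $\R^{m\times k}$-valued processes, the bounds and convergences that drove the two earlier proofs, after which the decomposition $I(n)=I_1(n,\rho)+I_2(n,\rho)+I_3(\rho)$ of \eqref{eq:I(n)-split} applies verbatim.

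First I would record the uniform moment bound: using the Sobolev embedding $H^q(\T^d)\hookrightarrow L^\infty(\T^d)$ (valid since $q>d/2$) exactly as in \eqref{eq:V_n_to_V_wk3}, the boundedness hypothesis in \eqref{eq:Vn-assump3} yields $\Ex\norm{\Phi_n}_{L^p([0,T])}^p\lesssim 1$, the analogue of \eqref{eq:beta_V_pairbnd}. Next, the a.s.\ weak $L^r([0,T]\times\T^d)$ convergence in \eqref{eq:Vn-assump3}, tested against product functions $\zeta(t)\beta(x)$ (admissible since $\beta\in L^\infty_x\subseteq L^{r'}_x$), immediately gives the a.s.\ weak $L^r_t$, hence weak $L^1([0,T])$, convergence $\Phi_n\rightharpoonup\Phi$. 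I then need to transfer the moment bound to $\Phi$ and identify the weak limit in $L^p(\Omega\times[0,T])$. Since $L^p(\Omega\times[0,T];\R^{m\times k})$ is reflexive, a subsequence converges weakly there to some $\overline{\Phi}$; testing against $Y(\omega)\zeta(t)$ with $Y\in L^\infty(\Omega)$, $\zeta\in L^\infty([0,T])$, and comparing this $L^p$-weak limit with the a.s.\ weak $L^r_t$ limit (promoted to convergence in expectation by Vitali's theorem using the $p$th moment bound) identifies $\overline{\Phi}=\Phi$. Uniqueness of the limit then gives full weak convergence $\Phi_n\rightharpoonup\Phi$ in $L^p(\Omega\times[0,T])$ and, by weak lower semicontinuity, $\Ex\norm{\Phi}_{L^p([0,T])}^p\lesssim 1$ --- the analogues of \eqref{eq:V_n_to_V_wk4} and \eqref{eq:V-in-prod-space-II}. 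This is the step I expect to be the main obstacle, since it is exactly here that the non-reflexive spatial bound in $L^p_tL^1_x$ must be reconciled with an almost sure weak convergence taking place in the possibly weaker space $L^r$ (with $r<p$, even $r<2$); every later step merely consumes its output.

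With these in hand, the strong regularisation limits $I_1(n,\rho)\to 0$ (uniformly in $n$) and $I_3(\rho)\to 0$ in $L^2(\Omega;\R^m)$ as $\rho\downarrow 0$ follow as in Step 2 of Theorem \ref{thm:main_x1} and Step 1 of Theorem \ref{thm:main_x2}, invoking the translation estimate \eqref{eq:Vn-assump_XIII} for $I_1$ and the dominated convergence argument of \eqref{eq:V_nR_rho_highermoment} for $I_3$. The only genuinely new point is $I_2$, for which I would follow Lemma \ref{thm:I_2_as_version} with $\Phi_n,\Phi$ in place of $\LL\beta,V_n\RR,\LL\beta,V\RR$: the a.s.\ weak $L^1([0,T])$ convergence $\Phi_n-\Phi\rightharpoonup 0$ together with a.s.\ uniform $L^1_t$ boundedness (a weakly convergent sequence is bounded) are precisely the ingredients \eqref{eq:I_21_22_require} that force $I_{2,1},I_{2,2}\to 0$ a.s., while $I_{2,3},I_{2,4}\to 0$ in $L^2$ already follow from the moment bounds via \eqref{eq:I_23_converge}--\eqref{eq:I_24_converge}. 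The $p$th moment bound \eqref{eq:5a_normbound2}, which uses only the uniform bounds just established, then lets Vitali's theorem promote the a.s.\ convergence of $I_2$ to strong convergence in $L^2(\Omega;\R^m)$ (using $p>2$). Combining as in Step 5 of Theorem \ref{thm:main_x1} --- picking $\rho$ small to control $I_1,I_3$ uniformly in $n$, then $n$ large to control $I_2$ --- and noting that every piece now converges strongly, delivers the claimed strong $L^2(\Omega;\R^m)$ convergence.
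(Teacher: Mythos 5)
Your proposal is correct and follows essentially the same route as the paper: record the uniform $L^p(\Omega\times[0,T])$ bound on $\int_{\T^d}\beta\,V_n\,\d x$, identify its weak $L^p_{\omega,t}$ limit with $\int_{\T^d}\beta\,V\,\d x$ by reconciling the a.s.\ weak convergence with the moment bound via Vitali's theorem, and then reuse Step 1 of the proof of Theorem \ref{thm:main_x2} for $I_1,I_3$ and Lemma \ref{thm:I_2_as_version} for $I_2$. The only cosmetic difference is that the paper carries out the limit identification at the level of $V_n$ in the reflexive space $L^p\bigl(\Omega\times[0,T];H^{-q}(\T^d)\bigr)$ via the embedding $L^1\hookrightarrow H^{-q}$, whereas you work directly with the scalar processes $\int_{\T^d}\beta\,V_n\,\d x$ in $L^p(\Omega\times[0,T])$; both hinge on the same Vitali argument.
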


\begin{proof}
We first observe that using the bound in 
\eqref{eq:Vn-assump3}, we retain the uniform bound 
\eqref{eq:beta_V_pairbnd}, or equivalently 
\eqref{eq:V_n_to_V_wk3}, on $\int_{\T^d} \beta V_n\,\d x$ 
in $L^p(\Omega \times [0,T];\R^{m \times k})$.We will next 
show that $V \in 
	L^p(\Omega \times [0,T] ; H^{-q}(\T^d;\R^{m \times k}))$, 
	and therefore
\begin{equation}\label{eq:V-in-prod-space-III}
	\begin{aligned}
		&\Ex \int_0^T \abs{\,\int_{\T^d} \beta\, V\,\d x}^p\, \d t
		\le \norm{\beta}_{H^q_x}^p
		\Ex \norm{V}_{L^p_tH^{-q}_x}^p
		\lesssim 1.
	\end{aligned}
\end{equation} 
As in \eqref{eq:Vn-H-q-bound},
the uniform bound $V_n \in_b 
	L^p(\Omega \times [0,T] ; L^1(\T^d;\R^{m \times k}))$ and 
the inclusion \eqref{eq:larger_H_embed} implies 
a uniform bound $V_n \in_b 
	L^p(\Omega \times [0,T] ; H^{-q}(\T^d;\R^{m \times k}))$ 
	(where ``$\in_b$" refers to uniformly bounded inclusion). 
Therefore, we may assume that
\begin{equation}\label{eq:weaklim_H-q2}
	V_n \tonweak \overline{V}\quad 
	\mbox{in $L^p\bigl(\Omega 
		\times [0,T]; H^{-q}(\T^d;\R^{m \times k})\bigr)$}.
\end{equation} 

We now argue that $\overline{V}=V$, 
$\mathbb{P}\otimes \d t \otimes \d x$--a.e., 
where $V$ is the a.s.~weak limit \eqref{eq:Vn-assump3} 
of the theorem. For any $\psi \in C^\infty([0,T])$, 
${\zeta} \in C^\infty(\T^d;\R^{m \times k})$, 
and measurable set $B \subset \Omega$,
\begin{equation}\label{eq:overlineV_limit}
	\Ex\int_0^T \mathds{1}_B\, \psi \,
	\bigl \LL V_n, {\zeta}\bigr\RR_{H^{-q},H^q}
	\,\d t \ton \Ex\int_0^T \mathds{1}_B \,
	\psi \, \bigl \LL \, \overline{V}, {\zeta}
	\bigr\RR_{H^{-q},H^q} \,\d t,
\end{equation}
by the weak convergence \eqref{eq:weaklim_H-q2}. 
On the other hand, by the a.s.~weak convergence 
in \eqref{eq:Vn-assump3}, with ${\zeta}$, $\psi$, 
and $B$ as above, 
$$
\mathds{1}_B\int_0^T \int_{\T^d} 
\psi\, {\zeta}: V_n\,\d x \,\d t
\ton  \mathds{1}_B  \int_0^T \int_{\T^d} 
\psi\, {\zeta} : V \,\d x \,\d t, 
\quad \text{$\mathbb{P}$--a.s.},
$$
where we again use the colon to denote 
the scalar product between the matrix-valued 
objects $\zeta$ and $V_n$. 
Besides, we have the higher moment bound ($p>2$)
$$
\Ex \abs{\,\int_0^T \int_{\T^d} 
\mathds{1}_B\,\psi\, {\zeta} : V_n\,\d x \,\d t}^p \le 
\norm{{\zeta} \psi}_{L^\infty_{t,x}}^p
\Ex \norm{V_n}_{L^p_{\omega, t}L^1_x}^p
\overset{\eqref{eq:Vn-assump3}}{\lesssim} 1.
$$
Hence, by Vitali's convergence theorem, 
$$
\Ex\int_0^T \int_{\T^d} \mathds{1}_B
\,\psi\, {\zeta} : V_n \,\d x \,\d t
\ton \Ex\int_0^T \int_{\T^d} 
\mathds{1}_B\, \psi\, {\zeta}: V\,\d x \,\d t.
$$
Upon comparison with \eqref{eq:overlineV_limit}, 
we arrive at $\left\LL\, \overline{V}
	-V,{\zeta}\right\RR_{H^{-q},H^q}=0$, 
$\mathbb{P}\otimes \d t $--a.e., 
for any ${\zeta}\in C^\infty(\T^d;\R^{m \times k})$ 
and, via an approximation argument, 
for any ${\zeta}\in H^q(\T^d;\R^{m \times k})$.
Taking the supremum over ${\zeta}$ with 
$\norm{{\zeta}}_{H^q(\T^d;\R^{m \times k})} \le 1$,  we have 
$\norm{\overline{V}-V}_{H^{-q}(\T^d;\R^{m \times k})} = 0$, 
$\mathbb{P}\otimes \d t $--a.e. 
This allows us to conclude that 
$V=\overline{V}$, $\mathbb{P} 
\otimes \d t \otimes \d x$--a.e. 

The bounds \eqref{eq:V_n_to_V_wk3} 
and \eqref{eq:V-in-prod-space-III} allows us  
to argue as in Step 1 of the proof of 
Theorem \ref{thm:main_x2} to conclude 
that in the decomposition \eqref{eq:I(n)-split}, 
$I_1(n,\rho), I_3(\rho) \torho 0$ 
in $L^2(\Omega;\R^m)$ uniformly in $n$.

To conclude the strong convergrence 
in $L^2(\Omega;\R^m)$ of $I_2(n,\rho)$ 
of the decomposition \eqref{eq:I(n)-split} 
as $n\uparrow \infty$, we argue as in 
Lemma \ref{thm:I_2_as_version}. Again 
taking up the decomposition \eqref{eq:I_2_decomp} 
for $I_2$ into $I_{2,1}$ to $I_{2,4}$,  
by \eqref{eq:I_23_converge}
and \eqref{eq:I_24_converge}, $I_{2,3}, I_{2,4} \ton 0$ 
in $L^2(\Omega;\R^m)$ already, using only  
\eqref{eq:V_n_to_V_wk3} 
and \eqref{eq:V-in-prod-space-III}.

To show
$I_{2,1}, I_{2,2} \ton 0$ in $L^2(\Omega;\R^m)$, 
we only require \eqref{eq:I_21_22_require}, 
which implies a.s.~convergence, and 
a uniform bound \eqref{eq:5a_normbound2}. 
The a.s.~convergence \eqref{eq:I_21_22_require} 
follows directly from the assumption  
\eqref{eq:Vn-assump3}. Meanwhile, the 
uniform bound \eqref{eq:5a_normbound2} 
holds by \eqref{eq:V_n_to_V_wk3} 
and \eqref{eq:V-in-prod-space-III}. 
\end{proof}

We end this section by making two remarks: the first 
pertains to the assumption of almost sure convergence 
\eqref{eq:Vn-assump_XII}, and the second addresses related 
convergence theorems for stochastic integrals.

\begin{remark}\label{rem:strongweak}
The time translation estimate \eqref{eq:Vn-assump_XI} 
in the almost sure convergence context of 
\eqref{eq:Vn-assump_XII} brings us very 
close to strong compactness (in $\omega,t$).
In fact, if $F_n = \int_{\T^d} \beta V_n\,\d x \tonweak 0$ 
in $L^2(\Omega \times [0,T])$, then we 
see immediately that the strong $L^2(\Omega)$ 
convergence \eqref{eq:strong_convg1} implies 
$$
\Ex \int_0^T F_n^2\,\d t =
\Ex \abs{\,\int_0^T F_n\,\d W_n}^2  \to 0.
$$
The convergence of $F_n$
in $L^2_{\omega, t}$ cannot happen in the same 
way under the weak $L^2(\Omega)$ 
convergence provided by Theorems \ref{thm:main_x1} 
and \ref{thm:main_x2}. The strong convergence 
of $\int_{\T^d} \beta V_n\,\d x$ suggests that in some 
situations, a suitable topology to consider 
the convergence of $V_n$ is in fact  in 
$L^p(\Omega \times [0,T];L^2(\T^d)-w)$, 
with the toplogy of strong convergence in $(\omega, t)$ and 
weak convergence in the spatial variable. That is, 
for every $\beta\in L^2(\T^d)$, 
$$
\quad \Ex \int_0^T \abs{\,\int_{\T^d} \beta(x)  
	\bk{V_n(t) - V(t)}\,\d x }^p\,\d t  \to 0.
$$
This is the $L^p$ analogue to the 
time-continuous space $C([0,T];X-w)$ 
mentioned in Section \ref{sec:intro} of our paper 
(see \cite{Brzezniak:2011aa,Ondrejat:2010aa}), 
and of the space $\mathbb{D}([0,T];L^2(\T^d)-w)$ 
which is Skorokhod (c\`adl\`ag) in time and weakly 
$L^2$ in space, used in \cite{brzezniak:2018aa}.
Strong-weak spaces of the form $L^p([0,T];X-w)$ 
featured saliently in the recent works 
\cite{GHKP-inviscid} and \cite{Karlsen:2023aa}.
\end{remark}

\begin{remark}
If $W_n=W$ for all $n\in \N$, then a 
related convergence result for stochastic 
integrals is discussed in 
\cite[Remark 4 (iii)]{Glatt-Holtz:2008aa}. 
There it is asserted that for $V_n \weak V$ 
in $L^2(\Omega\times [0,T])$, which 
is the assumption \eqref{eq:V_n_to_V_wk} 
with $p=2$ and $X=\R$, one also has 
$ \int_0^t V_n\,\d W \weak \int_0^t V \,\ dW$ 
in $L^2(\Omega \times [0,T])$ 
(but cf. Remark \ref{rem:counter_p}). 
Here the stochastic integral is understood 
as a process instead of being evaluated 
at $t=T$ and treated as a random variable. 
A similar assertion is made and proved in 
\cite[page 25]{Flandoli:2010yq} 
(for finite dimensional noise).  
The assertions in \cite{Flandoli:2010yq,Glatt-Holtz:2008aa} 
follow from the fact that the It\^{o} 
integral is linear and continuous from 
the space of adapted $L^2(\Omega\times [0,T])$ 
processes to $L^2(\Omega\times [0,T])$, 
and is thus also weakly continuous. 
In addition, since the space of adapted 
processes is a closed subspace of 
$L^2(\Omega\times [0,T])$, it is weakly 
closed, and so the limit $V$ is adapted. 
It is not possible to use this simple 
convergence proof when the Wiener 
process and the stochastic integrand 
both depend on $n \in \mathbb{N}$, which is 
the situation covered by Theorem \ref{thm:main_x1}.
\end{remark}

\section{Stochastic transport equations}
\label{sec:ste}
	
In this and the next section, we consider two applications 
of the limit theorems of Sections \ref{sec:mainresult}  
to \ref{sec:corollaries}. Here we establish a 
stability result for a scalar semilinear stochastic 
transport equation for which the nonlinearity 
is in the noise term. We use the term ``transport 
equation" loosely to mean either the transport 
equation or the continuity equation, and in fact 
mostly work with the latter. Our results apply 
to either type of equation with minimal technical 
modifications between the proofs, including the 
presence or absence of an additional lower 
order term. In our equations, we also incorporate a vanishing 
viscosity term to address the numerical diffusion 
commonly encountered in numerical schemes.

More precisely, we wish to develop a stochastic analogue 
of the strong stability result \cite[Theorem II.4]{DiPerna:1989aa}. 
Let $(\Omega, \mathcal{F}, \{\mathcal{F}_t\}_{t \in [0,T]}, \mathbb{P})$ 
be a filtered probability space on which $W$ 
is an $\R^k$-valued Brownian motion. 
A one-dimensional Brownian motion is employed here solely 
for the simplicity of presentation. 

We say that an adapted element $u 
\in L^p(\Omega\times [0,T] \times \T^d)$ 
is a weak solution of 
\begin{align}\label{eq:semilinear_ste}
	\d u + {\rm div}\,(b u) \,\d t 
	= f \,\d t + \ep \Delta u\,\d t 
	+ \sigma(u) \,\d W, 
	\qquad u(0) = u_{0}
\end{align}
if the equation holds a.s.~weakly in $x$ 
in the sense of It\^o, i.e., for every $\varphi \in C^2(\T^d)$, 
a.s.~for every $t \in (0,T]$,
\begin{equation}\label{eq:weaksoln_ste}
	\begin{aligned}
		&\int_{\T^d} u(t) \varphi\,\d x - \int_{\T^d} u_0 \varphi\,\d x
		-\int_0^t \int_{\T^d} \nabla \varphi \cdot b u\,\d x\,\d s\\
		& \qquad 
		= \int_0^t \int_{\T^d} f \varphi\,\d x \,\d s
		+\ep \int_0^t \int_{\T^d}  \Delta \varphi u\,\d x\,\d s
		+ \int_0^t \int_{\T^d} \sigma(u)\varphi\,\d x \,\d W.
	\end{aligned}
\end{equation}

Consider now solutions $u_n$ to 
\eqref{eq:semilinear_ste} above 
where $b$, $f$, $\sigma$, and $\ep$ are indexed 
with a parameter $n$. We assume that $u_n \weak u$ 
in $L^p(\Omega \times [0,T] \times \T^d)$ for some $p > 2$.  
The strong convergence $u_n \to u$, where $u$ 
is a weak solution of a limiting equation, will depend on 
a renormalisation procedure, in which a nonlinear 
function $\eta(u_n)$ of $u_n$ is evaluated. 
In the expression for $\d \eta(u_n)$, another nonlinear 
function ${\vartheta}(u_n) 
	:= \eta'(u_n) \sigma(u_n)$ of $u_n$ appears 
in the stochastic integral against $\d W_n$ 
(which are different for each $n$). 
Observe that even if $u_n \to u$ strongly in time and weakly 
in space (say), no strong convergence will be 
preserved {\em a priori} for the compostion 
${\vartheta}(u_n)$, which will only satisfy 
$L^p$ bounds in $(\omega ,t ,x)$ depending on 
its growth in terms of its argument. 
This leaves us with a {\em weak} limit 
${\vartheta}(u_n) \weak \overline{{\vartheta}}$ on
$L^p_{\omega, t, x}$ for the integrand of the 
stochastic integral. The convergence 
of these stochastic integrals then does not seem 
to follow from the standard convergence 
lemma of, e.g., \cite[Lemma 2.1]{Debussche:2011aa}
because of the lack of strong temporal compactness. 
However, we shall see from these examples that 
a temporal translation estimate of the form 
\eqref{eq:Vn-assump_XI} is often available to solutions of 
even quite singular SPDEs (Lemma \ref{thm:timetranslation}).

We shall establish the following analogue
to  \cite[Theorem II.4]{DiPerna:1989aa}:
\begin{theorem}\label{thm:ste_semilinear}
Fix $p > 2$. Set $p' = p/\bk{p - 1}$ and 
$p'' = p/\bk{p - 2}$, respectively the 
H\"older conjugates of $p$ and $p/2$. 
Let $\{u_n\}_{n = 1}^\infty$ be 
a sequence of weak solutions to 
\begin{align}\label{eq:semilinear_ste_n}
\d u_n + {\rm div}\,(b_n u_n)\,\d t
  = f_n \,\d t+ \frac1n \Delta u_n \,\d t 
  	+ \sigma(u_n)\,\d{W}_n, 
	\quad u_n(0) = u_{0,n}
\end{align}
for which $u_n\rightharpoonup u$ in 
$L^p(\Omega \times [0,T] \times \T^d)$. 
Suppose:
\begin{itemize}
\item[(i)] $\sigma \in C^{1,1}(\R;\R^k)$ is globally Lipschitz on $\R$, 
\item[(ii)]
	$\{b_n\}_{n \ge 1} \subset_b 
		L^1([0,T];W^{1,p''}(\T^d;\R^d))$, and \\
	$\{{\rm div}\,(b_n)\}_{n \ge 1}\subset_b 
		L^1([0,T];L^\infty(\T^d))$,  
\item[(iii)] $b_n \to b$ in $L^1([0,T];W^{1,p'}(\T^d;\R^d))$, 
	${\rm div}\,(b_n) \to {\rm div}\,(b)$ in 
	$L^1([0,T];L^{p''}(\T^d))$,
\item[(iv)] $f_n \to f$ in $L^1([0,T];L^p(\T^d))$, 
$u_{0,n} \to u_0$ in $L^{p}(\Omega ;L^p(\T^d))$, and 
\item[(v)]
	$W_n \to W$ a.s.~in $C([0,T];\R^k)$.
\end{itemize}
Then $u$ is a weak solution to 
\eqref{eq:semilinear_ste} with $\ep = 0$, 
and 
$u_n \to u$ in $L^p(\Omega \times [0,T] \times \T^d)$. 
\end{theorem}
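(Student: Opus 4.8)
My plan is to follow the DiPerna--Lions renormalisation scheme of \cite{DiPerna:1989aa}, adapted to the stochastic setting, letting Theorem \ref{thm:main_x1} absorb all the weak-in-time stochastic integral limits. \emph{Step 1 (identify the limit equation).} Since $\sigma$ is globally Lipschitz, the uniform $L^p$ bound on $u_n$ gives, along a non-relabelled subsequence, $\sigma(u_n)\weak\overline\sigma$ in $L^p(\Omega\times[0,T]\times\T^d;\R^k)$. I would then pass to the limit in the weak formulation \eqref{eq:weaksoln_ste}: the drift $\int\Grad\varphi\cdot b_nu_n$ and the lower-order term converge by weak--strong pairing, the exponents in (ii)--(iv) being chosen precisely so the H\"older products close (together with $p>2$); the viscosity $\tfrac1n\int\Delta\varphi\,u_n$ vanishes as $u_n$ is $L^p$-bounded; and the source converges by (iv). The only genuinely stochastic point is $\int_0^t\int\sigma(u_n)\varphi\,\d x\,\d W_n$, whose integrand $V_n=\int\varphi\,\sigma(u_n)\,\d x$ converges only \emph{weakly} in time. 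Here I invoke Theorem \ref{thm:main_x1}, whose hypotheses are met: the weak convergence of $V_n$, the a.s.\ convergence (v) of $W_n$, and the mean $L^1$ temporal translation estimate \eqref{eq:Vn-assump_XI} for $\sigma(u_n)$ supplied by the mechanism of Lemma \ref{thm:timetranslation} (write the equation for $\d\,\sigma(u_n)$, integrate over $[t-h,t]$ and then over $[h,T]$ after taking absolute values). This identifies $u$ as a weak solution of $\d u+\Div(bu)\,\d t=f\,\d t+\overline\sigma\,\d W$.

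\emph{Step 2 (renormalisation).} Taking $\eta(\xi)=\xi^2$, the parabolic regularisation $\tfrac1n\Delta u_n$ makes $u_n$ spatially smooth, so It\^o's formula applies and yields
\begin{align*}
	\d(u_n^2)+\Div(b_nu_n^2)\,\d t+u_n^2\,\Div b_n\,\d t
	&=2u_nf_n\,\d t+\tfrac1n\Delta(u_n^2)\,\d t-\tfrac2n\abs{\Grad u_n}^2\,\d t\\
	&\quad+2u_n\sigma(u_n)\,\d W_n+\abs{\sigma(u_n)}^2\,\d t,
\end{align*}
with dissipative term $-\tfrac2n\abs{\Grad u_n}^2\le0$ and It\^o correction $\abs{\sigma(u_n)}^2\,\d t$. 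For the limit $u$, the hypotheses $b\in L^1_tW^{1,p'}_x$ and $\Div b\in L^1_tL^\infty_x$ place us inside the DiPerna--Lions renormalisation theory, so a stochastic commutator argument shows $u$ is a renormalised solution and It\^o's formula produces the analogous identity for $u^2$, now carrying the It\^o correction $\abs{\overline\sigma}^2\,\d t$, the quadratic variation of the limit martingale $\int\overline\sigma\,\d W$.

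\emph{Step 3 (energy comparison and Gronwall).} Integrating both renormalised identities over $\T^d$ and taking $\Ex$ annihilates the stochastic integrals. Passing to the limit in the $u_n^2$-identity (using $u_n^2\weak\overline{u^2}$, $\abs{\sigma(u_n)}^2\weak\overline{\abs\sigma^2}$, products against the strongly convergent $\Div b_n,f_n$, and discarding the sign-definite dissipative term to obtain an inequality in the right direction) and subtracting the $u^2$-identity, I set $w:=\overline{u^2}-u^2\ge0$ by weak lower semicontinuity of the square. The noise defect is controlled using only the Lipschitz bound: expanding $\sigma(u_n)=\overline\sigma+(\sigma(u_n)-\overline\sigma)$ gives $\overline{\abs\sigma^2}-\abs{\overline\sigma}^2=\overline{\abs{\sigma(u_n)-\overline\sigma}^2}\le\overline{\abs{\sigma(u_n)-\sigma(u)}^2}\le L^2\,\overline{\abs{u_n-u}^2}=L^2w$. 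Since $\Div b\in L^1_tL^\infty_x$, this yields $\Ex\int_{\T^d}w(t)\,\d x\le\int_0^t(\norm{\Div b(s)}_{L^\infty_x}+L^2)\,\Ex\int_{\T^d}w(s)\,\d x\,\d s$, and as $u_{0,n}\to u_0$ strongly forces $w(0)=0$, Gronwall gives $w\equiv0$.

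\emph{Conclusion and main obstacle.} From $w\equiv0$ I get $\Ex\int u_n^2\to\Ex\int u^2$, which with $u_n\weak u$ upgrades to strong $L^2$ convergence; the uniform $L^p$ bound ($p>2$) then provides the equi-integrability to conclude $u_n\to u$ in $L^p(\Omega\times[0,T]\times\T^d)$ by Vitali. I expect the main obstacle to be the limit half of Step 2: justifying the stochastic renormalisation (It\^o's formula for $u^2$) when $b$ is merely Sobolev and the viscosity has been removed, i.e.\ a stochastic DiPerna--Lions commutator lemma in which the spatial regularisation error must be controlled \emph{simultaneously} with the It\^o correction term. By contrast, the weak-in-time behaviour of the stochastic integrals, which would otherwise be fatal, is exactly what Theorem \ref{thm:main_x1} resolves once the translation estimate of Lemma \ref{thm:timetranslation} is in hand.
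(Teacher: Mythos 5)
Your outline follows the paper's strategy in all essentials: renormalise with $\eta(v)=v^2$ via mollification and the DiPerna--Lions commutator, compare the weak limit $\overline{u^2}$ with $u^2$, bound the noise defect $\overline{\abs{\sigma}^2}-\abs{\overline{\sigma}}^2$ by $L^2\bigl(\overline{u^2}-u^2\bigr)$, and close with Gronwall; Theorem \ref{thm:main_x1} enters exactly where the paper uses it, to identify the coefficient $\overline{\sigma}$ in the limit equation for $u$. Two deviations are worth recording. First, in your Step 3 you take expectations of the renormalised identities \emph{before} letting $n\uparrow\infty$, so the martingale term $\int_0^t\int u_n\sigma(u_n)\,\d x\,\d W_n$ drops out and never needs a limit theorem. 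The paper instead passes to the limit in the almost-sure inequality \eqref{eq:eta(u_n)_ste} first, which forces an application of Theorem \ref{thm:main_x2} (not \ref{thm:main_x1}: the integrand $u_n\sigma(u_n)$ is only $L^1$ in $x$, whence the $L^p_tL^1_x$ framework and the $2p$-moment bound \eqref{eq:u_0_L2p_bound}), and only takes expectations at \eqref{eq:gronwall4_ste}. Your route is leaner and legitimate, since only the expected inequality feeds into Gronwall, but it buys less: the paper's version yields the $\omega$-wise limit inequality \eqref{eq:u_n_limit_equ_ste}, and your shortcut still requires verifying that the local martingale is a true martingale (or a stopping-time/Fatou argument), for which the same higher moment bound on $\norm{u_n}_{L^2_x}$ reappears --- this is precisely where the paper's auxiliary hypothesis \eqref{eq:sigma_technical} on $u_{0,n}$ is used. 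Second, your variance inequality $\overline{\abs{\sigma(u_n)-\overline{\sigma}}^2}\le\overline{\abs{\sigma(u_n)-\sigma(u)}^2}\le L^2 w$ is obtained by expanding squares; the paper phrases the identical bound through the Young measure $\nu_{\omega,t,x}$ in \eqref{eq:gronwall3_ste}. These are equivalent. A minor ordering point: the translation estimate for $V_n=\int\varphi\,\sigma(u_n)\,\d x$ in your Step 1 presupposes an equation for $\d\,\sigma(u_n)$, i.e.\ an instance of the renormalisation of your Step 2 (the paper's \eqref{eq:renorm_semilinear} with $\vartheta=\sigma$), so the renormalisation must come first.

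The one genuine gap is the final sentence of your conclusion. Convergence in measure (from the strong $L^2$ convergence) plus a uniform $L^p$ bound gives, via Vitali, convergence in $L^q(\Omega\times[0,T]\times\T^d)$ for every $q<p$, but \emph{not} in $L^p$: boundedness in $L^p$ does not imply equi-integrability of $\abs{u_n-u}^p$. To reach the endpoint the paper runs the argument for the whole family $\eta_\lambda(v)=\tfrac12(v-\lambda)^2$, concludes that the Young measure is a Dirac mass, and invokes \cite[Proposition 6.12]{Ped1997} together with the extra $2p$-moment bound; some such additional ingredient is needed in your write-up as well.
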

(We use ``$\subset_b$" to indicate a  
subset is bounded.)

\begin{remark}
We assumed weak convergence 
$u_n \rightharpoonup u$ in 
$L^{q_1}_{\omega, t} L^{q_2}_x$ in Theorem 
\ref{thm:ste_semilinear} with $q_1 = q_2 =  p$. 
The $q_1 > 2$ integrability 
in time is necessary for applying  the convergence theorems 
of Section \ref{sec:corollaries}. 
The spatial integrability restriction $q_2 > 2$ 
can be relaxed to $q_2 \ge 2$, however, 
with extra technical  steps. 
See Remark \ref{rem:pf_artifact}.
The resultant strong convergence 
would be in $L^{q_1}_{\omega, t} L^{q^*}_x$ 
with $q^* = q_2$ if $q_2 \le q_1$ or any $q^* < q_2$ 
if $q_2 > q_1$. 
\end{remark}

To establish this theorem, we 
shall repeatedly use the following result 
to get uniform time translation 
estimates for solutions to SPDEs. 
Let $M_b$ denote the space of 
bounded (finite total variation) signed Radon measures. 
\begin{lem}\label{thm:timetranslation}
Let $(\Omega, \mathcal{F}, 
	\{\mathcal{F}^n_t\}_{t \ge 0}, \mathbb{P})$ 
be a sequence of filtered probability spaces. 
Denote multi-indices by  
$\alpha \in \mathbb{N}^d$, and let $m_F, m_G\ge 0$ 
be three integers. Suppose 
\begin{itemize}
\item[(i)] for each $\abs{\alpha} \le m_F$, 
	$\{F_n^{(\alpha)}\}_{n \ge 1} \subset_b 
	L^1(\Omega;M_b([0,T] \times \T^d))$,
\item[(ii)] for each $\abs{\alpha} \le m_G$, 
	$\{G_n^{(\alpha)}\}_{n \ge 1} \subset_b 
	L^2(\Omega \times [0,T];L^2(\T^d;\R^k))$ where 
	$G_n^{(\alpha)}$ is  $\{\mathcal{F}_t\}_{t \ge 0}$-adapted, 
\item[(iii)] $\{\varrho_{0,n}\}_{n \ge 1} 
	\subset_b L^1(\Omega \times \T^d)$, and
\item[(iv)] for each $n \in \N$, $W_n$ is an $\R^k$-valued 
	a $\{\mathcal{F}_t^n\}_{t \ge 0}$-Wiener processes.
\end{itemize}
Let $\{\varrho_n\}_{n \ge 1}$ be solutions 
to the equation 
$$
\d \varrho_n = \sum_{\abs{\alpha} \le m_F}
	\pd_x^{\alpha} F_n^{(\alpha)} \,\d t 
	+ \sum_{\abs{\alpha} \le m_G}
	\pd_x^\alpha G_n^{(\alpha)}\,\d W_n,
	$$ i.e.,
	for any $\varphi \in C^\infty(\T^d)$, 
	and a.s.~for a.e.~$t \in [0,T]$,
\begin{equation}\label{eq:ttrans_equation}
\begin{aligned}
\int_{\T^d}  \varrho_n(t)\varphi\,\d x  
	- \int_{\T^d}  \varrho_{0,n}\varphi\,\d x 
	&= \sum_{\abs{\alpha} \le m_F}
	\bk{-1}^{\abs{\alpha}} \int_0^T\int_{\T^d}
	 \pd_x^{\alpha}\varphi \one{[0,t]}\, F_n^{(\alpha)}(\d r,\d x)\\
&\quad\,\, + \sum_{\abs{\alpha} \le m_G}
	\bk{-1}^{\abs{\alpha}}\int_0^t \int_{\T^d} 
 	\pd_x^\alpha\varphi \,  G_n^{(\alpha)}\,\d x \,\d W_n.
\end{aligned}
\end{equation}
Then for any $\psi \in C^\infty(\T^d)$, 
there is a constant $C_\psi > 0$, dependent 
on $\norm{\pd_x^\alpha\psi}_{L^\infty(\T^d)}$ 
for every $\abs{\alpha} < m_F \vee m_G$, but 
independent of $n$, such that 
\begin{align*}
\Ex \int_h^T \abs{\,\int_{\T^d} \psi(x) 
	\big(\varrho_n(t) - \varrho_n(t - h)\big)\,\d x}\,\d t 
	\le C_\psi h^{1/2}
	\quad \text{uniformly in $n$}.
\end{align*}
\end{lem}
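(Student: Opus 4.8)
The plan is to test the defining equation \eqref{eq:ttrans_equation} against the fixed function $\psi$ and subtract the weak formulations evaluated at times $t$ and $t-h$. Since \eqref{eq:ttrans_equation} holds a.s.~for a.e.~$t$, for a.e.~$t \in [h,T]$ we obtain
\begin{equation*}
\int_{\T^d} \psi\,\bk{\varrho_n(t)-\varrho_n(t-h)}\,\d x
= A_n(t) + B_n(t),
\end{equation*}
where $A_n(t) = \sum_{\abs{\alpha}\le m_F}(-1)^{\abs{\alpha}} \int_{(t-h,t]}\!\int_{\T^d} \pd_x^\alpha\psi\, F_n^{(\alpha)}(\d r,\d x)$ is the drift contribution and $B_n(t) = \sum_{\abs{\alpha}\le m_G}(-1)^{\abs{\alpha}} \int_{t-h}^t \bk{\int_{\T^d}\pd_x^\alpha\psi\, G_n^{(\alpha)}\,\d x}\d W_n$ is the martingale contribution. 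The key feature of this splitting is that, because all derivatives have been moved onto $\psi$, each term carries only $\psi$-dependent constants. I would then estimate $\Ex\int_h^T \abs{A_n(t)}\,\d t$ and $\Ex\int_h^T\abs{B_n(t)}\,\d t$ separately.

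For the drift term I would, for each $\alpha$, bound $\abs{\int_{(t-h,t]}\int_{\T^d}\pd_x^\alpha\psi\, F_n^{(\alpha)}} \le \norm{\pd_x^\alpha\psi}_{L^\infty}\,\abs{F_n^{(\alpha)}}\bk{(t-h,t]\times\T^d}$, where $\abs{F_n^{(\alpha)}}$ is the total variation measure. Integrating in $t$ and swapping the order of integration by Fubini--Tonelli, the elementary fact that $\int_h^T \one{\{t-h<r\le t\}}\,\d t \le h$ collapses the time window to give $\int_h^T \abs{F_n^{(\alpha)}}\bk{(t-h,t]\times\T^d}\,\d t \le h\,\abs{F_n^{(\alpha)}}\bk{[0,T]\times\T^d}$. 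Taking expectations and invoking the uniform $L^1(\Omega;M_b)$ bound (i) yields $\Ex\int_h^T\abs{A_n(t)}\,\d t \lesssim h$, uniformly in $n$.

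The martingale term is where the $h^{1/2}$ rate (and the crux of the argument) originates, since one cannot move the absolute value inside the stochastic integral. Writing $H_n^{(\alpha)}(r):=\int_{\T^d}\pd_x^\alpha\psi\, G_n^{(\alpha)}(r)\,\d x \in \R^k$, which inherits adaptedness from $G_n^{(\alpha)}$ by (ii), I would first pass from the $L^1$ to the $L^2$ norm in $\omega$ by Jensen's inequality and apply the It\^o isometry to get, for each fixed $t$,
\begin{equation*}
\Ex\abs{\int_{t-h}^t H_n^{(\alpha)}\,\d W_n} \le \bk{\Ex\int_{t-h}^t \abs{H_n^{(\alpha)}(r)}^2\,\d r}^{1/2}.
\end{equation*}
Integrating in $t$ over $[h,T]$ and applying Cauchy--Schwarz in $t$ (pairing the integrand against $1$) produces the factor $(T-h)^{1/2}$ while moving the square root outside the $\d t$ integral; a second Fubini--Tonelli step gives $\int_h^T\int_{t-h}^t \abs{H_n^{(\alpha)}(r)}^2\,\d r\,\d t \le h\int_0^T \abs{H_n^{(\alpha)}(r)}^2\,\d r$. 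Finally, Cauchy--Schwarz in $x$ yields $\abs{H_n^{(\alpha)}(r)}^2 \le \norm{\pd_x^\alpha\psi}_{L^2}^2\,\norm{G_n^{(\alpha)}(r)}_{L^2(\T^d;\R^k)}^2$, and the uniform $L^2(\Omega\times[0,T];L^2)$ bound (ii) closes the estimate as $\Ex\int_h^T\abs{B_n(t)}\,\d t \lesssim h^{1/2}$, uniformly in $n$.

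Combining the two estimates and using $h \le h^{1/2}$ for $h\le 1$ gives the claimed bound $C_\psi h^{1/2}$, with $C_\psi$ controlled by the finitely many sup-norms $\norm{\pd_x^\alpha\psi}_{L^\infty}$ (for $\abs{\alpha}\le m_F\vee m_G$, using that $L^2(\T^d)$-norms are dominated by $L^\infty$-norms on the torus) together with the uniform bounds in (i)--(ii). The main obstacle is genuinely the martingale term: the drift decays faster ($O(h)$), so the square-root rate is dictated entirely by the noise, and it emerges only from the combined use of Jensen's inequality, the It\^o isometry, and the two Fubini/Cauchy--Schwarz manipulations in the time variable.
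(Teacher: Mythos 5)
Your proposal is correct and follows essentially the same route as the paper: the same drift/martingale decomposition, the same Fubini--Tonelli collapse of the time window giving $O(h)$ for the measure-valued drift, and the same $h^{1/2}$ extraction for the noise term (the paper uses the BDG inequality followed by Jensen where you use Cauchy--Schwarz in $\omega$ plus the It\^o isometry, but these yield the identical intermediate bound and the subsequent steps coincide). No gaps.
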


\begin{proof}
Observe that the assumptions in this 
lemma are all boundedness assumptions 
rather than convergence assumptions. 
By subtracting \eqref{eq:ttrans_equation} 
at $t - h$ from the same at $t - h$, we have 
\begin{align*}
\Ex \int_h^T\abs{\,\int_{\T^d} \varphi\, 
	\bk{\varrho_n(t) - \varrho_n(t - h)}\,\d x}\,\d t 
& \le \sum_{j = 2} I_j, 
\end{align*}
where
\begin{align*}
I_1 &:= \Ex \int_h^T \abs{\sum_{\abs{\alpha} \le m_F}
	\bk{-1}^{\abs{\alpha}}\int_0^T \int_{\T^d} 
		\pd_x^{\alpha}\varphi \mathds{1}_{(t- h, t]}
	 \, F_n^{(\alpha)}(\d r, \d x)}\\
I_2 &:=  \Ex \int_h^T  \abs{\sum_{\abs{\alpha} \le m_G}
	\bk{-1}^{\abs{\alpha}}\int_{t - h}^t\left\LL
	 G_n^{(\alpha)},\pd_x^\alpha\varphi\right\RR_{H^{-q}, H^q}\,\d W_n}\,\d t.
\end{align*}

We can estimate $I_1$ as follows:
\begin{align*}
I_1 \le \sum_{\abs{\alpha}\le m_F}
	\Ex  \int_0^T\int_h^T \mathds{1}_{\{r  < t < r + h\}} \,\d t\,\, 
	\norm{\pd_x^{\alpha}\varphi}_{L^\infty(\T^d)}
	\norm{F_n^{(\alpha)}(\d r)}_{TV(\T^d)}
	\overset{\text{(i)}}{\lesssim}_{\psi, m_F, T} h,
\end{align*}
where the implied constant does not depend 
on $n$ because of the uniform bound on 
$F_n$ in $M_b([0,T] \times \T^d)$. We have used 
the fact that $F_n$ admits a disintegration 
(e.g., \cite[Theorem 1.10]{evans:1990}), 
so $\norm{F_n^{(\alpha)}}_{TV(\T^d)} \in M_b([0,T])$.
By the BDG inequality and then Jensen's inequality, 
we obtain the estimate
\begin{align*}
	I_2 &\le \sum_{\abs{\alpha} \le m_G}
	\int_h^T \Ex \bk{\int_{t - h}^t\abs{\left\LL
	 G_n^{(\alpha)},\pd_x^\alpha\varphi
	 	\right\RR_{H^{-q}, H^q}}^2\,\d r }^{1/2}\,\d t \\
	& \lesssim_T \sum_{\abs{\alpha} \le m_G}
	 \bk{\int_h^T \Ex \int_{t - h}^t\abs{\left\LL
	 G_n^{(\alpha)},\pd_x^\alpha\varphi
	 \right\RR_{H^{-q}, H^q}}^2\,\d r \,\d t }^{1/2}
	\overset{\text{(ii)}}{\lesssim}_{\psi, T} h^{1/2},
\end{align*}
which is uniform in $n$. The final inequality can be attained 
as for $I_1$, but within the square-root. 
This establishes the lemma.
\end{proof}

\begin{remark}
Even though the statement of Lemma  
\ref{thm:timetranslation} is written for $x \in \T^d$, 
it is clear from the proof that it holds for 
$x \in \R^d$, under the qualification that  
the test functions $\varphi$ must be 
compactly supported.
\end{remark}

We will repeatedly use the following 
simple fact about weak convergence.

\begin{lem}\label{thm:wkcv_ae}
Suppose $F_n = F_n(\omega, t) \le 0$ for 
a.e.~$(\omega,t)\in \Omega \times [0,T]$, for each $n\in \N$.
Furthermore, suppose we have the weak convergence 
$F_n \rightharpoonup F$ in $L^p(\Omega \times [0,T])$, 
for some $p\in [1,\infty)$. Then $F \le 0$ a.e.~on $\Omega \times [0,T]$. 
The same statement holds with ``$\le$" 
replaced consistently by ``$\ge$" or ``$=$" throughout.
\end{lem}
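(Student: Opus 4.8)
The plan is to exploit the elementary fact that the cone of a.e.-nonpositive functions is a closed convex subset of $L^p$, and hence weakly closed; I will realise this concretely by testing the weak convergence against indicator functions. First I would fix an arbitrary measurable set $A \subseteq \Omega \times [0,T]$ and observe that, because the total measure $\mathbb{P}\otimes \d t$ is finite (being the product of a probability measure and Lebesgue measure on a bounded interval), the indicator $\one{A}$ belongs to $L^{p'}(\Omega \times [0,T])$ for the H\"older conjugate $p' \in (1,\infty]$ of $p$. This remains true in the endpoint case $p=1$, where $p'=\infty$ and $\one{A} \in L^\infty$. In all cases $\one{A}$ is therefore an admissible test function for the weak convergence $F_n \weak F$.

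The next step uses the sign hypothesis. Since $F_n \le 0$ a.e.\ by assumption, I have $\Ex \int_0^T \one{A} F_n \,\d t \le 0$ for every $n$. Testing the weak convergence $F_n \weak F$ against $\one{A} \in L^{p'}$ and passing to the limit gives $\Ex \int_0^T \one{A} F_n \,\d t \to \Ex \int_0^T \one{A} F \,\d t$, so that $\Ex \int_0^T \one{A} F \,\d t \le 0$. As $A$ was arbitrary, this integral inequality holds for every measurable $A \subseteq \Omega \times [0,T]$.

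It then remains to convert this into the pointwise conclusion. I would apply the inequality to the sets $A_k := \{(\omega,t): F(\omega,t) > 1/k\}$ for $k \in \N$, which yields $\frac1k \, \Ex \int_0^T \one{A_k}\,\d t \le \Ex \int_0^T \one{A_k} F \,\d t \le 0$, forcing $(\mathbb{P}\otimes \d t)(A_k) = 0$. Since $\{F > 0\} = \bigcup_{k} A_k$ is a countable union of null sets, I conclude $(\mathbb{P}\otimes \d t)(\{F>0\}) = 0$, i.e.\ $F \le 0$ a.e. The ``$\ge$'' statement follows at once by applying the ``$\le$'' case to the sequence $-F_n \weak -F$, and the ``$=$'' statement follows by combining the ``$\le$'' and ``$\ge$'' conclusions.

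There is essentially no obstacle in this argument; the only point that genuinely needs care is the admissibility of $\one{A}$ as a test function, which rests squarely on the finiteness of the measure space $\Omega \times [0,T]$---this is what guarantees $\one{A} \in L^{p'}$ even in the borderline case $p = 1$. I would note that one could alternatively phrase the entire argument as an instance of Mazur's lemma, namely that closed convex subsets of $L^p$ are weakly closed, but the indicator-testing argument above is self-contained and avoids invoking that machinery.
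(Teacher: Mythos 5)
Your proof is correct and takes essentially the same route as the paper's: both test the weak convergence against nonnegative bounded functions (the paper uses an arbitrary $0 \le Y \in L^\infty(\Omega\times[0,T])$, you use indicators $\one{A}$) and deduce the sign of $F$ from the resulting family of integral inequalities. Your explicit passage to the sets $\{F > 1/k\}$ merely spells out the final step that the paper leaves implicit under ``by the arbitrariness of $Y \ge 0$''.
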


\begin{proof}
Since $L^\infty(\Omega \times [0,T]) 
\subset L^p(\Omega \times [0,T])$, 
by choosing a non-negative element 
$Y \in L^\infty(\Omega \times [0,T])$, 
we see immediately that 
$0 \ge \Ex \int_0^T Y F_n \,\d t \to \Ex \int_0^T Y F\,\d t$.
By the arbitrariness of $Y\ge 0$, it follows that $F \le 0$ 
a.e.~on $\Omega \times [0,T]$. 
\end{proof}

\begin{proof}[Proof of Theorem \ref{thm:ste_semilinear}]
We prove this theorem with the additional 
technical assumptions that 
\begin{align}\label{eq:sigma_technical}
u_{0,n} \to u_0\quad \text{in $L^{2p}(\Omega ;L^p(\T^d))$}, \qquad
\abs{\sigma''(v)}\lesssim1 \wedge \abs{v}^{p - 3}.
\end{align}
These can be removed --- see Remark  \ref{rem:pf_artifact}. 
Our emphasis will be on the convergence of stochastic integrals.

The strategy is to take a nonlinear function 
$\eta(v) = \frac12 v^2$, derive an inequality for
the weak limit $\overline{\eta}$ of $\eta(u_n)$ 
and to derive an equation $\eta(u)$ in order 
to compare the two quantities. 
The $\mathbb{P}\otimes \d t \otimes \d x$-a.e.~coincidence 
$\eta(u) = \overline{\eta}$ 
will imply the strong convergence $u_n \to u$. 

First we derive an equation for $\eta(u_n)$, 
whose weak limit will give an inequality for $\overline{\eta}$.
Let $J_\delta$ be a standard mollifier on 
$\T^d$. For locally integrable functions 
$g$, let $g_\delta := J_\delta * g$. By 
testing \eqref{eq:semilinear_ste_n} 
against $J_\delta$, we have the 
pointwise-in-$x$ equation
\begin{align*}
&u_{n,\delta}(t) - u_{0,n,\delta} - \int_0^t 
	{\rm div}\,(b_{n,\delta} u_{n,\delta}) \,\d s \\
& = \int_0^t f_{n,\delta} \,\d s 
	+ \frac1n \int_0^t \Delta u_{n,\delta} \,\d s 
	+\sigma(u_n)_\delta  \,\d W_n 
	+ E^{(1)}_{n,\delta}[u_n].
\end{align*} 
The term $E^{(1)}_{n,\delta}$ is the commutator 
$E^{(1)}_{n,\delta}[u] :={\rm div}\,(b_{n,\delta} u_\delta)
-{\rm div} J_\delta *(b_n u)$. 
This is the classical commutator 
of DiPerna--Lions, so applying \cite[Lemma II.1]{Lio1996b}, 
and the dominated convergence theorem in 
the $\omega$ variable,  this 
error tends to nought a.s.~and  in 
$L^p(\Omega \times [0,T]\times\T^d)$.
We can now apply the It\^o formula 
pointwise in $x$, to get that for any 
entropy $S=S(x,u) \in C(\T^d;C^{1,1}(\R))$, 
a.s.~for all $s,t \in [0,T]$, $s \le t$, 
\begin{equation*}
\begin{aligned}
	 &{S}(x,u_{n,\delta}(t)) -  {S}(x,u_{n, \delta}(s)) 
	+\int_s^t {S}'(x,u_{n,\delta}(r)) 
	\,{\rm div}\bk{b_{n,\delta} u_{n,\delta}}  \,\d r \\
	& =\int_s^t   {S}'(x,u_{n,\delta}(r)) f_{n,\delta} 
	+ \frac1n \int_s^t {S}'(x,u_{n,\delta}(r)) \Delta u_{n,\delta} \,\d r\\
	&\quad\,\,
	+ \frac12 \int_s^t 
	{S}''(x,u_{n,\delta}(r)) \abs{\sigma(u_n(r))_\delta}^2\,\d r  \\
	&\quad\,\,+ \int_s^t 
	 {S}'(x,u_{n,\delta}(r)) \sigma(u_n(r))_{\delta}\,\d W_n
	 +\int_s^t S'(u_{n,\delta}(r)) E^{(1)}_{n,\delta}[u_n]\,\d r,
\end{aligned}
\end{equation*}
where ${S}'$ and ${S}''$ refer to 
derivatives in the second argument only.
Suppose $S(x,v) = \psi(x) \vartheta(v)$, $\psi \in C^2(\T^d)$, and 
\begin{equation}\label{eq:beta_assump1}
\abs{\vartheta(v)} \lesssim 1 + \abs{v}^p, \quad 
\abs{\vartheta'(v)} \lesssim 1 + \abs{v}^{p - 1}, \quad 
\abs{\vartheta''(v)} \lesssim 1 + \abs{v}^{p - 2}.
\end{equation}
Then we can integrate in $x$ to get
\begin{equation}\label{eq:renorm_semilinear2}
\begin{aligned}
	&\int_{\T^d} \psi(x) \big(\vartheta(u_{n,\delta}(t)) 
 	-  \vartheta(u_{n, \delta}(s))\big) \,\d x\\ 
	&\quad +\int_s^t\int_{\T^d}  \psi\,{\rm div}\bk{b_{n,\delta}} 
	\bk{ \vartheta'(u_{n,\delta})u_{n,\delta} - \vartheta(u_{n,\delta})}
	-b_{n,\delta} \cdot \nabla \psi \vartheta(u_{n,\delta})\,\d x \,\d r\\
	& =\int_s^t  \int_{\T^d} \psi \vartheta'(u_{n,\delta}) f_{n,\delta} \,\d x \,\d r
	+ \int_s^t\int_{\T^d} \psi\vartheta'(u_{n,\delta}) E^{(1)}_\delta[u]\,\d x\,\d r\\
	&\quad + \frac1n \int_s^t\int_{\T^d} \Delta \psi \vartheta(u_{n,\delta})  
	- \psi \vartheta''(u_{n,\delta}) \abs{\nabla u_{n,\delta}}^2\,\d x \,\d r\\
	&\quad
	+ \frac12 \int_s^t  \int_{\T^d} 
	\psi\vartheta''(u_{n,\delta}) \abs{\sigma(u_n)_\delta}^2\,\d x\,\d r
	+ \int_s^t  \int_{\T^d}
	\psi \vartheta'(u_{n,\delta}) \sigma(u_n)_\delta\,\d x\,\d W_n.
\end{aligned}
\end{equation}

The global Lipschitz assumption on 
$\sigma$ implies $\abs{\sigma(v)} \lesssim 1 + \abs{v}$. 
Therefore, $\sigma(u_n) \in 
	L^p(\Omega \times [0,T] \times \T^d;\R^k)$. 
Setting ${S}(v) = \frac1p \abs{v}^p$, 
$\psi \equiv 1$, and $s = 0$, the dissipation 
is conveniently signed. We can take the 
supremum over $t \in [0,T]$, an expectation, 
and apply the BDG inequality 
and Gronwall's lemma. The result is 
\begin{equation}\label{eq:p_energybound}
\begin{aligned}
\Ex \sup_{t \in [0,T]} \norm{u_{n,\delta}(t)}_{L^p(\T^d)}^p  
+ \frac1n \Ex \int_0^T \int_{\T^d} &
	\abs{u_{n,\delta}}^{p - 2} \abs{\nabla u_{n,\delta}}^2 \,\d x \,\d t \\
&	\lesssim_{T,  \norm{{\rm div}\,(b_n)}_{L^1_tL^\infty_x},
		 \norm{f_n}_{L^1_tL^p_{x}}} 1.
\end{aligned}
\end{equation}
This estimate is uniform in $\delta$, and holds 
with $\delta = 0$. 

Except in the dissipation term, by the 
boundedness assumptions (i) to (iv) in the theorem statement, 
using standard properties of the mollifier we are able to 
take an a.s.~limit $\delta \downarrow 0$ in 
\eqref{eq:renorm_semilinear2} above. 
In particular, the discussion on the convergence of 
the commutator $E^{(1)}_\delta$ ensures 
that the final integral tends to $0$ as $\delta \downarrow 0$. 

By the standard properties of mollifiers 
and the temporal integrability on 
$\vartheta'(u_n)$ and $\sigma(u_n)$ 
implied by \eqref{eq:p_energybound}, 
\eqref{eq:beta_assump1}, and 
$\abs{\sigma(v)} \lesssim 1 + \abs{v}$, we have
\begin{align}\label{eq:temam1}
	\int_{\T^d} 
	 \psi \vartheta'(u_{n,\delta}) \sigma(u_n)_\delta\,\d x
	\todelta \int_{\T^d} 
	 \psi \vartheta'(u_{n}) \sigma(u_n)\,\d x
	 \quad\text{in $L^2([0,T];\R^k)$ a.s.},
\end{align}
so that there is strong temporal $L^2$ convergence of 
the stochastic integrands indexed by 
the mollification parameter $\delta > 0$. 
Thus, with reference to \eqref{eq:dgt_assumptions} 
-- \eqref{eq:ito_conv-in-prob}, 
the stochastic integral then tends to itself 
without the subscript $\delta$ by the 
standard convergence result of 
\cite[Lemma 2.1]{Debussche:2011aa}. 

Suppose that in addition to \eqref{eq:beta_assump1}, we have  
\begin{align}\label{eq:beta_assump2}
\abs{\vartheta''(v)}\lesssim 1.
\end{align}
Using the fact that $a_n \in_b L^\infty$, 
$b_n \to b$ in $L^1$ imply $a_n b_n \to ab$ 
in $L^1$,  we can derive the convergence 
of the dissipation term $\vartheta''(u_{n,\delta}) 
	\abs{\nabla u_{n,\delta}}^2$ also. 
Finally, we get the following equation 
a.s.~for every $t, s \in [0,T]$, $s \le t$:
\begin{equation}\label{eq:renorm_semilinear}
\begin{aligned}
\int_{\T^d} \psi(x) 
	&\big(\vartheta(u_n(t)) -  \vartheta(u_{n}(s))\big) \,\d x \\
&= \sum_{j = 1}^4 \int_s^t\int_{\T^d} I_j(n) \,\d x \,\d r 
	+ \int_s^t \int_{\T^d}  I_5(n) \,\d x\,\d W_n, 
\end{aligned}
\end{equation}
where
\begin{align*} 
I_1(n) &:= -   \psi\,{\rm div}\bk{b_n} 
	\bk{ \vartheta'(u_n)u_n - \vartheta(u_n)}
	- b_n \cdot \nabla \psi \vartheta(u_n),\\
I_2(n) &:=  \psi \vartheta'(u_n) f_n,\qquad
I_3(n) :=  \frac1n\Delta \psi \vartheta(u_n)
		- \frac1n \psi \vartheta''(u_n) \abs{\nabla u_n}^2,\\
I_4(n) &:= 
	\frac12 	\psi\vartheta''(u_n) \abs{\sigma(u_n)}^2, \qquad 
I_5(n) := 
	 \psi \vartheta'(u_n) \sigma(u_n).
\end{align*}

With $\vartheta(v) = \eta(v) = \frac12 v^2$, and $\psi \equiv 1$, 
 $I_3 \le 0$, $(\omega, t, x)$-a.e. Therefore, we find:
\begin{equation}\label{eq:eta(u_n)_ste}
\begin{aligned}
\int_{\T^d} &\eta(u_n(t)) -  \eta(u_{0,n})\,\d x 
	+\int_0^t \int_{\T^d} \bk{\eta'(u_n) u_n - \eta(u_n)}\,
		{\rm div}\bk{b_n} \,\d x \,\d s \\
& \le\int_0^t  \int_{\T^d} \eta'(u_n) f_n 
	+ \frac12 \eta''(u_n) \abs{\sigma(u_n)}^2 \,\d x \,\d s  \\
&\quad\,\,+ \int_0^t \int_{\T^d}
	 \eta'(u_n) \sigma(u_n)\,\d x \,\d W_n,\quad \text{a.s.}
\end{aligned}
\end{equation}
We will take limits of terms in this inequality 
to get an inequality for $\overline{\eta}$.

By the bound \eqref{eq:p_energybound} on $u_n$, 
\begin{align*}
\{\eta(u_n)\}, \{\eta'(u_n)u_n \}, \{\eta'(u_n)\} 
	\subset_b
	L^{p/2}(\Omega \times [0,T] \times \T^d).
\end{align*}
We deduce that along an unrelabelled subsequence, 
these three sequences tend respectively to 
weak limits $\overline{\eta}$, $\overline{\eta' u}$ 
and $\overline{\eta'}$ in 
$ L^{p/2}(\Omega \times [0,T] \times \T^d)$. 
Additionally, by the assumption (i) on $\sigma$, 
for each $j \in \{1,\ldots,k\}$, 
\begin{align}\label{eq:Lp/2_bound_ste}
\{\eta'(u_n) \sigma_j(u_n)\} , 
	\{ \eta''(u_n) \abs{\sigma(u_n)}^2\} 
	\subset_b 
	L^{p/2}(\Omega \times [0,T] \times \T^d). 
\end{align}
Let $\overline{\eta'\sigma}$ 
and $\overline{\eta''\abs{\sigma}^2}$ 
respectively be the (component-wise) weak limits of 
$\eta'(u_n) \sigma(u_n)$ and 
$\eta''(u_n) \abs{\sigma(u_n)}^2$ in this space. 
As $p > 2$, these limits are not singular measures. 

We now take a closer look at the 
limit of the martingale term, whose 
convergence shall be a result of Theorem \ref{thm:main_x2}.
Exploiting the higher moment assumption 
on the initial data $u_{0,n}$ in $L^p(\T^d)$, 
by a calculation similar to one that 
led to \eqref{eq:p_energybound}, 
\begin{equation*}
\Ex \sup_{t \in [0,T]} 
	\norm{u_n(t)}_{L^p(\T^d)}^{2p}  \lesssim 1.
\end{equation*}
Hence,
\begin{equation}\label{eq:u_0_L2p_bound}
\begin{aligned}
\Ex \norm{\eta'(u_n) 	
	\sigma(u_n) }_{L^p([0,T];L^1(\T^d;\R^k))}^{p}
	&\lesssim 1 + \Ex \int_0^T \bk{\int_{\T^d} 
		\abs{u_n}^{2} \,\d x}^p \,\d t\\
	&\lesssim 
	\Ex \sup_{t \in [0,T]}\norm{u_n(t)}_{L^p(\T^d)}^{2p}
	 \lesssim 1.
\end{aligned}
\end{equation}

Turning to the $L^1$ temporal translation estimate 
\eqref{eq:Vn-assump_XIII} of Theorem \ref{thm:main_x2}, 
we use \eqref{eq:renorm_semilinear} again, 
this time choosing 
$$
\psi \in C^2(\T^d),\quad
 \vartheta = \eta' \sigma, \quad \text{and}\quad  s = t - h.
 $$ 
We have (cf.~\eqref{eq:beta_assump1} and \eqref{eq:beta_assump2}):
$$
\abs{\vartheta(v)} \lesssim 1 + \abs{v}^{2}, \quad 
\abs{\vartheta'(v)} \lesssim 1 + \abs{v}, \quad 
\abs{\vartheta''(v)} \lesssim 1,
$$
where we used the assumption (i) 
on $\sigma$ in the theorem statement. 
We can now apply Lemma \ref{thm:timetranslation} 
to $\vartheta(u_n)$ in \eqref{eq:renorm_semilinear}.
In particular, with $\psi, \nabla \psi \in L^\infty_x$, 
$b_n, {\rm div}\,(b_n) \in_b L^1_tL^{p''}_x$, and 
$u_n \in_b L^2_\omega L^\infty_t L^p_x$, 
\begin{equation}\label{eq:ste_translation_1}
\begin{aligned}
I_1(n), I_2(n) \in_b L^1(\Omega \times [0,T]\times \T^d).
\end{aligned}
\end{equation}
Similarly, using the bound \eqref{eq:p_energybound}, 
\begin{equation}\label{eq:ste_translation_3}
\begin{aligned}
\abs{I_3(n)} \in_b L^1(\Omega; \times [0,T]\times \T^d).
\end{aligned}
\end{equation}

For $I_4$, using the bounds 
$\abs{\vartheta''(v)} \lesssim \abs{v}^{p - 2}$, 
$\abs{\sigma(v)} \lesssim 1 + \abs{v}$, 
\begin{align}\label{eq:ste_translation_4}
I_4(n) \in_b L^1(\Omega \times [0,T] \times \T^d).
\end{align}

Again in light of the bounds 
$\abs{\sigma(v)}, \abs{\vartheta'(v)} \lesssim 1 + \abs{v}$, 
\begin{equation}\label{eq:ste_translation_5}
\begin{aligned}
I_5(n) \in_b L^2(\Omega \times [0,T] ;L^1(\T^d)).
\end{aligned}
\end{equation}

Since  \eqref{eq:ste_translation_1} 
-- \eqref{eq:ste_translation_5} hold for each 
$\psi \in C^2(\T^d)$, we have 
by Lemma \ref{thm:timetranslation},
$$
\Ex \int_h^T \abs{\,\int_{\T^d} \psi(x) 
	\bk{\vartheta(u_n(t)) - \vartheta(u_n(t - h))}\,\d x} \d t 
	\lesssim_{p,T, \psi} h^{1/2}, \quad \text{uniformly in $n$}.
$$
Then by Theorem \ref{thm:main_x2}
for any $t \in [0,T]$, choosing $\psi \equiv 1$, 
\begin{align*}
\int_0^t \int_{\T^d} 
		\eta'(u_n) \sigma(u_n) \,\d x \,\d W_n
		\tonweak \int_0^t \int_{\T^d} 
		\overline{\eta' \sigma} \,\d x \,\d W
		\quad \text{in $L^2(\Omega)$}.
\end{align*}

In view of the assumed strong convergences 
of $f_n$,  ${\rm div}\,(b_n)$, $u_{0,n}$, the 
remaining integrals of \eqref{eq:eta(u_n)_ste} 
converge appropriately, and by Lemma 
\ref{thm:wkcv_ae} we have the 
$\mathbb{P}\otimes \d t$-a.e.~inequality: 
\begin{equation}\label{eq:u_n_limit_equ_ste}
\begin{aligned}
&\int_{\T^d} \overline{\eta}(t) - \eta(u_0)\,\d x 
+ \int_0^t \int_{\T^d} \overline{\bk{\eta' u - \eta}} 
	\,{\rm div}\,(b) \,\d x\,\d s\\
& \le \int_0^t \int_{\T^d} \overline{\eta'} f\,\d x \,\d s
	+ \frac12\int_0^t \int_{\T^d} 
		\overline{\eta'' \abs{\sigma}^2} \,\d x \,\d s
	+ \int_0^t \int_{\T^d} 
		\overline{\eta' \sigma} \,\d x \,\d W.
\end{aligned}
\end{equation}

Next,  we derive an equation for $\eta(u)$, 
for which we first need an equation for $u$. 
To do this we take a weak limit of \eqref{eq:semilinear_ste_n}. 
On the assumptions of the 
theorem, except for the stochastic integral, 
the appropriate weak limit holds with 
$(u, u_0, b, f)$ in place $(u_n, u_{0,n}, b_n, f_n)$ 
in \eqref{eq:semilinear_ste_n}. 

Since $u_n \rightharpoonup u$ in 
$L^p(\Omega \times[0,T] \times \T^d)$ 
and hence is bounded in $L^p(\Omega \times [0,T];L^2(\T^d))$, 
and $\sigma$ is of sublinear growth, we may assume that 
$$
\sigma(u_n) \rightharpoonup \overline{\sigma} \quad
\text{in $L^p(\Omega\times [0,T];L^2(\T^d))$}.
$$
We shall invoke Theorem \ref{thm:main_x1} 
to prove the convergence 
of the stochastic integral. 

Recall that $\sigma$ satsifes 
$$
\abs{\sigma(v)} \lesssim 1 + \abs{v}, 
\quad 
\abs{\sigma'(v)} \lesssim 1, 
\quad
\abs{\sigma''(v)} \lesssim 1 \wedge |v|^{p - 3},
$$
Setting $s = t - h$, $\vartheta = \sigma$ 
(cf. \eqref{eq:beta_assump1} and \eqref{eq:beta_assump2}), 
and $\psi =\varphi$, in 
\eqref{eq:renorm_semilinear}, we can 
estimate as in \eqref{eq:ste_translation_1} 
-- \eqref{eq:ste_translation_5} to get 
$$
\Ex \int_h^T \abs{\,\int_{\T^d} \varphi(x) 
	\bk{\vartheta(u_n(t)) - \vartheta(u_n(t - h))}\,\d x} \d t 
	= O(h^{1/2}), \quad \text{uniformly in $n$},
$$
from Lemma \ref{thm:timetranslation}. 
Hence, Theorem \ref{thm:main_x1} gives us
\begin{align*}
	\int_0^t \int_{\T^d} \varphi \sigma(u_n) \,\d x \,\d W_n 
	\tonweak \int_0^t \int_{\T^d} \varphi \overline{\sigma} \,\d x \,\d W 
	\quad \text{in $L^2(\Omega)$},
\end{align*}
pointwise in $t$. We can easily turn this 
into weak convergence in $L^2(\Omega \times [0,T])$ 
by a uniform-in-$n$ $L^2(\Omega \times [0,T])$ bound. 
By Lemma \ref{thm:wkcv_ae} we find that 
there is a version of $u$, still denoted by $u$ 
such that the following equation holds 
weakly in $x$, $(\omega, t)$-a.e.:
\begin{align}\label{eq:weaklimit_equ_u}
	\d u + {\rm div} \bk{bu}\,\d t = f\,\d t + \overline{\sigma} \,\d {W}.
\end{align}
In order to show that $u$ is a weak solution, 
what is required now is that 
$$
\overline{\sigma} = \sigma(u), \text{ a.e.~in 
$\Omega \times [0,T] \times \T^d$}.
$$
This will be a by-product of showing 
that $\eta(u) = \overline{\eta}$ a.e. 
(and therefore of the strong convergence 
$u_n \to u$). 

We can mollify \eqref{eq:weaklimit_equ_u} 
and derive an analogue of \eqref{eq:renorm_semilinear2} 
where $\vartheta = \eta$, and the subscripts $n$ and 
the dissipation terms are absent. 
The coefficients $(b, {\rm div}\,(b), f)$ reside 
in the spaces of convergence 
of each of the coefficients $(b_n, {\rm div} (b_n), f_n)$.
It is then possible using these inclusions to 
take the $\delta \downarrow 0$ 
limit by the dominated convergence 
theorem to get the $\mathbb{P}\otimes \d t$-a.e.~equality:
\begin{equation}\label{eq:renorm_semilinear3}
\begin{aligned}
& \int _{\T^d} \psi(x) \bk{\eta(u(t)) - \eta(u_0)}\,\d x
- \int_0^t \int_{\T^d}\psi \eta'(u) f\,\d x \,\d s \\
&\quad\,\, +  \int_0^t \int_{\T^d }\psi \big[ {\rm div}\,(b)\bk{\eta'(u) u 
	- \eta(u)} + {\rm div}\,(b  u)\big]\,\d x \,\d s \\
& =\int_0^t \int_{\T^d} \psi \eta'(u) \overline{\sigma} \,\d x  \,\d W 
+\frac12  \int_0^t \int_{\T^d} \psi \eta''(u)\abs{ \overline{\sigma}}^2\,\d x \,\d s.
\end{aligned}
\end{equation}
In particular, the stochastic integral term 
attains the appropriate limit by the 
standard \cite[Lemma 2.1]{Debussche:2011aa} 
as $\int_{\T^d} \psi \eta''(u_\delta) 
	\bk{\abs{\overline{\sigma}}^2}_\delta\,\d x
\to \int_{\T^d} \psi \eta''(u) \abs{\overline{\sigma}}^2\,\d x$ 
in $L^2([0,T])$ a.s., as in \eqref{eq:temam1} above.

We now compare $\overline{\eta}$ and 
$\eta(u)$ by subtracting 
the integrated form of 
\eqref{eq:renorm_semilinear3} (with $\psi \equiv 1$) 
from \eqref{eq:u_n_limit_equ_ste}. 
This gives us:
\begin{equation}\label{eq:gronwall1_ste}
\begin{aligned}
&\int_{\T^d} \overline{\eta}(t) - \eta(u(t)) \,\d x 
+ \int_0^t \int_{\T^d} 
	\Big(\overline{\bk{\eta' u - \eta}} 
		- \bk{\eta'(u)u - \eta(u)} \Big) \,{\rm div}\,(b) \,\d x\,\d s\\
&\qquad\le  \int_0^t \int_{\T^d} 
	\bk{\overline{\eta'} - \eta'(u)} f\,\d x 
		+ \frac12\int_0^t \int_{\T^d} 
		{\overline{\eta'' \abs{\sigma}^2}
			- \eta''(u) \abs{\overline{\sigma}}^2} \,\d x \,\d s\\
&\qquad\qquad + \int_0^t \int_{\T^d} 
		{\overline{\eta' \sigma} 
		- \eta'(u) \overline{\sigma}} \,\d x \,\d W.
\end{aligned}
\end{equation}

Using $\eta(v) =  \frac12 v^2$, 
\begin{align}\label{eq:gronwall2_ste}
\overline{\bk{\eta' u - \eta}} 
		- \bk{\eta'(u)u - \eta(u)} 
		 = \overline{\eta } -\eta(u), 
\qquad
{\overline{\eta'} - \eta'(u)} = u - u = 0.
\end{align}

Since $\eta'' = 1$, 
\begin{align*}
{\overline{\eta'' \abs{\sigma}^2}
			- \eta''(u)\abs{ \overline{\sigma}}^2} 
&= {\overline{ \abs{\sigma}^2}
			-\abs{ \overline{\sigma}}^2}
=  \int_{\R} \abs{\sigma(v) - \overline{\sigma}}^2 
		\,\nu_{\omega, t, x}(\d v), 
\end{align*}
where $\nu_{\omega, t ,x}$ is the Young's 
measure at $(\omega ,t ,x )$ characterising 
the weak convergence of $u_n \rightharpoonup u$ 
\cite[Theorem 6.2]{Ped1997}. 
For any $Z \in \bk{L^2(\nu_{\omega, t, x})}^k$, 
we have the elementary inequality for 
the variance:
\begin{align*}
&\int_{\R} \abs{Z(v) - \int_{\R} Z(w) 
	\,\nu_{\omega, t, x}(\d w)}^2\, \nu_{\omega, t, x}(\d v) \\ 
&\qquad\quad= \int_{\R} \abs{Z}^2(v) \nu_{\omega, t, x}(\d v) 
	- \abs{ \int_{\R} Z(v) \nu_{\omega, t, x}(\d v) }^2
\le  \int_{\R} \abs{Z}^2(v) \nu_{\omega, t, x}(\d v).
\end{align*}
Let $L$ be the maximum (global) Lipschitz constant 
among $\sigma$. Applying the variance inequality with 
$Z(v) = \sigma(v) - \sigma(u(\omega, t, x))$, 
and using $\int_{\R} v \,\nu_{\omega, t, x}(\d v)
	 = u(\omega, t, x)$, we get
\begin{equation}\label{eq:gronwall3_ste}
\begin{aligned}
\int_{\R} \abs{\sigma(v) - \overline{\sigma}}^2 
	&\,\nu_{\omega, t, x}(\d v) 
 \le \int_{\R} \abs{\sigma(v) - \sigma(u(\omega, t, x))}^2 
 	\,\nu_{\omega, t, x}(\d v)\\
& \le L^2 \int_{\R} \abs{v - u(\omega, t, x)}^2 
	\,\nu_{\omega, t, x}(\d v) 
= 2L^2\big( \overline{\eta} - \eta(u)\big).
\end{aligned}
\end{equation}

Putting \eqref{eq:gronwall2_ste} and 
\eqref{eq:gronwall3_ste} back in 
\eqref{eq:gronwall1_ste}, and taking 
an expectation, we find:
\begin{equation}\label{eq:gronwall4_ste}
\begin{aligned}
\Ex \int_{\T^d} \overline{\eta}(t) - \eta(u(t)) \,\d x 
+ \Ex \int_0^t \int_{\T^d} 
	\big( \overline{\eta} &- \eta(u)  \big) \,{\rm div}\,(b) \,\d x\,\d s\\
&\le  L^2 \Ex \int_0^t \int_{\T^d} 
		 \overline{\eta} - \eta(u) \,\d x \,\d s.
\end{aligned}
\end{equation}
By convexity, we already know that 
$\overline{\eta} - \eta(u) \ge 0$. 
Gronwall's lemma applied 
to the inequality above then tells us 
that $\overline{\eta} = \eta(u)$, 
$\mathbb{P}\otimes \d t \otimes \d x$-a.e. 

All the analysis foregoing applies to 
$\eta_\lambda(v) := \eta(v - \lambda) = \frac12 \bk{v - \lambda}^2$ 
in place of $\eta$, giving $\overline{\eta_\lambda} = \eta_\lambda(u)$, 
where $\overline{\eta_\lambda}$ is the weak limit 
of $\eta_\lambda(u_n)$ in $L^p(\Omega \times [0,T] \times \T^d)$. 
Using again the Young's measure 
$\{\nu_{\omega, t, x}\}$ charactersing the convergence 
of $u_n$, we find that 
$$
\int_\R \eta(v - \lambda)  \nu_{\omega, t, x}(\d v) 
= \overline{\eta_\lambda}  = \eta_\lambda(u(\omega, t, x)) \ge 0,
$$
with equality when $\lambda = u(\omega, t, x)$. 
The integrand $\eta(v - \lambda)$ is strictly 
positive away from $v = \lambda$. Therefore, 
$\nu_{\omega, t, x}$  must be supported on 
$\{u(\omega, t, x)\}$, and 
is a Dirac mass. By, e.g., \cite[Proposition 6.12]{Ped1997}, 
we conclude that $u_n \to u$ in 
$L^{p}(\Omega \times [0,T] \times \T^d)$. 
\end{proof}
\begin{remark}\label{rem:pf_artifact}
Suppose $p < 3$ and $\abs{\sigma''} \lesssim 1$ 
but \eqref{eq:sigma_technical} is unavailable. 
In order to get \eqref{eq:beta_assump1} 
and \eqref{eq:beta_assump2} with 
$\vartheta = \eta' \sigma$ as in \eqref{eq:ste_translation_1} 
-- \eqref{eq:ste_translation_5}, we 
would have needed to use a convex, linearly growing 
approximation ${\eta}_\ell$ 
with bounded first and second derivatives 
(see,e.g., \cite[Equation (4.2)]{GHKP-inviscid}) 
in place of $\eta$ to derive inequalities for 
the weak limits $\overline{{\eta}_\ell}$. 
This function will give us
$({\eta}_\ell)'\sigma \in C^{1,1}(\R;\R^k)$, which is  
needed in order to derive bounds analogous to 
\eqref{eq:ste_translation_1} -- \eqref{eq:ste_translation_5}.
We can then derive the following 
in place of \eqref{eq:gronwall1_ste}:
\begin{align*}
	&\int_{\T^d} \overline{{\eta}_\ell}(t) - \eta(u(t)) \,\d x 
	+ \int_0^t \int_{\T^d} 
	\Big(\overline{\bk{\bk{{\eta}_\ell}' u - {\eta}_\ell}} 
	- \bk{\eta'(u)u - \eta(u)} \Big) \,{\rm div}\,(b) \,\d x\,\d s\\
	&\le  \int_0^t \int_{\T^d} \bk{\overline{\bk{{\eta}_\ell}'} - \eta'(u)} f\,\d x 
	+ \frac12\int_0^t \int_{\T^d} 
	{\overline{\bk{{\eta}_\ell}'' \abs{\sigma}^2}
	- \eta''(u) \abs{\overline{\sigma}}^2} \,\d x \,\d s\\
	&\qquad + \int_0^t \int_{\T^d} 
	{\overline{\bk{{\eta}_\ell}' \sigma} 
	- \eta'(u) \overline{\sigma}} \,\d x \,\d W.
\end{align*}
It is possible to derive managable expressions 
for ${\eta}_\ell(u_n) - \eta(u_n)$, $\bk{{\eta}_\ell}'(u_n) - \eta'(u_n)$, 
etc.~(see \cite[Remark 7.11]{GHKP-inviscid}), 
which are supported on the small sets 
$\{\abs{u_n} \ge \ell\}$. Isolating these as error 
terms appended onto \eqref{eq:gronwall1_ste}, 
the uniform-in-$n$  bound 
$$
(\mathbb{P}\otimes \d x \otimes \d t)(\{\abs{u_n} \ge \ell\}) 
\le \ell^{-p} \Ex\norm{u_n}_{L^p([0,T]\times \T^d) }^p 
\lesssim \ell^{-p}
$$
gives us the a.e.~equality $\eta(u) = \overline{\eta}$ 
in the $\ell \uparrow \infty$ limit, similar to the 
argument in \cite[Proof of Theorem 7.1]{GHKP-inviscid}. 
That will imply the strong convergence $u_n \to u$ in 
$L^p_{\omega, t ,x}$ as in the proof of 
Theorem \ref{thm:ste_semilinear} above. 
Using this linearly growing nonlinearity $\eta_\ell$, 
we can also relax the assumption \eqref{eq:sigma_technical} 
on the initial data to convergence in $L^p_{\omega, t}$, 
where this was used in \eqref{eq:u_0_L2p_bound}.

The approximation ${\eta}_\ell$
will also allow us to relax to $q_2 \ge 2$ in the 
assumption $u_n \rightharpoonup u$ in 
$L^{q_1}_{\omega, t} L^{q_2}_x$, from 
the present assumption of $q_2 > 2$, which is 
necessary to keep the bound \eqref{eq:Lp/2_bound_ste} 
from being spatially in $L^1_x$, where 
a Young's measure representation may not exist. 
For the sake 
of expository clarity, we adopted $p > 2$ to avoid keeping 
track of an extra parameter in order to take 
the $\ell \uparrow \infty$ limit. 

\end{remark}

The convergence theorems of Section \ref{sec:corollaries} 
are important even in the linear, additive noise case. 
We consider a transport equation with 
noise $\sigma_n \,\d W_n$, where $\sigma_n \to \sigma$ 
in $L^p([0,T]\times \T^d;\R^k)$. 
Our convergence theorems are required 
in the renormalised equation for $\d \eta(u_n)$, 
$\eta(v) = \frac12 v^2$, to obtain the limit 
for $\int_0^t \eta'(u_n) \sigma_n\,\d W_n$ 
(cf.~\eqref{eq:eta(u_n)_ste} above and 
see \eqref{eq:eta(u_n)_steadd} below). 
Suitably modifying the definition of 
solutions to take into account the 
additive nature of the noise, we can 
establish the following theorem:
\begin{theorem}\label{thm:ste_add}
Fix $p > 2$. Set $p' = p/\bk{p - 1}$ and 
$p'' = p/\bk{p - 2}$, respectively the 
H\"older conjugates of $p$ and $p/2$. 
Let $\{u_n\}_{n = 1}^\infty$ 
be a sequence of weak solutions to 
\begin{align*}
\d u_n + {\rm div}\,(b_n u_n)\,\d t
  = f_n\,\d t  + \frac1n \Delta u_n\,\d t  
  	+ \sigma_n \,\d{W}_n, 
\quad u_n(0) = u_{0,n},
\end{align*}
for which $u_n\rightharpoonup u$ in 
$L^p(\Omega \times [0,T] \times \T^d)$. 
Suppose
\begin{itemize}
\item[(i)]
	$\sigma_n \to \sigma$ in 
	$L^{p}([0,T] \times \T^d;\R^k)$, 

\item[(ii)]  $\{(\pd_t \sigma_n, \nabla \sigma_n)\}
	\subset_b L^1([0,T];L^{p'}(\T^d;\R^k)) 
		\times  L^2([0,T]\times \T^d;\R^{d\times k})$
\item[(iii)]
	$\{b_n\}_{n \ge 1} \subset_b 
		L^1([0,T];W^{1,p''}(\T^d;\R^d))$, and \\
	$\{{\rm div}\,(b_n)\}_{n \ge 1}\subset_b 
		L^1([0,T];L^\infty(\T^d))$,  
\item[(iv)] $b_n \to b$ in $L^1([0,T];L^{p'}(\T^d;\R^d))$, and \\
	${\rm div}\,(b_n) \to {\rm div}\,(b)$, in 
	$L^1([0,T];W^{1,p''}(\T^d))$,
\item[(v)] $f_n \to f$ in $L^1([0,T];L^{p}(\T^d))$, 
$u_{0,n} \to u_0$ in $L^{p}(\Omega \times \T^d)$, and 
\item[(vi)]
	$W_n \to W$ a.s.~in $C([0,T];\R^k)$.
\end{itemize}
Then $u$ is a weak solution of
$$
\d u + {\rm div}\,(b u)\,\d t  = f\,\d t  + \sigma \,\d{W}, \quad u(0) = u_{0,n},$$
and 
$u_n \to u$ in $L^p(\Omega \times [0,T] \times \T^d)$. 
\end{theorem}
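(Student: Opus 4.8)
The plan is to mirror the proof of Theorem \ref{thm:ste_semilinear}, exploiting the simplifications afforded by additive, strongly convergent noise. First I would produce the limiting equation for $u$ itself: taking the weak limit of the (spatially mollified) equation, the transport and source terms pass by the strong convergences (iv)--(v), the viscosity $\tfrac1n\Delta u_n$ vanishes since $\tfrac1n\to 0$ while $u_n$ stays $L^p$-bounded, and---because $\sigma_n\to\sigma$ \emph{strongly} in $L^p([0,T]\times\T^d)\subset L^2([0,T];L^2(\T^d))$---the stochastic term converges to $\int_0^t\int_{\T^d}\varphi\,\sigma\,\d x\,\d W$ directly by \cite[Lemma 2.1]{Debussche:2011aa}. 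Thus $u$ solves the advertised equation with noise coefficient $\sigma$; no weak limit $\overline{\sigma}$ has to be identified, so $u$ is \emph{a priori} a weak solution, and the real work is to upgrade $u_n\rightharpoonup u$ to strong convergence.

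For the strong convergence I would run the propagation-of-compactness argument with $\eta(v)=\tfrac12 v^2$. Applying the It\^o formula to the mollified equation and letting the mollification tend to $0$ (the DiPerna--Lions commutator disappearing exactly as in \eqref{eq:eta(u_n)_ste}), the renormalised identity \eqref{eq:eta(u_n)_steadd} for $\eta(u_n)$ carries the It\^o correction $\tfrac12\eta''(u_n)\abs{\sigma_n}^2=\tfrac12\abs{\sigma_n}^2$ and the stochastic integrand $\eta'(u_n)\sigma_n=u_n\sigma_n$. The crucial structural point, absent in the multiplicative case, is that $\abs{\sigma_n}^2\to\abs{\sigma}^2$ \emph{strongly}, while $u_n\sigma_n\rightharpoonup u\sigma$ only \emph{weakly} (a weak sequence $u_n$ times a strong sequence $\sigma_n$).

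The heart of the proof is the convergence of the martingale term $\int_0^t\int_{\T^d}u_n\sigma_n\,\d x\,\d W_n$, for which I would invoke Theorem \ref{thm:main_x2} with $V_n=u_n\sigma_n$. The weak limit $u\sigma$ and the bound $\Ex\norm{u_n\sigma_n}_{L^p([0,T];L^1(\T^d))}^p\lesssim 1$ are immediate---and, notably, they require only $u_{0,n}\to u_0$ in $L^p$ (not $L^{2p}$ as in Theorem \ref{thm:ste_semilinear}), precisely because $\sigma_n$ is deterministic and $u_n\sigma_n$ is thus only linear in $u_n$. \textbf{The main obstacle is the uniform $L^1$ temporal translation estimate for $u_n\sigma_n$.} In the multiplicative case the integrand $\eta'(u_n)\sigma(u_n)=\vartheta(u_n)$ was a pure renormalisation, so Lemma \ref{thm:timetranslation} applied verbatim; here $u_n\sigma_n$ is a product carrying the separate space--time factor $\sigma_n$ of limited regularity, and a direct application of Lemma \ref{thm:timetranslation} to $u_n\sigma_n$ fails, since the transport term would force the non-integrable pairing of $\sigma_n\in L^p_t$ against $b_n\in L^1_t$. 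I would instead split the increment as
\[
u_n(t)\sigma_n(t)-u_n(t-h)\sigma_n(t-h)=\sigma_n(t)\bk{u_n(t)-u_n(t-h)}+u_n(t-h)\bk{\sigma_n(t)-\sigma_n(t-h)}.
\]
The second term is $O(h)$ on writing $\sigma_n(t)-\sigma_n(t-h)=\int_{t-h}^t\pd_t\sigma_n$ and using $\pd_t\sigma_n\in_b L^1([0,T];L^{p'}(\T^d))$ from (ii) together with the $L^p$ bound on $u_n$ and Fubini. For the first term I would test the equation for $u_n$ over $[t-h,t]$ against the \emph{time-frozen} function $\psi\sigma_n(t)$: because $\sigma_n(t)$ and the coefficients $b_n(s),\Div b_n(s)$ then live at different times, the double integration $\int_h^T\int_{t-h}^t(\cdot)\,\d s\,\d t$ confronts the $L^1_t$-mass of $(b_n,\Div b_n)$ with an $h$-window paired against $\sigma_n\in L^p_t$, yielding an $O(h^{1/p'})$ bound; the derivative produced by integrating the transport term by parts is absorbed by $\nabla\sigma_n\in L^2([0,T]\times\T^d)$ (again (ii)), $\nabla u_n$ never appearing thanks to the weak formulation, and the viscous contribution scaling harmlessly like $n^{-1/2}$. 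This is exactly why the extra assumptions (ii) on $\sigma_n$ are imposed.

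Finally I would close as in Theorem \ref{thm:ste_semilinear}. The $\tfrac1n$-dissipation has the favourable sign, so passing to the limit in the $\eta(u_n)$ identity via Lemma \ref{thm:wkcv_ae} yields an inequality for $\overline{\eta}$, while renormalising the limit equation for $u$ (its It\^o term handled by \cite[Lemma 2.1]{Debussche:2011aa}) yields the companion equation for $\eta(u)$. Subtracting, \emph{all} noise contributions cancel identically: the It\^o corrections ($\tfrac12\abs{\sigma}^2$ on both sides) and the stochastic integrals ($\int u\sigma\,\d x\,\d W$ on both sides, since $\overline{u_n\sigma_n}=u\sigma=\eta'(u)\sigma$). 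Hence, in contrast with the multiplicative case, no Young-measure variance term \eqref{eq:gronwall3_ste} survives, and one is left with $\Ex\int_{\T^d}(\overline{\eta}-\eta(u))(t)\,\d x+\Ex\int_0^t\int_{\T^d}(\overline{\eta}-\eta(u))\,\Div b\,\d x\,\d s\le 0$. As $\overline{\eta}-\eta(u)\ge 0$ by convexity and $\Div b\in L^1_tL^\infty_x$, Gronwall forces $\overline{\eta}=\eta(u)$ a.e.; repeating with $\eta_\lambda(v)=\tfrac12(v-\lambda)^2$ and the Young-measure/Dirac-mass argument (\cite[Proposition 6.12]{Ped1997}) upgrades this to $u_n\to u$ in $L^p(\Omega\times[0,T]\times\T^d)$. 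The reductions needed when $p<3$ are handled by the linearly growing truncations of Remark \ref{rem:pf_artifact}.
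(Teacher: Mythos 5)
Your proposal is correct and follows the same architecture as the paper's proof: derive the limit equation for $u$ (whose stochastic term passes directly by \cite[Lemma 2.1]{Debussche:2011aa} since $\sigma_n\to\sigma$ strongly, so no $\overline{\sigma}$ needs identifying), renormalise with $\eta(v)=\tfrac12 v^2$, pass to the limit in \eqref{eq:eta(u_n)_steadd} using Theorem \ref{thm:main_x2} with $V_n=u_n\sigma_n$, note that the It\^o corrections and stochastic integrals cancel upon subtracting the equation for $\eta(u)$, and close with Gronwall and the Young-measure/Dirac argument. The one genuine divergence is the uniform $L^1$ time-translation estimate for $u_n\sigma_n$. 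The paper writes $\d\bk{u_n\sigma_n}=\pd_t\sigma_n\,u_n\,\d t+\sigma_n\,\d u_n$ and feeds the resulting coefficients into Lemma \ref{thm:timetranslation}, bounding the transport contribution by the product $\norm{\nabla\bk{\psi\sigma_n}}_{L^1_tL^{p'}_x}\norm{b_n}_{L^1_tL^{p''}_x}$ --- a pairing of two low-time-integrability quantities evaluated at the same running time. You instead freeze $\sigma_n$ at the outer time $t$ and split the increment as $\sigma_n(t)\bk{u_n(t)-u_n(t-h)}+u_n(t-h)\bk{\sigma_n(t)-\sigma_n(t-h)}$, so that Fubini confronts the $h$-window of $\sigma_n\in L^p_t$ with the full $L^1_t$ mass of $b_n$, giving $O(h^{1/p'})$ in place of the paper's $O(h^{1/2})$; both are $o_{h\downarrow 0}(1)$ uniformly in $n$, which is all that \eqref{eq:Vn-assump_XIII} requires. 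Your variant decouples the time integrabilities of $\sigma_n$ and $b_n$ and is, if anything, the more careful route under hypotheses (ii)--(iii); the price is that one must justify (by density) testing the weak formulation against the merely Sobolev-regular $\psi\sigma_n(t)$ and handle the viscous term via the $n^{-1/2}$ scaling you describe. Your side observations --- that only $u_{0,n}\to u_0$ in $L^p_\omega$ is needed because the integrand is linear in $u_n$, and that no Young-measure variance term survives --- also match the paper.
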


\begin{proof}
We shall avail ourselves of the same 
strategy we used in the proof of Theorem 
\ref{thm:ste_semilinear}, and borrow 
heavily from the machinery described there.
First, we argue as in \eqref{eq:p_energybound} 
to achieve the same bound with 
$\sigma(u_n)_\delta$ replaced 
by $\sigma_{n,\delta}$ in the derivation. 
Let $\eta(v) = \frac12 v^2$ again. 
By arguing as in \eqref{eq:eta(u_n)_ste}, 
we get the analogous inequality
\begin{equation}\label{eq:eta(u_n)_steadd}
\begin{aligned}
	\int_{\T^d} &\eta(u_n(t)) -  \eta(u_{0,n})\,\d x 
	+\int_0^t \int_{\T^d} \bk{\eta'(u_n) u_n - \eta(u_n)}\,
	{\rm div}\bk{b_n} \,\d x \,\d s \\
	& \le\int_0^t  \int_{\T^d} \eta'(u_n(t)) f_n 
	+ \frac12 \abs{\sigma}^2_n \,\d x \,\d s 
	+ \int_0^t \int_{\T^d}
	 \eta'(u_n(t)) \sigma_n\,\d x \,\d W_n.
\end{aligned}
\end{equation}

We now seek to take the limit as $n \to \infty$ 
to get the equation for $\overline{\eta}$. 
By the bound \eqref{eq:p_energybound} on $u_n$, 
by the form of $\eta$ and the uniqueness 
of weak limits, we have the following weak convergences 
as $n \uparrow \infty$:
\begin{align*}
\bk{\eta(u_n) , \eta'(u_n)u_n,
	\eta'(u_n)}
&	\rightharpoonup \bk{\overline{\eta},
	 2 \overline{\eta}, u  }
	\text{ in $(L^{p/2}(\Omega \times [0,T]\times \T^d))^3$},\\
	\eta'(u_n) \sigma_n &\rightharpoonup u \sigma 
	\text{ in $L^{p/2}(\Omega \times [0,T]\times \T^d;\R^k)$},
\end{align*}along an 
unrelabelled subsequence. 
The convergence of the final element 
of the tuple follows from the 
strong convergence of  $\sigma_n$ 
in $L^p([0,T]\times \T^d;\R^k)$, assumed in (i).

The convergence of \eqref{eq:eta(u_n)_steadd} to 
\begin{equation}\label{eq:etabar_steadd}
\begin{aligned}
\int_{\T^d} &\overline{\eta}(t) -  \eta(u_{0})\,\d x 
	+\int_0^t \int_{\T^d} \overline{\eta' u - \eta}\,\,
		{\rm div}\bk{b} \,\d x \,\d s\\
& \le\int_0^t  \int_{\T^d} u\, f 
	+ \frac12 \abs{\sigma}^2 \,\d x \,\d s
	+ \int_0^t \int_{\T^d}
	 u \sigma \,\d x \,\d W,
\end{aligned}
\end{equation}
follows from the assumed convergences 
on the coefficients $b_n$, $f_n$, $\sigma_n$ 
and the weak convergence $u_n \rightharpoonup u$, 
except in the stochastic integral. 

In the stochastic integral 
$\int_0^t \int_{\T^d}u_n \sigma_n \,\d x \,\d W_n$,  
$ \int_{\T^d}u_n \sigma_n \,\d x$ converges 
only weakly in  $(\omega, t)$ due to the assumed 
weak convergence of $u_n$.  The lack of strong 
temporal compactness compels us to apply 
on of the variants of Theorem \ref{thm:main_x1} here. 
Using the supremum bound \eqref{eq:p_energybound} 
one $u_n$, we find that 
\begin{equation}\label{eq:LpL1_bound_steadd}
\begin{aligned}
	\Ex &\norm{\eta'(u_n) 	
	\sigma_n }_{L^p_tL^1_x}^{q}
	\lesssim  \norm{\sigma_n}_{L^p_tL^{p'}_x}^p\lesssim 1, 
	\quad 
	\Ex \sup_{t \in [0,T]}\norm{u_n(t)}_{L^p_x}^p
	 \lesssim 1.
\end{aligned}
\end{equation}

Next we deduce the temporal translation 
estimate using Lemma \ref{thm:timetranslation}. 
By the product formula,
$$
\d \bk{u_n \sigma_n} 
	= \pd_t \sigma_n \,  u_n\,\d t + \sigma_n\,\d u_n.
$$
We can integrate this against 
any $\psi \in C^1(\T^d)$ to get:
\begin{align*}
&\int_{\T^d} \psi(x) \big( \bk{u_n \sigma_n}(t)   	
	-  \bk{u_n \sigma_n}(t - h)\big) \,\d x\\
& = \int_{t - h}^ t \int_{\T^d}
	\underbrace{\psi \pd_t \sigma_n  u_n}_{\tilde{I}_1(n)}\,\d x\,\d r
+ \int_{t - h}^ t \int_{\T^d} 
	\underbrace{\nabla \bk{\psi \sigma_n} \cdot b_n u_n
 		+ \psi  \sigma_n f_n}_{\tilde{I}_2(n)}\,\d x \,\d r\\
& \quad\,\, - \frac1n \int_{t - h}^ t \int_{\T^d} 
	\underbrace{\nabla\bk{\psi \sigma_n}
		\cdot \nabla u_n}_{-n\tilde{I}_3(n)} \,\d x \,\d r
+ \int_{t - h}^ t \int_{\T^d}  
	\underbrace{\psi \abs{\sigma_n}^2}_{\tilde{I}_4(n)}\,\d x \,\d W_n.
\end{align*}
Using \eqref{eq:p_energybound} and 
the assumed bounds in the theorem 
statement, we can show via Lemma 
\ref{thm:timetranslation} that 
$$
\Ex \int_h^T \abs{\,\int_{\T^d} \psi(x) \big( \bk{u_n\sigma_n}(t)   	
	-  \bk{u_n \sigma_n}(t - h)\big) \,\d x}
	\lesssim h^{1/2}, \quad \text{uniformly in $n$}.
$$
Indeed, 
\begin{align*}
&\norm{\tilde{I}_1(n)}_{L^1(\Omega \times [0,T] \times \T^d)} 
+ \norm{\tilde{I}_2(n)}_{L^1(\Omega \times [0,T] \times \T^d)}\\
&\qquad \le \norm{\psi}_{L^\infty_x} 
	\Big(\norm{\pd_t \sigma_n}_{L^1_tL^{p'}_x} 
	+ \norm{\nabla\bk{\psi \sigma_n}}_{L^1_t L^{p'}_x}
		 \norm{b_n}_{L^1_tL^{p''}_x}\Big) 
		 \Ex \norm{u_n}_{L^\infty_tL^p_x}\\
&\qquad\qquad +  \norm{\psi}_{L^\infty_x}
	\norm{\sigma_n}_{L^p_tL^{2}_x}
	\norm{f_n}_{L^{p'}_tL^2_x} \lesssim 1,
\end{align*}
and
\begin{align*}
	\norm{ \tilde{I}_3(n)}_{L^1(\Omega \times [0,T] \times \T^d)}
	&\le   \norm{\nabla\bk{\psi \sigma_n}}_{L^2([0,T] \times \T^d)} 
	\frac1n\Ex \norm{\nabla u_n}_{L^2([0,T] \times \T^d)}
	\to 0.
\end{align*}
Since $\abs{\sigma_n}^2 \in_b L^2([0,T];L^1(\T^d))$ are deterministic, 
$\norm{\tilde{I}_4(n)}_{L^2(\Omega \times [0,T] ;L^1(\T^d))} \lesssim 1$.
By choosing $\psi \equiv 1$, 
Theorem  \ref{thm:main_x2} then 
implies the convergence of the stochastic
integral in \eqref{eq:eta(u_n)_steadd} weakly in 
$L^2(\Omega)$ and we get \eqref{eq:etabar_steadd}.

With $\eta(v) = \frac12 v^2$, the derivation 
of the equation for $\eta(u)$ follows the 
same procedure as that leading up to \eqref{eq:renorm_semilinear3}, 
but is more straightforward. In particular, 
the convergence of stochastic integrals 
in this process only relies on 
\cite[Lemma 2.1]{Debussche:2011aa}, 
seeing as 
$\int_{\T^d}\psi \sigma_\delta u_\delta \,\d x 
\to \int_{\T^d}\psi u \sigma\,\d x$ a.s.~in 
$L^2([0,T])$ by the assumed inclusion 
$\sigma \in L^p([0,T]\times \T^d)$ 
and \eqref{eq:LpL1_bound_steadd} 
(cf.~\eqref{eq:temam1}).

We have that for any $\psi \in C^2(\T^d)$,
\begin{equation}\label{eq:renorm_ste_additive}
\begin{aligned}
&\int_{\T^d} \psi(x) \bk{\eta(u) - \eta(u_0)}\,\d x  
	+ \frac12 \int_0^t  \int_{\T^d} \Big[ \psi\,  {\rm div} (b)  {u}^2
	+  \nabla \psi \cdot {b {u}^2}\Big]\,\d x \,\d s \\
& = \int_0^t \int_{\T^d}\psi u f\,\d x \,\d s  
	+ \int_0^t \int_{\T^d} \psi u \sigma \,\d x\,\d W 
 	+\frac12 \int_0^t  \int_{\T^d}\psi  \abs{\sigma}^2\,\d x \,\d s.
\end{aligned}
\end{equation}
\vspace{.2cm}

Subtracting \eqref{eq:renorm_ste_additive} 
(with $\psi \equiv 1$) from 
\eqref{eq:etabar_steadd}, we find
\begin{align*}
\int_{\T^d} &\overline{\eta}(t) - \eta(u(t)) \,\d x 
	+\int_0^t \int_{\T^d} \bk{\overline{\eta' u - \eta} 
		- \frac12 u^2 }
		{\rm div}\bk{b} \,\d x \,\d s 
 \le 0.
\end{align*}
Since $\overline{\eta' u - \eta}
	 = \frac12 \overline{u^2} = \overline{\eta}$, 
we can take an expectation above 
and apply Gronwall's inequality to get
\begin{align*}
\Ex \int_{\T^d} \overline{\eta} - \eta(u) \,\d x = 0, 
	\quad \text{$\d t$-a.e.}
\end{align*}
Since $\overline{\eta} \ge \eta(u)$ 
$(\omega,t,x)$-a.e.~by 
convexity, we arrive at the a.e.~equality 
$\overline{\eta} = \eta(u)$. 
We can then argue as
following \eqref{eq:gronwall4_ste} 
and conclude that 
$u_n \to u$ in 
$L^p(\Omega \times [0,T] \times \T^d)$. 

\end{proof}

\begin{remark}
Under suitable assumptions, our techniques are 
applicable to SPDEs with transport noise. 
Specifically, they can assist in facilitating the passage to the 
limit in stochastic integrals of the form 
$\int_0^T\int_{\T^d}  \sigma_n \nabla u_n\,\varphi\,\d x  
\circ \,\d{W}_n$, where $\circ$ refers to 
the Stratonovich integral, where $\sigma_n$ is $\R^{d \times k}$-valued
\end{remark} 

\section{Stochastic conservation laws}
\label{sec:cl}
In this section, we focus on an application pertaining to 
sequences of stochastic conservation laws \eqref{eq:SCL}, 
as expressed in the form of kinetic equations \eqref{eq:example_spde2}. 
In \eqref{eq:example_spde2}, the driving noise 
processes $W_n$ are $\R^k$-valued Brownian motions. 
The random defect measures $m_n$ in \eqref{eq:example_spde2} 
take values in $M_b^+( [0,T] \times \T^d\times \R)$,
the space of non-negative Radon measures. Moreover, the solutions 
$\chi_n$ to \eqref{eq:example_spde2} trivially satisfy $0\le \chi_n\le 1$.
The interpretation of the kinetic equations 
\eqref{eq:example_spde2} is in the 
It{\^o} sense and is considered a.s.~in a weak formulation over 
the domain $\T^d \times \R$ \cite{Debussche:2010fk}.

By the It\^o isometry, convergence of stochastic integrals 
in $L^2_\omega$, assuming $W_n = W$ for each $n$, 
essentially translates to weak convergence of the integrands 
across the variables $(\omega, t, x)$. 
This specific case is straightforward and has been 
treated in, e.g., \cite{Debussche:2010fk,Dotti:2018aa,Dotti:2020}. 
In contrast, our study extends to the scenario involving 
a sequence of potentially distinct Brownian motions 
$\{W_n\}_{n \ge 1}$.

\begin{theorem}\label{thm:cl_weakstability}
Fix $ p > 2$. Let $\chi_n$ be the kinetic 
solution to \eqref{eq:example_spde2} 
and suppose that 
\begin{itemize}
\item[(i)]
	$\sigma_n$ are bounded in $W^{3,1}_\loc(\R;\R^k)$,

\item[(ii)] $F_n \to F$ in  $W^{1,1}_\loc(\R;\R^d)$, 
	and $\sigma_n \to \sigma$  in $W^{1,1}_\loc(\R;\R^k)$,

\item[(iii)] $m_n([0,t]) \rightharpoonup m([0,t])$ 
		in $L^1(\Omega \times [0,T];M_b^+(\T^d\times \R))$, in 
		the sense that for every $\varphi \in C^\infty_c(\T^d \times \R)$ 
		and every $Y \in L^\infty(\Omega \times [0,T])$, 
		$$
		\Ex \left[\int_0^T Y \int_{\T^d \times \R} 
			\varphi\, m_n([0,t],\d x,\d \xi)\,\d t \right] 
		\to \Ex \left[\int_0^T Y \int_{\T^d \times \R} 
			\varphi\, m([0,t],\d x,\d \xi)\,\d t\right], 
		$$
\item[(iv)] $\chi_{0,n} \rightharpoonup \chi_0$ in 
		$L^p(\Omega \times \T^d;\mathcal{D}'(\R))$, and 

\item[(v)] $W_n \to W$ in $C([0,T];\R^k)$ a.s.
\end{itemize}
Assume $\chi_n \xrightharpoonup{\star} \chi$ in 
$L^\infty(\Omega \times [0,T] \times \T^d \times \R)$. 
Then $\chi$ satisfies 
\begin{equation}\label{eq:conservation3}
\begin{aligned}
	\d \chi +\big( F'(\xi) & \cdot \nabla_x \chi 
	 - \pd_{\xi} m \big)\,\d t
- \sigma(\xi) \pd_\xi \chi \,\d{W}
	- \frac12  \pd_{\xi} \bk{\abs{\sigma(\xi)}^2 \,\pd_{\xi} \chi}\,\d t=0,
\end{aligned}
\end{equation}
a.s.~ in the sense of It\^o for a.e.~$t \in [0,T]$, and in 
$\mathcal{D}'(\T^d \times \R)$ with $\chi(0) = \chi_0$. 
\end{theorem}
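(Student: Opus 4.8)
The plan is to test the kinetic equation \eqref{eq:example_spde2} against a fixed $\varphi \in C^\infty_c(\T^d\times\R)$, integrate by parts in $(x,\xi)$, and pass to the limit term by term in the resulting (temporally distributional) weak formulation. Writing out this weak form, the flux term produces $-\int_0^t\int_{\T^d\times\R}\chi_n\, F_n'(\xi)\cdot\nabla_x\varphi\,\d x\,\d\xi\,\d s$; the It\^o correction produces, after one integration by parts, $\tfrac12\int_0^t\int_{\T^d\times\R}\pd_\xi\varphi\,\abs{\sigma_n}^2\pd_\xi\chi_n\,\d x\,\d\xi\,\d s$; the defect term produces $-\int_{\T^d\times\R}\pd_\xi\varphi\, m_n([0,t],\d x,\d\xi)$; and the stochastic term produces $\int_0^t V_n\,\d W_n$ with the $\R^k$-valued integrand
\[
V_n(s) = \int_{\T^d\times\R}\pd_\xi(\varphi\sigma_n)\,\chi_n(s)\,\d x\,\d\xi
= -\int_{\T^d\times\R}\varphi\,\sigma_n(\xi)\,\pd_\xi\chi_n(s)\,\d x\,\d\xi .
\]

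First I would dispatch the non-stochastic terms. Since $\varphi$ is compactly supported in $\xi$ and, by (i) with the one-dimensional embedding $W^{3,1}_\loc\hookrightarrow C^2_\loc$, the sequences $\sigma_n,\sigma_n',\sigma_n''$ are uniformly bounded on $\supp\varphi$, while (ii) gives $F_n'\to F'$, $\sigma_n\to\sigma$ and $(\abs{\sigma_n}^2)'\to(\abs{\sigma}^2)'$ strongly in $L^1_\loc$, the coefficients paired against $\chi_n$ converge strongly in $L^1(\T^d\times\R)$. Combined with $\chi_n\xrightharpoonup{\star}\chi$ and the fact that a strongly-$L^1$-convergent coefficient times a weak-$\star$-$L^\infty$-convergent factor passes to the product limit, each deterministic term converges weakly in $L^2(\Omega\times[0,T])$ to its natural limit; the defect term is handled directly by (iii) with test function $\pd_\xi\varphi$, and the initial datum by (iv). Predictability of the limit $\chi$, hence of the limiting integrand, follows because the adapted processes form a weakly closed subspace.

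The crux is the stochastic term, where $V_n$ converges only weakly in time, as $\chi_n$ has no temporal compactness. I would verify the two hypotheses of Theorem \ref{thm:main_x1} with $X=\R$, $m=1$, $\beta=1$. For the weak convergence \eqref{eq:V_n_to_V_wk}: $0\le\chi_n\le1$ makes $V_n$ bounded in $L^\infty(\Omega\times[0,T];\R^k)\subset L^p$ for every $p>2$, and splitting $V_n-V$ into a coefficient-difference part (controlled by $\norm{\pd_\xi(\varphi(\sigma_n-\sigma))}_{L^1_{x,\xi}}\to0$ uniformly, since $\chi_n$ is bounded) and a part handled by $\chi_n\xrightharpoonup{\star}\chi$ tested against $\pd_\xi(\varphi\sigma)\in L^1$, gives $V_n\rightharpoonup V:=\int\pd_\xi(\varphi\sigma)\chi\,\d x\,\d\xi$ in $L^p(\Omega\times[0,T];\R^k)$. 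For the mean $L^1$ translation estimate \eqref{eq:Vn-assump_XI}, I would invoke Lemma \ref{thm:timetranslation} applied to $\varrho_n=\chi_n$ with the compactly supported, $n$-dependent test function $\tilde\psi_n=\pd_\xi(\varphi\sigma_n)$, for which $\int\tilde\psi_n\chi_n=-V_n$. Casting \eqref{eq:example_spde2} into the divergence form required by the lemma gives $m_F=m_G=1$: the deterministic coefficients $F_n'(\xi)\chi_n$, $\tfrac12\abs{\sigma_n}^2\pd_\xi\chi_n$ and $m_n$ are bounded in $L^1(\Omega;M_b)$ on $\supp\tilde\psi_n$ (using $0\le\chi_n\le1$, the fact that $\pd_\xi\chi_n$ is a unit Dirac in $\xi$ hence bounded in $M_b$, the local bounds on $\sigma_n,F_n'$, and a uniform total-mass bound on $m_n$), while the stochastic coefficients $\sigma_n\chi_n,\sigma_n'\chi_n$ are bounded in $L^2(\Omega\times[0,T];L^2)$ on that support. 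Crucially, $\norm{\pd^\alpha\tilde\psi_n}_{L^\infty}$ for $\abs{\alpha}\le1$ is uniformly bounded in $n$ precisely because of the $C^2_\loc$ bound from (i), so the lemma yields $\Ex\int_h^T\abs{V_n(t)-V_n(t-h)}\,\d t\lesssim h^{1/2}$ uniformly in $n$.

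With both hypotheses verified, Theorem \ref{thm:main_x1} gives $\int_0^t V_n\,\d W_n\rightharpoonup\int_0^t V\,\d W$ weakly in $L^2(\Omega)$ for each $t$. Assembling the term-by-term limits, testing the full weak formulation against an arbitrary $Y\in L^2(\Omega)$ and against temporal test functions, and appealing to Lemma \ref{thm:wkcv_ae} (equality version) to upgrade the resulting weak identity to a pointwise $\mathbb{P}\otimes\d t$-a.e.\ one, shows that $\chi$ solves \eqref{eq:conservation3} a.s., weakly in $(x,\xi)$, for a.e.\ $t$, with $\chi(0)=\chi_0$. I expect the main obstacle to be the time-translation estimate: one must simultaneously localise in $\xi$ (since $\chi_n$ does not decay there), carry the $n$-dependence of the test function $\tilde\psi_n$ without losing uniformity, and check the measure-valued and $L^2$ bounds on every coefficient of the kinetic equation --- it is exactly here that the regularity budget $W^{3,1}_\loc$ in (i) is consumed.
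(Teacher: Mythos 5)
Your proposal is correct and follows essentially the same route as the paper: pass to the limit in the deterministic terms by pairing strongly $L^1$-convergent coefficients with the weak-$\star$ limit of $\chi_n$, and handle the stochastic integral by verifying the hypotheses of Theorem \ref{thm:main_x1} via the uniform $L^1$ time-translation estimate from Lemma \ref{thm:timetranslation} applied to the kinetic equation tested against $\pd_\xi(\varphi\sigma_n)$, with the $W^{3,1}_\loc$ hypothesis supplying the uniform $C^2_\loc$ control of this $n$-dependent test function. The only (immaterial) difference is that the paper casts the It\^o correction by moving all three $\xi$-derivatives onto the test function, whereas you keep $\pd_\xi\chi_n$ as a Dirac measure coefficient; both fit the measure-valued framework of Lemma \ref{thm:timetranslation} and consume the same regularity budget.
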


By $m_n([0,t])$, we mean $\int_{r = 0}^t m_n(\d r,\d x,\d \xi)$, 
which remains a measure in the space $L^1_\omega (M_b^+)_{x,\xi}$ 
as $m_n$ admits a disintegration \cite[Theorem 1.10]{evans:1990}. 

\begin{proof}
Let $\varphi \in C^\infty_c(\T^d \times \R)$ so that 
$\supp(\varphi) \subset K \Subset \T^d \times \R$. 
Using the definition of weak solutions to  
\eqref{eq:example_spde2} tested against the 
$\varphi$, we see that each term of \eqref{eq:example_spde2} 
tends weakly in $(\omega,t)$ to the appropriate limit by the 
assumptions of the theorem. It remains then 
to argue that the stochastic integral converges, 
whereupon we can invoke Lemma \ref{thm:wkcv_ae} 
to conclude.

Since $\chi_n$ converges only weakly-$\star$ 
in $L^\infty_{\omega, t, x,\xi}$, we will 
apply Theorem \ref{thm:main_x1} to get weak 
convergence of the stochastic integral in $L^2(\Omega)$: 
\begin{align}\label{eq:cl_itoconverge}
	 \int_0^t  \int_{\R}\int_{\T^d}
 	 \pd_{\xi}\bk{\varphi \sigma_n} \chi_n\,\d x \,\d \xi \,\d W_n
	\tonweak   \int_0^t  \int_{\R}\int_{\T^d}
 	 \pd_{\xi}\bk{\varphi \sigma} \chi\,\d x \,\d \xi \,\d W.
\end{align}
By $W^{3,1}_\loc \hookrightarrow L^\infty_\loc$
and the assumption on $\sigma_n$,
and the $L^\infty_{\omega, t, x, \xi}$ bound on 
$\chi_n$, we find
\begin{equation}\label{eq:p_energybound5}
\begin{aligned}
	&\Ex \norm{\pd_{\xi}\bk{\varphi \sigma_n} 
	\chi_n}_{L^p( [0,T] ;L^2(\T^d \times \R;\R^k))}^p
	\lesssim 1.
\end{aligned}
\end{equation}
The time translation estimate will follow 
from Lemma \ref{thm:timetranslation}, which  
can be easily modified to accommodate 
integration on $\T^d \times \R$ instead of $\T^d$, 
against $\xi$-compactly supported test functions. 
Integrating  \eqref{eq:example_spde2} against 
$\psi \pd_{\xi}\bk{\varphi \sigma_n}$, where 
$\psi \in C^2_c(\T^d \times \R)$, set
\begin{align*}
	I_1(n) &: = 
	\nabla_x \otimes \big( \psi\pd_{\xi}\bk{\varphi \sigma_n}   \big) 
	 F'_n(\xi) \chi_n, \quad
	I_2(n) : = 
	 \pd_\xi \bk{\psi \pd_{\xi}\bk{\varphi \sigma_n} }
	 \, m_n, \\ 
	I_3(n) &: = 
	\pd_\xi \bk{\abs{\sigma_n}^2\, \pd_{\xi}
	\bk{\psi\, \pd_{\xi}\bk{\varphi \sigma_n} }}
	 \chi_n, \quad 
	I_4(n) : = 
	\pd_\xi \big({ \sigma_n\otimes\psi\, \pd_{\xi}\bk{\varphi \sigma_n} }\big)
	 \chi_n. 
\end{align*}
As in \eqref{eq:ste_translation_1} -- \eqref{eq:ste_translation_5}, 
we can estimate these terms separately. We have 
\begin{align*}
	\norm{I_1(n)}_{L^1_{\omega, t, x, \xi}} 
	\le T \norm{\nabla  \otimes\bk{\psi\,\pd_\xi
	\bk{\varphi \sigma_n}}}_{L^\infty(\T^d \times \R)} 
	\norm{F'_n}_{L^1(K)} \lesssim 1,
\end{align*}
since $\sigma_n \in W^{3,1}_\loc(\R;\R^k) 
	\hookrightarrow W^{1,\infty}_\loc(\R;\R^k)$, 
and $F_n \to F$ in $W^{1,1}_\loc(\R;\R^d)$. Similarly,
\begin{align*}
\norm{I_2(n)}_{L^1_\omega {TV}_{\xi,t,x}} 
	\le  \norm{\pd_\xi \bk{\psi \pd_{\xi}(\varphi \sigma_n)}}_{
	L^\infty_{t,x,\xi}}
	\Ex \,m_n( [0,T] \times \T^d \times \R )\lesssim 1.
\end{align*}
Next, using that $0\le \chi_n\le 1$,
\begin{align*}
	\norm{I_3(n)}_{L^1_{\omega, t, x, \xi}} 
	&\le
	\norm{\pd_\xi \bk{ \abs{\sigma_n}^2\, \pd_{\xi}
	\bk{\psi\, \pd_{\xi}\bk{\varphi \sigma_n} }}}_{L^1(K)}
	\lesssim 1.
\end{align*}
Finally, we have 
\begin{align*}
	&\norm{I_4}_{L^1_{\omega, t}L^2_{x,\xi}} 
	 \le  T \norm{\pd_\xi \big({ \sigma_n \otimes \psi\, \pd_{\xi}
	\bk{\varphi \sigma_n} }\big)}_{L^1_{x,\xi}} \lesssim 1.
\end{align*}

The bounds above are $j$-independent. Therefore, 
Lemma \ref{thm:timetranslation} gives us
\begin{align*}
	\Ex\int_h^T\abs{ \int_{\R}\int_{\T^d} \psi
	\pd_\xi\bk{\varphi \sigma_n}
	 \big({\chi_n(t)  - \chi_n(t - h)}\big)\,\d x \,\d \xi} \,\d t
	& \lesssim_{T,\psi} h^{1/2},
\end{align*}
uniformly in $n$. Along with \eqref{eq:p_energybound5} 
and the weak $L^p_{\omega, t, x, \xi}$ convergence of $\chi_n$, 
Theorem \ref{thm:main_x1} allows us to conclude 
that the stochastic integral term converges \eqref{eq:cl_itoconverge}.

It then follows that $\chi$ is a weak solution 
to the limiting equation \eqref{eq:conservation3}.
\end{proof}

\section*{Acknowledgements}
{This reseach was supported in part by the 
Research Council of Norway project {\em INICE} (301538).}

\end{document}